\numberwithin{equation}{section}
\newtheorem{theorem}{Theorem}[section]
\newtheorem{proposition}[theorem]{Proposition}
\newtheorem{lemma}[theorem]{Lemma}
\theoremstyle{definition}
\newtheorem{remark}[theorem]{Remark}
\newtheorem{example}[theorem]{Example}
\newtheorem{definition}[theorem]{Definition}
\newcommand{\prinA}{\mathcal{A}_{\bullet}}
\def\aa{\mathbf{a}}
\def\bb{\mathbf{b}}
\def\cc{\mathbf{c}}
\def\dd{\mathbf{d}}
\def\ee{\mathbf{e}}
\def\gg{\mathbf{g}}
\def\vv{\mathbf{v}}
\def\ww{\mathbf{w}}
\def\xx{\mathbf{x}}
\def\ZZ{\mathbb{Z}}
\def\PP{\mathbb{P}}
\def\QQ{\mathbb{Q}}
\def\Xcal{\mathcal{X}}
\def\Trop{\operatorname{Trop}}
\def\sgn{\operatorname{sgn}}
\renewcommand{\eqref}[1]{{\rm (\ref{#1})}}
\begin{document}


\title[Quantum F-polynomials in Classical Types]
{Quantum F-polynomials in Classical Types}

\author{Thao Tran}
\address{Department of Mathematics, Northeastern University,
Boston, MA 02115, USA} \email{tran.thao1@neu.edu}

\subjclass[2000]{Primary
16S99, 
Secondary
05E15, 
20G42
}

\begin{abstract}  In their ``Cluster Algebras IV" paper, Fomin and Zelevinsky defined $F$-polynomials and $\gg$-vectors, and they showed that the cluster variables in any cluster algebra can be expressed in a formula involving the appropriate $F$-polynomial and $\gg$-vector. In ``$F$-polynomials in Quantum Cluster Algebras," the predecessor to this paper, we defined and proved the existence of quantum $F$-polynomials, which are analogs of $F$-polynomials in quantum cluster algebras in the sense that cluster variables in any quantum cluster algebra can be expressed in a similar formula in terms of quantum $F$-polynomials and $\gg$-vectors.   In this paper, we give formulas for both $F$-polynomials and quantum $F$-polynomials for cluster algebras of classical type when the initial exchange matrix is acyclic. 

\end{abstract}



\maketitle

\tableofcontents

\section{Introduction}

Cluster algebras were defined by Fomin and Zelevinsky in \cite{ca1}.  Since then, connections have been found in a variety of areas including algebraic combinatorics, quiver representations, Poisson geometry, and others.  Let $n \in \ZZ, n\geq 1$.  Roughly speaking, a \emph{cluster algebra} is a commutative algebra which is generated by a distinguished set of generators called \emph{cluster variables}.  To obtain these cluster variables, one starts with an initial \emph{seed}, which contains $n$ cluster variables as well as an \emph{exchange matrix}.  Using the entries from the exchange matrix, a process called \emph{mutation} is performed on the initial seed, which yields new seeds containing new cluster variables and exchange matrices.  Finally, the mutation process is iterated on the resulting seeds.  The total set of resulting cluster variables generates the cluster algebra.   \emph{Quantum cluster algebras}, which were defined by Berenstein and Zelevinsky in \cite{quantum}, are certain noncommutative deformations of cluster algebras.  

In \cite{coefficients}, Fomin and Zelevinsky defined $F$-polynomials and $\gg$-vectors. They gave a formula for cluster variables in terms of the corresponding $F$-polynomials and $\gg$-vectors which only depends on the data from the initial seed.  As a consequence, once the appropriate $F$-polynomial and $\gg$-vector are computed, a cluster variable may be expressed as a Laurent polynomial in the initial cluster variables.  

A (quantum) cluster algebra is of \emph{finite type} if the total set of cluster variables is finite.  In \cite{ca2}, it was proven that  finite type cluster algebras are classified by the same Cartan-Killing types as semisimple Lie algebras or finite root systems.    It was proven in \cite{quantum} that an identical classification also holds for quantum cluster algebras of finite type.  In particular, finite type cluster algebras include those of classical types $\mbox{A}_{n}$, $\mbox{B}_{n}$, $\mbox{C}_{n}$, and $\mbox{D}_{n}$.

In this paper, formulas for $F$-polynomials are given in classical types for the case where the initial matrix is acyclic (Theorem \ref{thm:fpoly-classical}).     Different formulas were given in the cases where the initial matrix is bipartite \cite{coefficients}, acyclic \cite{yangzel}, and in general  in \cite{mw}.   Also, a formula was given in \cite{msw} for $F$-polynomials and $\gg$-vectors for cluster algebras arising from surfaces with marked points as defined in \cite{triangulated}, and from this formula, one may compute the $F$-polynomials corresponding to type $\mbox{A}_{n}$ and $\mbox{D}_{n}$.     Theorem \ref{thm:fpoly-classical} differs from these other formulas in the statement of the formula, the methods for proving it, or both.  The formula for the finite type $F$-polynomials in this paper gives a combinatorial recipe for determining which monomials occur with nonzero coefficient and what the coefficient of the monomial is in that case.   The formula for type $\mbox{A}_{n}$ was already proven in \cite{qps2} using a formula for $F$-polynomials given in terms of quiver representations.  To prove the result in type $\mbox{D}_{n}$ here, the same formula is used.  To finish the proof in the other types, we show that the $F$-polynomials can be obtained as certain ``projections" of $F$-polynomials from type $\mbox{A}$ and $\mbox{D}$.

In \cite{quantumfpoly}, the predecessor to this paper, \emph{quantum $F$-polynomials} were defined; these are analogs to $F$-polynomials in that there exists a similar formula for computing cluster variables in quantum cluster algebras in terms of quantum $F$-polynomials and $\gg$-vectors.  Formulas for certain quantum $F$-polynomials were given in \cite{quantumfpoly}; in particular, this included all quantum $F$-polynomials of type $\mbox{A}_{n}$.  In this paper, Theorem \ref{thm:quantum-classical} gives a formula for quantum $F$-polynomials for the classical types when the initial matrix is acyclic.

\section{Cluster Algebras of Geometric Type} \label{section:ca}
Following \cite[Section 2]{coefficients}, we give the definition of a cluster algebra of geometric type as well as some properties of these cluster algebras.  The proofs of any statements given in this section can be found in \cite[Section 2]{coefficients}.
\begin{definition} \label{def:semifield-tropical}
Let $J$ be a finite set of labels,
and let $\Trop (u_j: j \in J)$ be an abelian group (written
multiplicatively) freely generated by the elements $u_j \, (j \in J)$.
We define the addition~$\oplus$ in $\Trop (u_j: j \in J)$ by
\begin{equation}
\label{eq:tropical-addition}
\prod_j u_j^{a_j} \oplus \prod_j u_j^{b_j} =
\prod_j u_j^{\min (a_j, b_j)} \,,
\end{equation}
and call $(\Trop (u_j: j \in J),\oplus,\cdot)$ a \emph{tropical
 semifield}.   If $J$ is empty,  we obtain the trivial semifield consisting of a single element~$1$.   The group ring of $\Trop (u_j: j \in J)$ is the ring of Laurent polynomials in the variables~$u_j\,$.
\end{definition}

Fix two positive integers $m$, $n$ with $m \geq n$.  Let $\mathbb{P} = \Trop (x_{n + 1}, \ldots, x_{m})$, and let $\mathcal{F}$ be the field of rational functions in $n$ independent variables with coefficients in $\QQ\PP$, the field of fractions of the integral group ring $\ZZ\PP$.  (Note that the definition of $\mathcal{F}$ does not depend on the auxiliary addition $\oplus$ in $\mathbb{P}$.)  The group ring $\ZZ\mathbb{P}$ will be the ground ring for the cluster algebra $\mathcal{A}$ to be defined, and $\mathcal{F}$ will be the ambient field, with $n$ being the \emph{rank} of $\mathcal{A}$.

\begin{definition} \label{def:labeled-seed}
A \emph{labeled seed} in $\mathcal{F}$ is a pair $(\tilde{\xx}, \tilde{B})$ where
\begin{itemize}
  \item $\tilde{\xx} = (x_{1}, \ldots, x_{m})$, where $x_{1}, \ldots, x_{n}$ are algebraically independent over $\QQ\PP$, and $\mathcal{F} = \QQ\PP(x_{1}, \ldots, x_{n})$, and 
  \item $\tilde{B}$ is an $m \times n$ integer matrix such that the submatrix $B$ consisting of the top $n$ rows and columns of $\tilde{B}$ is skew-symmetrizable (i.e., $DB$ is skew-symmetric for some $n \times n$ diagonal matrix $D$ with positive integer diagonal entries).
\end{itemize}
We call $\tilde{\xx}$ the \emph{extended cluster} of the labeled seed, $(x_{1}, \ldots, x_{n})$ the \emph{cluster}, $\tilde{B}$ the \emph{exchange matrix}, and the matrix $B$ the \emph{principal part} of $\tilde{B}$.
\end{definition}
We fix some notation to be used throughout the paper.  For $x \in \QQ$,  
\begin{align*}
[x]_+ &= \max(x,0); \\
\sgn(x) &=
\begin{cases}
-1 & \text{if $x<0$;}\\
0  & \text{if $x=0$;}\\
 1 & \text{if $x>0$;}
\end{cases}\\
\end{align*}
Also, for $i, j \in \ZZ$, write $[i, j]$ for the set $\{ k \in \ZZ : i \leq k \leq j \}$.  In particular, $[i, j] = \varnothing$ if $i > j$.

\begin{definition} \label{def:matrix-mut} Let $k \in [1, n]$.  We say that an $m \times n$ matrix $\tilde{B}'$ is obtained from an $m \times n$ matrix $\tilde{B} = (b_{ij})$ by \emph{matrix mutation} in direction $k$ if the entries of $\tilde{B}'$ are given by 
\begin{equation}
\label{eq:matrix-mutation}
b'_{ij} =
\begin{cases}
-b_{ij} & \text{if $i=k$ or $j=k$;} \\[.05in]
b_{ij} + \sgn(b_{ik}) \ [b_{ik}b_{kj}]_+
 & \text{otherwise.}
\end{cases}
\end{equation}
\end{definition} 

\begin{definition} \label{def:seed-mut} Let $(\tilde{\xx}, \tilde{B})$ be a labeled seed in $\mathcal{F}$ as in Definition \ref{def:labeled-seed}, and write $\tilde{B} = (b_{ij})$.  The \emph{seed mutation} $\mu_{k}$ in direction $k$ transforms $(\tilde{\xx}, \tilde{B})$ into the labeled seed $\mu_{k}(\tilde{\xx}, \tilde{B}) = (\tilde{\xx}', \tilde{B}')$, where 
\begin{itemize}
   \item $\tilde{\xx}'=(x_1',\dots,x_m')$, where $x'_{j} = x_{j}$ for $j \neq k$, and 
      \begin{equation}
\label{eq:exchange-relation}
x'_k = x_k^{-1}\left(
\displaystyle\prod_{i=1}^m x_i^{[b_{ik}]_+}
+ \displaystyle\prod_{i=1}^m x_i^{[-b_{ik}]_+}\right) ,
\end{equation}
   \item $\tilde{B}'$ is obtained from $\tilde{B}$ by matrix mutation in direction $k$.
\end{itemize}
\end{definition}
One may check that the pair $(\tilde{\xx}', \tilde{B}')$ obtained is again a labeled seed.  Furthermore, the seed mutation $\mu_{k}$ is involutive, i.e., applying $\mu_{k}$ to $(\tilde{\xx}, \tilde{B})$ yields the original labeled seed $(\tilde{\xx}, \tilde{B})$.

\begin{definition} Let $\mathbb{T}_{n}$ be the $n$-regular tree whose edges are labeled with $1, \ldots, n$ in such a way that for each vertex, the $n$ edges emanating from that vertex each receive different labels.  Write $t \frac{k}{\hspace{1cm}} t'$ to indicate $t, t' \in \mathbb{T}_{n}$ are joined by an edge with label $k$.
\end{definition}

\begin{definition} \label{def:cluster-pattern} A \emph{cluster pattern} is an assignment of a labeled seed $(\tilde{\xx}_{t}, \tilde{B}_{t})$ to every vertex $t \in \mathbb{T}_{n}$ such that if $t \frac{k}{\hspace{1cm}} t'$, then the labeled seeds assigned to $t$, $t'$ may be obtained from one another by seed mutation in direction $k$.   Write $\tilde{\xx}_{t}  = (x_{1; t}, \ldots, x_{m; t})$, $\tilde{B}_{t}  =  (b^t _{ij})$, and denote by $B_{t}$ the principal part of $\tilde{B}_{t}$.  
\end{definition}

\begin{definition} For a given cluster pattern, write 
\begin{equation}
\Xcal = \{ x_{j; t} : j \in [1, n], t \in \mathbb{T}_{n} \}.
\end{equation}
The elements of $\Xcal$ are the \emph{cluster variables}.  The \emph{cluster algebra} $\mathcal{A}$ associated to this cluster pattern is the $\ZZ\PP$-subalgebra of $\mathcal{F}$ generated by all cluster variables.  That is, $\mathcal{A} = \ZZ\PP[\Xcal]$.
\end{definition}

\section{$F$-polynomials and $\gg$-vectors} \label{section:f-polys}

For this section, fix an $n \times n$ skew-symmetrizable integer matrix $B^{0} = (b^{0}_{ij})$ and an initial vertex $t_{0} \in \mathbb{T}_{n}$.   The information in this section can be found in \cite{coefficients}.

\begin{definition} We say that a cluster pattern $t \mapsto (\tilde{\xx}_{t}, \tilde{B}_{t})$ (or its corresponding cluster algebra) has \emph{principal coefficients} at $t_{0}$ if $\tilde{\xx}_{t_{0}} = (x_{1}, \ldots, x_{2n})$ (i.e., $m = 2n$) and the exchange matrix at $t_{0}$ is the \emph{principal matrix} corresponding to $B^{0}$ given by  
\begin{equation}
\tilde{B}^{t_{0}} = \left( \begin{array}{c} B^{0}  \\ I
                             \end{array} \right)
\end{equation}
where $I$ is the $n \times n$ identity matrix.  Denote the corresponding cluster algebra by $\mathcal{A}_{\bullet} = \mathcal{A}_{\bullet}(B^{0}, t_{0})$. 
\end{definition}

Using the initial extended cluster of $\mathcal{A}_{\bullet}$, define
\begin{eqnarray} \label{def:yhat}
 \hat{y}_{j} = x_{n + j} \prod_{i = 1}^{n} x_{i}^{b^{0}_{ij}} \hspace{0.5cm}
\end{eqnarray}
for $j \in [1, n]$.

\begin{definition}  A polynomial $F \in \ZZ[u_{1}, \ldots, u_{n}]$ is \emph{primitive} if no $u_{i}$ divides $F$ for $i \in [1, n]$.
\end{definition}

\begin{proposition} \label{thm:cluster-var-formula} Let $\ell \in [1, n]$, $t \in \mathbb{T}_{n}$.  There exists a unique primitive polynomial 
\begin{eqnarray}
F_{\ell; t} = F_{\ell; t}^{B^{0}; t_{0}} \in \ZZ[u_{1}, \ldots, u_{n}]
\end{eqnarray}
and a unique vector $\gg_{\ell; t} = \gg_{\ell; t}^{B^{0}; t_{0}} = (g_{1}, \ldots, g_{n}) \in \ZZ^{n}$ such that  the cluster variable $x_{\ell; t} \in  \prinA(B^{0}, t_{0})$ is given by the equation
\begin{eqnarray} \label{eq:cluster-var-fpoly}
x_{\ell; t} = F^{B^{0}; t_{0}}_{\ell; t}(\hat{y}_{1}, \ldots, \hat{y}_{n})x_{1}^{g_{1}}\cdots x_{n}^{g_{n}}.
\end{eqnarray}
\end{proposition}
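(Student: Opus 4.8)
The plan is to follow the approach of \cite{coefficients}, where this is Corollary~6.3: the two ingredients are the Laurent phenomenon for cluster algebras with principal coefficients and a natural $\ZZ^{n}$-grading of $\prinA = \prinA(B^{0}, t_{0})$ under which every cluster variable becomes homogeneous. First I would recall the sharp form of the Laurent phenomenon valid for $\prinA$ (proved in \cite{coefficients}): for all $\ell \in [1,n]$ and $t \in \TT_{n}$, the cluster variable $x_{\ell; t}$ lies in $\ZZ[x_{n+1}, \ldots, x_{2n}][x_{1}^{\pm 1}, \ldots, x_{n}^{\pm 1}]$, that is, $x_{\ell;t}$ is an honest polynomial in the coefficient variables $x_{n+1}, \ldots, x_{2n}$ and a Laurent polynomial in $x_{1}, \ldots, x_{n}$.

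Next I would equip $\prinA$ with the $\ZZ^{n}$-grading determined on the initial extended cluster by $\deg(x_{i}) = \ee_{i}$ for $i \in [1,n]$ and $\deg(x_{n+j}) = -\bb^{0}_{j}$, where $\bb^{0}_{j} = (b^{0}_{1j}, \ldots, b^{0}_{nj})$ denotes the $j$-th column of $B^{0}$; then $\deg(\hat y_{j}) = -\bb^{0}_{j} + \sum_{i=1}^{n} b^{0}_{ij}\ee_{i} = 0$, so each $\hat y_{j}$ is homogeneous of degree $0$. The crucial step -- and the one I expect to be the \emph{main obstacle} -- is to verify that this grading is in fact well defined on all of $\prinA$: one must show that every exchange relation $x_{k}x_{k}' = \prod_{i=1}^{2n} x_{i}^{[b_{ik}]_{+}} + \prod_{i=1}^{2n} x_{i}^{[-b_{ik}]_{+}}$ arising along $\TT_{n}$ is homogeneous, i.e.\ that its two monomial terms carry the same degree, so that $x_{k}'$ inherits a well-defined degree from $x_{k}$ independent of the path chosen in $\TT_{n}$ (the degrees transform under mutation by an explicit piecewise-linear recursion). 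This is where the special ``identity block'' shape of $\tilde{B}^{t_{0}}$ is used, together with the fact -- immediate from Definition \ref{def:matrix-mut} -- that matrix mutation preserves the relevant structural features of the exchange matrices. Granting homogeneity, each cluster variable $x_{\ell; t}$ is homogeneous of some degree, which we call $\gg_{\ell; t} = (g_{1}, \ldots, g_{n}) \in \ZZ^{n}$.

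Given this, I would extract the $F$-polynomial as follows. Set $X = x_{\ell; t}\,x_{1}^{-g_{1}} \cdots x_{n}^{-g_{n}}$, which is homogeneous of degree $0$ and, by the Laurent phenomenon, lies in $\ZZ[x_{n+1}, \ldots, x_{2n}][x_{1}^{\pm 1}, \ldots, x_{n}^{\pm 1}]$. Writing $X = \sum_{\aa} c_{\aa}\,x_{n+1}^{a_{1}} \cdots x_{2n}^{a_{n}}$ with $c_{\aa} \in \ZZ[x_{1}^{\pm 1}, \ldots, x_{n}^{\pm 1}]$, homogeneity forces $c_{\aa}$ to be homogeneous of degree $\sum_{j} a_{j}\bb^{0}_{j}$ for the standard $\ZZ^{n}$-grading of $\ZZ[x_{1}^{\pm 1}, \ldots, x_{n}^{\pm 1}]$; since that ring decomposes as $\bigoplus_{\dd \in \ZZ^{n}} \ZZ\cdot x_{1}^{d_{1}} \cdots x_{n}^{d_{n}}$ into one-dimensional graded pieces, $c_{\aa} = \lambda_{\aa}\prod_{i=1}^{n} x_{i}^{\sum_{j} a_{j} b^{0}_{ij}}$ for a unique $\lambda_{\aa} \in \ZZ$. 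Regrouping, $X = \sum_{\aa} \lambda_{\aa}\prod_{j=1}^{n}\bigl(x_{n+j}\,\textstyle\prod_{i} x_{i}^{b^{0}_{ij}}\bigr)^{a_{j}} = \sum_{\aa}\lambda_{\aa}\prod_{j=1}^{n}\hat y_{j}^{a_{j}} = F(\hat y_{1}, \ldots, \hat y_{n})$, where $F = \sum_{\aa} \lambda_{\aa}\,u_{1}^{a_{1}} \cdots u_{n}^{a_{n}} \in \ZZ[u_{1}, \ldots, u_{n}]$. This is exactly \eqref{eq:cluster-var-fpoly}.

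It remains to prove primitivity and uniqueness. For primitivity, note that $F$ fails to be divisible by $u_{i}$ for every $i$ precisely when $F$ evaluated in the tropical semifield $\Trop(u_{1}, \ldots, u_{n})$ equals $1$; I would establish this last fact by induction on the distance from $t_{0}$ in $\TT_{n}$, tropicalizing the recursion that the polynomials $F_{\ell; t}$ inherit from the exchange relations (the inductive step reduces to the identity $u^{[\cc]_{+}} \oplus u^{[-\cc]_{+}} = 1$ for a $c$-vector $\cc$), as carried out in \cite{coefficients}. For uniqueness, suppose $F(\hat y_{1}, \ldots, \hat y_{n})\,x_{1}^{g_{1}} \cdots x_{n}^{g_{n}} = F'(\hat y_{1}, \ldots, \hat y_{n})\,x_{1}^{g'_{1}} \cdots x_{n}^{g'_{n}}$ with $F, F' \in \ZZ[u_{1}, \ldots, u_{n}] \setminus \{0\}$; specializing $x_{1} = \cdots = x_{n} = 1$ (which sends $\hat y_{j} \mapsto x_{n+j}$) gives $F(x_{n+1}, \ldots, x_{2n}) = F'(x_{n+1}, \ldots, x_{2n})$, hence $F = F'$ since the $x_{n+j}$ are algebraically independent over $\ZZ$, and then cancelling the nonzero factor $F$ gives $x_{1}^{g_{1}} \cdots x_{n}^{g_{n}} = x_{1}^{g'_{1}} \cdots x_{n}^{g'_{n}}$, so $\gg_{\ell; t}$ is unique as well.
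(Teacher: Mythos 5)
Your proposal is correct and takes essentially the same route as the paper, whose proof simply invokes Corollary 6.3, equation (5.5), and Proposition 7.8 of \cite{coefficients}: your $\ZZ^{n}$-grading with $\deg(\hat y_{j})=0$, the extraction of $F$ from the degree-zero Laurent expansion, and the tropical argument for primitivity together with the specialization $x_{1}=\cdots=x_{n}=1$ for uniqueness are precisely the content of those cited results. Nothing further is needed.
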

\begin{proof}
The proposition follows immediately from Corollary 6.3, equation (5.5), and Proposition 7.8 of \cite{coefficients}.
\end{proof}

The polynomial $F_{\ell; t}$ given in Proposition \ref{thm:cluster-var-formula} is called an \emph{$F$-polynomial}, and $\gg_{\ell; t}$ is called a \emph{$\gg$-vector}.

\section{$F$-polynomials  in Classical Types} \label{fpoly-classical}

Let $B^{0} = (b^{0}_{ij})$ be an acyclic $n \times n$ exchange matrix of type $\mbox{A}_{n}$, $\mbox{B}_{n}$, $\mbox{C}_{n}$, or $\mbox{D}_{n}$.  In particular, $b_{ij} = \pm a_{ij}$ for $i \neq j$, where $(a_{ij})$ is the Cartan matrix of the corresponding type.    We will use the convention for Cartan matrices given in \cite{kac} which is different from the one given in \cite{bourbaki}.

We recall from  \cite{ca1} the next definition.  Let $\mathcal{A}$ be any cluster algebra whose initial exchange matrix has principal part $B^{0}$.  Suppose $\mathcal{A}$ has initial extended cluster $(x_{1}, \ldots, x_{m})$.  By the Laurent phenomenon (\cite[Theorem~3.1]{ca1}), any cluster variable $x_{j; t} \in \mathcal{A}$ may be expressed as 
\begin{eqnarray}
x_{j; t} = \displaystyle \frac{N(x_{1}, \ldots, x_{n})}{x_{1}^{d_{1}} \ldots x_{n}^{d_{n}}  },
\end{eqnarray}
where $N(x_{1}, \ldots, x_{n})$ is a polynomial with coefficients in $\ZZ[x^{\pm 1}_{n + 1}, \ldots, x^{\pm 1}_{m}]$ which is not divisible by any $x_{i}$.  Let $\dd_{j; t}^{B^{0}; t_{0}} = \left[   \begin{array}{c} d_{1} \\ \vdots \\ d_{n}   \end{array}  \right]$, and call this the \emph{denominator vector} of the cluster variable $x_{j; t}$.    One may show that $\dd_{j; t}$ does not depend on the choice of coefficients.

Denote by $\Phi_{+} = \Phi_{+}(B^{0})$ the set of all denominator vectors of cluster variables in $\prinA(B^{0}, t_{0})$ which do not occur in the initial cluster.  It was proven in \cite{yangzel} that these cluster variables are in bijective correspondence with the positive roots corresponding to the type of $B^{0}$, and this correspondence is via denominator vectors.   To be more precise, if a cluster variable corresponds to $\alpha = \sum d_{i} \alpha_{i}$, where the simple roots are given by $\alpha_{1}, \ldots, \alpha_{n}$, then the denominator vector of the cluster variable is $(d_{1}, \ldots, d_{n}) \in \ZZ^{n}$.    

Let $\ee_{i}$ be the $i$th elementary vector in $\ZZ^{n}$.  The sets $\Phi_{+}$ are given in each type by the following:

\

\textbf{Type $\mbox{A}_{n}$}: 
$\Phi_{+} = \{ \ee_{i} + \ldots + \ee_{j} : 1 \leq i \leq j \leq n \}$

\textbf{Type $\mbox{B}_{n}$:} 
$\Phi_{+} = \{  \ee_{i} + \ldots + \ee_{j} : 1 \leq i \leq j \leq n \}  \cup \{ \ee_{i} + \ldots + \ee_{j - 1} + 2\ee_{j} + \ldots + 2\ee_{n} : 1 \leq i < j \leq n \}$

\textbf{Type $\mbox{C}_{n}$}:
$\Phi_{+} = \{ \ee_{i} + \ldots + \ee_{j} : 1 \leq i \leq j \leq n \} \cup
\{ \ee_{i} + \ldots + 2\ee_{j} + \cdots + 2\ee_{n - 1} +  \ee_{n} : 1 \leq i < j < n \} \cup \{ 2\ee_{i} + \cdots + 2\ee_{n - 1} + \ee_{n} : 1 \leq i \leq n - 1\}$

\textbf{Type $\mbox{D}_{n}$:} 
$\Phi_{+} = \{ \ee_{i} + \ldots + \ee_{j} : 1 \leq i \leq j \leq n \} \cup
\{ \ee_{i} + \ldots + \ee_{n - 2} + \ee_{n} : 1 \leq i \leq n - 2 \} \cup
\{ \ee_{i} + \ldots + \ee_{j - 1} + 2\ee_{j} + \ldots + 2\ee_{n - 2} + \ee_{n - 1} + \ee_{n} : 1 \leq i < j \leq n - 2 \}
$

\

Let $F_{\dd} = F_{\dd}^{B^{0}; t_{0}}$ be the $F$-polynomial corresponding to the cluster variable with denominator vector $\dd$.  

For any $n \times n$ skew-symmetrizable matrix $B$, let $Q(B)$ be the quiver with vertices $[1, n]$ and an arrow from $i \rightarrow j$ whenever $b_{ji} > 0$.  The matrix $B$ is acyclic if $Q(B)$ is acyclic (i.e., it doesn't contain any directed cycles).  Write $Q^{0} = Q(B^{0})$.  

Define a partial order $\geq$ on $\ZZ^{n}$ by 
\begin{eqnarray} 
\aa \geq \aa' \mbox{ if } \aa - \aa' \in \ZZ^{n}_{\geq 0}.
\end{eqnarray} 

Let $\dd = (d_{1}, \ldots, d_{n}) \in \Phi_{+}(B^{0})$ and $\textbf{e} = (e_1, \ldots, e_n) \in \mathbb{Z}^{n}$ such that $0 \leq \ee \leq \dd$. 

\begin{definition} \label{def:acceptable} We call an arrow $i \to j$ in $Q^0$ \emph{acceptable} (with respect to the pair $(\dd, \ee)$) if $e_i - e_j
\leq [d_i - d_j]_+$.  Also, an arrow $i \to j$ is called \emph{critical} (with respect to $(\dd, \ee)$) if either
$$(d_i, e_i) = (2,1), \quad (d_j,e_j) = (1,0),$$
or
$$(d_j, e_j) = (2,1), \quad (d_i,e_i) = (1,1).$$
\end{definition}

Note that a critical arrow is always acceptable.   Let $S$ be the induced subgraph of $Q^0$ on the set of vertices $\{i: (d_i,e_i) = (2,1)\}$.   For a connected component $C$ of $S$, define $\nu(C)$ as the number of critical arrows having a vertex in $C$.

\begin{theorem} \label{thm:fpoly-classical} Fix $\dd = (d_{1}, \ldots, d_{n}) \in \Phi_{+}(B^{0})$, $\ee = (e_{1}, \ldots, e_{n}) \in \ZZ^{n}$, and define $S$ as above. The coefficient of the monomial $u_{1}^{e_{1}}\ldots u_{n}^{e_{n}}$ in $F_{\dd}$ is nonzero if and only if 
\begin{enumerate}
 \item $0 \leq \ee \leq \dd$;
 \item all arrows in $Q^{0}$ are acceptable;
 \item $\nu(C) \leq 1$ for all components $C$ of $S$;
 \item if $B^{0}$ is of type $\mbox{C}_{n}$, then 
  \begin{enumerate} 
     \item $e_{n} = 1$, $d_{n - 1} = 2$, and  $n \rightarrow n - 1$  in $Q^{0}$ implies that $e_{n - 1} = 2$;
     \item $e_{n - 1} \geq 1$, $d_{n} = 1$, and $n - 1 \rightarrow n$ in $Q^{0}$ imply that $e_{n} = 1$.
  \end{enumerate}
  \item If $B^{0}$ is of type $\mbox{B}_{n}$, $S$ consists of a single component which contains the vertex $n - 1$, and there is a critical arrow in $S$, then $S$ does not contain the vertex $n$.
\end{enumerate}
If the conditions above are all satisfied, then the coefficient of $u_{1}^{e_{1}}\ldots u_{n}^{e_{n}}$ is $2^{c}$, where $c$ is the number of components $C$ such that $\nu(C) = 0$.
\end{theorem}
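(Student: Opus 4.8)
\emph{The plan.} I would prove the four classical types in the order $\mbox{A}_{n}$, $\mbox{D}_{n}$, $\mbox{B}_{n}$, $\mbox{C}_{n}$, deducing the last two from the first two by folding. Throughout, the main tool is the quiver--Grassmannian formula for $F$-polynomials of acyclic cluster algebras (used in \cite{qps2} in type $\mbox{A}_{n}$): if $M_{\dd}$ is the indecomposable representation of $Q^{0}$ whose dimension vector is the positive root $\dd$, then
\begin{equation*}
F_{\dd} \;=\; \sum_{\ee}\, \chi\bigl(\mathrm{Gr}_{\ee}(M_{\dd})\bigr)\, u_{1}^{e_{1}}\cdots u_{n}^{e_{n}},
\end{equation*}
where $\mathrm{Gr}_{\ee}(M_{\dd})$ is the variety of subrepresentations of $M_{\dd}$ of dimension vector $\ee$ and $\chi$ is the Euler characteristic. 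Hence the coefficient of $u_{1}^{e_{1}}\cdots u_{n}^{e_{n}}$ in $F_{\dd}$ equals $\chi(\mathrm{Gr}_{\ee}(M_{\dd}))$, and the theorem becomes the claim that this Grassmannian is empty unless conditions (1)--(5) hold and otherwise has Euler characteristic $2^{c}$. Type $\mbox{A}_{n}$ is \cite{qps2}: there $M_{\dd}$ is thin, $\mathrm{Gr}_{\ee}(M_{\dd})$ is a single point when $0\le\ee\le\dd$ and every arrow is acceptable, and empty otherwise, with (3)--(5) vacuous because $S=\varnothing$. I take this as the base case.

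\emph{Type $\mbox{D}_{n}$.} For an arbitrary acyclic orientation $Q^{0}$ of the $\mbox{D}_{n}$ diagram, I would first read $M_{\dd}$ off the explicit description of $\Phi_{+}$: when $\dd$ is not in the family carrying a coefficient $2$, $M_{\dd}$ is thin and the argument copies the type-$\mbox{A}$ one; when $\dd$ is in that family, $M_{\dd}$ has two-dimensional spaces exactly along a segment of the chain ending at the branch vertex, every structure map along the segment is an isomorphism, and the three ``boundary'' maps -- into the segment from the tail and out of it toward each fork vertex -- are in general position (this last being precisely indecomposability). Now fix $\ee$ and a subrepresentation $N\subseteq M_{\dd}$ of dimension vector $\ee$: at a vertex with $d_{v}\le 1$ the space $N_{v}$ is forced (so $e_{v}\in\{0,d_{v}\}$), while at a vertex with $(d_{v},e_{v})=(2,1)$ the space $N_{v}$ is a line in a plane and compatibility with the segment isomorphisms identifies all such lines across one component of $S$ with a single line $L$ in a fixed plane. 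Translating ``$N$ is a subrepresentation'' arrow by arrow shows that $\mathrm{Gr}_{\ee}(M_{\dd})=\varnothing$ unless $0\le\ee\le\dd$ and every arrow of $Q^{0}$ is acceptable, and that, given this, the collection of boundary maps at a component $C$ of $S$ either leaves $L$ free ($\nu(C)=0$), pins $L$ to one specific line ($\nu(C)=1$), or demands two distinct lines at once ($\nu(C)\ge 2$, whence $\varnothing$). Thus $\mathrm{Gr}_{\ee}(M_{\dd})$ is a product of copies of $\mathbb{P}^{1}$, one factor per component $C$ with $\nu(C)=0$, so $\chi=2^{c}$ when (1)--(3) hold and $\chi=0$ otherwise; (4) and (5) are vacuous in this type.

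\emph{Types $\mbox{B}_{n}$ and $\mbox{C}_{n}$.} I would use the foldings of Dynkin diagrams $\pi\colon\mbox{D}_{n+1}\twoheadrightarrow\mbox{B}_{n}$ (identifying the two fork nodes) and $\pi\colon\mbox{A}_{2n-1}\twoheadrightarrow\mbox{C}_{n}$ (identifying the chain across its fixed midpoint), each chosen so that $\pi$ carries $Q^{0}$ to a $\pi$-equivariant acyclic orientation of the ambient diagram. Since the ambient principal exchange matrix is $\pi$-invariant, a $\pi$-invariant seed of the ambient cluster algebra descends to one of the folded algebra, with cluster variables and $\hat{y}$-variables the images of orbit-products of the ambient ones; by \eqref{eq:cluster-var-fpoly} the $F$-polynomial of the $\mbox{B}_{n}$- (resp.\ $\mbox{C}_{n}$-) cluster variable labeled by a root $\beta'$ is then the image under $u_{k}\mapsto u_{\pi(k)}$ of the $F$-polynomial of any $\mbox{D}_{n+1}$- (resp.\ $\mbox{A}_{2n-1}$-) cluster variable labeled by a root in the fiber over $\beta'$, and $\pi$-equivariance makes all roots in a fiber yield the same image. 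Granting this, the coefficient of $u_{1}^{e_{1}}\cdots u_{n}^{e_{n}}$ in the folded polynomial is the sum of the (already determined) coefficients of all ambient monomials pushed to it; carrying out this sum, within each component of $S$ one binary choice survives unless a critical arrow pins it -- recovering $2^{c}$ under (1)--(3) -- while the ambient acceptability conditions at the two arrows incident to the identified (fork, resp.\ midpoint) vertex, which have no counterpart inside the folded quiver, become exactly the extra type-$\mbox{C}_{n}$ conditions (4) and type-$\mbox{B}_{n}$ condition (5).

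\emph{The main obstacle.} The technical heart is the type-$\mbox{D}_{n}$ quiver-Grassmannian computation, and within it the assertion that two critical arrows meeting one component of $S$ always pin $L$ to two \emph{distinct} lines -- so that the Grassmannian is genuinely empty rather than an accidental point -- which requires controlling, orientation by orientation and root by root, the relative position of the three boundary maps of the two-dimensional segment. The secondary difficulty is making the folding correspondence precise enough to license the plain substitution $u_{k}\mapsto u_{\pi(k)}$ on $F$-polynomials (matching $\gg$-vectors and $\hat{y}$'s through $\pi$), and then doing the monomial bookkeeping carefully enough to extract precisely conditions (4) and (5), which are the anomalies localized at the vertices $n-1$ and $n$.
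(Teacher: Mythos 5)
Your plan follows essentially the same route as the paper: type $\mbox{A}_{n}$ taken as known, type $\mbox{D}_{n}$ via the quiver-Grassmannian formula of \cite{qps2} with an explicit indecomposable representation whose two-dimensional segment carries three distinguished lines (so components of $S$ contribute a free $\mathbb{P}^{1}$, a pinned point, or emptiness according to $\nu(C)$), and types $\mbox{B}_{n}$, $\mbox{C}_{n}$ by folding $\mbox{D}_{n+1}$ and $\mbox{A}_{2n-1}$ through a group of diagram automorphisms and projecting $F$-polynomials with principal coefficients. The two obstacles you single out (distinctness of the three boundary lines, and making the projection of seeds rigorous via equivariant sink/source orbit mutations) are exactly the points the paper settles, so the proposal is correct in approach.
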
 
\begin{remark} If $0 \leq d_{i} \leq 1$ for all $i \in [1, n]$, then $S$ is empty, so the third condition is automatically satisfied.   
\end{remark}

The formula for $\gg$-vectors in the next theorem will be useful in computing quantum $F$-polynomials for the classical types.

\begin{theorem} \cite{yangzel} \label{thm:g-vec-classical} The $\gg$-vector $\gg_{\dd}$ corresponding to $\dd = (d_{1}, \ldots, d_{n}) \in \Phi_{+}(B^{0})$ is given by 
\begin{eqnarray}
-\sum_{i \in [1, n]} d_{i}\textbf{e}_{i} + \sum_{i, j \in [1, n]}  d_{i}[-b^{0}_{ji}]_{+}\textbf{e}_{j}.
\end{eqnarray}
\end{theorem}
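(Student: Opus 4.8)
\textbf{Proof proposal for Theorem \ref{thm:g-vec-classical}.}

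The plan is to reduce the statement to the known recursion for $\gg$-vectors in \cite{coefficients} together with the acyclicity of $B^0$, which lets the recursion ``terminate'' after one step along a suitable mutation path. First I would recall the basic machinery: by \cite[Proposition 7.8 and equation (6.12)]{coefficients}, the $\gg$-vectors obey a mutation rule governed by the tropicalized $Y$-seed, and in the acyclic case there is a canonical mutation sequence realizing any positive root $\dd \in \Phi_+(B^0)$ from the initial seed. Specifically, since $B^0$ is acyclic, fix a linear order on $[1,n]$ compatible with $Q^0$ (so all arrows $i \to j$ have $i$ before $j$, i.e.\ $b^0_{ji} > 0 \Rightarrow$ appropriate order), and recall from \cite{yangzel} that every cluster variable not in the initial cluster is obtained by the ``sink–source'' type mutations; the denominator-vector parametrization $\dd \leftrightarrow \alpha = \sum d_i \alpha_i$ is exactly the one cited in the excerpt.

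The main computation is to relate $\gg_\dd$ to $\dd$ directly. I would use the fact, established in \cite{coefficients} (Proposition 6.1 / Corollary 6.3) and refined in the acyclic setting, that $\gg$-vectors and denominator vectors are related by the substitution encoded in the matrix $B^0$: writing $E_\varepsilon$ and $F_\varepsilon$ for the matrices appearing in the $\gg$-vector mutation rule, one checks that along the canonical path the product telescopes. Concretely, the claimed formula
\begin{eqnarray*}
\gg_\dd = -\sum_{i} d_i \ee_i + \sum_{i,j} d_i [-b^0_{ji}]_+ \ee_j
\end{eqnarray*}
is linear in $\dd$, so it suffices to verify it (a) on the positive roots $\dd$ that are simple roots $\ee_i$ — where $x_\dd$ is obtained by a single mutation $\mu_i$ at the initial seed, and a direct application of \eqref{eq:exchange-relation} and \eqref{def:yhat} gives $\gg_{\ee_i} = -\ee_i + \sum_j [-b^0_{ji}]_+ \ee_j$, matching the formula — and then (b) to propagate the identity along mutations using the piecewise-linear $\gg$-vector recursion, checking that the recursion preserves the affine-linear relationship between $\gg$ and $\dd$ given the sign-coherence of $c$-vectors (which holds here by \cite{yangzel}, or one may simply cite the sign-coherence theorem). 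Alternatively — and this is probably the cleanest route — I would invoke the tropical duality of Nakanishi–Zelevinsky, which says $\gg_\dd$ equals $-\dd$ transformed by $(B^0)^T$-tropical sign data; in the acyclic case the relevant sign is constant, and one reads off precisely $-d_i$ on the $i$-th coordinate plus the correction $\sum_i d_i[-b^0_{ji}]_+$ coming from the off-diagonal entries of $B^0$.

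The step I expect to be the main obstacle is controlling the signs in the $\gg$-vector recursion: the recursion from \cite{coefficients} has the form $g'_j = g_j$ for $j \neq k$ and $g'_k = -g_k + \sum_i [\varepsilon b^0_{ik}]_+ g_i - \sum_i [\varepsilon b^0_{ik}]_+ b^0_{ik}$ (roughly), where $\varepsilon \in \{\pm 1\}$ is determined by the tropical sign of the corresponding $\hat y$; proving that along the path realizing $\dd$ this sign is always the ``good'' one (so that the correction terms assemble into exactly $\sum_i d_i [-b^0_{ji}]_+ \ee_j$ and nothing else survives) requires the explicit description of the mutation paths in each classical type, or a clean appeal to sign-coherence. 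I would therefore structure the proof as: (i) reduce to sign-coherence of the $c$-vectors for $B^0$ (cited); (ii) use tropical duality to get $\gg_\dd$ as an explicit linear-algebraic transform of $\dd$; (iii) unwind that transform using $\tilde B^{t_0} = \left(\begin{smallmatrix} B^0 \\ I \end{smallmatrix}\right)$ and the definition \eqref{def:yhat} of $\hat y_j$ to land on the stated formula; (iv) sanity-check on simple roots and on one or two rank-two examples in types $\mbox{B}_2$ and $\mbox{C}_2$ to confirm the off-diagonal normalization (in particular that it is $b^0_{ji}$, not $b^0_{ij}$, that appears, which is where the Kac-versus-Bourbaki Cartan convention matters).
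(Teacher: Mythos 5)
There is a genuine gap, and it is worth noting first that the paper does not actually prove this statement: Theorem \ref{thm:g-vec-classical} is attributed to \cite{yangzel}, and the paper's entire ``proof'' is the citation ``This follows from Theorems 1.8 and 1.10 of \cite{yangzel}.'' Your plan, by contrast, tries to rederive the formula from the $\gg$-vector mutation recursion of \cite{coefficients}, and the crucial step of that plan is not justified. Your base case is fine: for $\dd = \ee_i$ one mutation at the initial seed gives $x'_i = x_i^{-1}(\hat y_i + 1)\prod_j x_j^{[-b^0_{ji}]_+}$, hence $\gg_{\ee_i} = -\ee_i + \sum_j [-b^0_{ji}]_+\ee_j$, which matches. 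But the reduction ``the claimed formula is linear in $\dd$, so it suffices to verify it on simple roots and then propagate along the recursion'' is circular. The $\gg$-vector evolves under mutation by a piecewise-linear rule involving tropical signs, while the denominator vector evolves by a different piecewise-linear rule; the assertion that these two recursions are intertwined, for every cluster variable, by the single fixed linear map $\dd \mapsto -\dd + \sum_{i,j} d_i[-b^0_{ji}]_+\ee_j$ is precisely the content of the theorem, and is exactly what Theorems 1.8 and 1.10 of \cite{yangzel} establish (by parametrizing both $\gg$-vectors and denominator vectors through the Coxeter-element/principal-minor description of the finite-type cluster structure). You acknowledge this as ``the main obstacle'' but then defer it to sign-coherence or to an unexecuted case analysis of mutation paths, so the argument does not close.

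The fallback via Nakanishi--Zelevinsky tropical duality has the same problem in a different guise: duality relates $\gg$-vectors to $c$-vectors (for the transposed matrix), not to denominator vectors. To land on the stated formula you would still need the identification of denominator vectors with positive roots and the explicit root-theoretic description of the $\gg$-vectors in the acyclic finite-type case --- which is again the Yang--Zelevinsky input. So either carry out the intertwining argument in full (including the verification that the relevant tropical signs along the chosen mutation paths are uniformly ``negative'', which is the sink/source structure Theorem \ref{thm:sinks} hints at), or do what the paper does and simply cite \cite{yangzel}; as written, the proposal proves only the simple-root case.
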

\begin{proof}
This follows from Theorems 1.8 and 1.10 of \cite{yangzel}.
\end{proof}

\begin{example} \label{example:typeB} Let
\begin{eqnarray}
B^{0} = \left(  \begin{array}{rrrr} 0 & -1 & 0 & 0 \\
1 & 0 & 1 & 0 \\
0 & -1 & 0 & -1 \\
0 & 0 & 2 & 0  
\end{array}
\right)
\end{eqnarray}
Then $B^{0}$ is of type $\mbox{B}_{4}$, and $Q^{0}$ is the quiver below:

\[
\xymatrix{
1 \ar[r] & 2 & 3\ar[l] \ar[r] & 4
}
\]

\

Suppose that $\dd = \ee_{1} + 2\ee_{2} + 2\ee_{3} + 2\ee_{4}$.   Note that the only possible critical arrow in $Q^{0}$ is $1 \rightarrow 2$, and this arrow is critical if $e_{1} = e_{2} = 1$.   Let $\ee = (e_{1}, e_{2}, e_{3}, e_{4}) \in \ZZ^{4}$.  Then $u_{1}^{e_{1}}u_{2}^{e_{2}}u_{3}^{e_{3}}u_{4}^{e_{4}}$ occurs with nonzero coefficient in $F_{\dd}$ if and only if $0 \leq \ee \leq \dd$, the three  inequalities $e_{1} \leq e_{2}$, $e_{2} \geq e_{3}$, and $e_{4} \geq e_{3}$ are satisfied, and $\ee \neq (1, 1, 1, 1)$.  Using these conditions, it is easy to check that the $F$-polynomial corresponding to $\dd$ is 
\begin{eqnarray}
F_{\ee_{1} + 2\ee_{2} + 2\ee_{3} + 2\ee_{4}} & = & u_{1}u_{2}^{2}u_{3}^{2}u_{4}^{2} + 2u_{1}u_{2}^{2}u_{3}u_{4}^{2} + 2u_{2}^{2}u_{3}^{2}u_{4}^{2} + 2u_{1}u_{2}^{2}u_{3}u_{4}   \\
\nonumber & & \hspace{0.1cm}  +\, u_{1}u_{2}u_{3}u_{4}^{2}  + \, u_{1}u_{2}^{2}u_{4}^{2} + 2u_{1}u_{2}^{2}u_{4} + u_{1}u_{2}^{2} + u_{1}u_{2}u_{4}^{2}  \\ 
\nonumber & & \hspace{0.1cm}  +\, u_{2}^{2}u_{3}^{2}u_{4}^{2} + \, 2u_{2}^{2}u_{3}u_{4} + 2u_{2}u_{3}u_{4}^{2} + 2u_{2}u_{4}^{2} + 2u_{2}^{2}u_{4}   \\
\nonumber & & \hspace{0.1cm}    +\, u_{2}^{2}u_{4}^{2}  +\, u_{2}^{2} + u_{4}^{2} + 2u_{2}u_{3}u_{4} + 2u_{1}u_{2}u_{4} + u_{1}u_{2}  \\
\nonumber & & \hspace{0.1cm}  + \, 4u_{2}u_{4} + 2u_{4} + 2u_{2} + 1.
\end{eqnarray}

Some other $F$-polynomials corresponding to $B^{0}$ are given below.
\begin{eqnarray}
F_{\ee_{2}} & = & u_{2} + 1 \\
F_{\ee_{2} + \ee_{3}} & = & u_{2}u_{3} + u_{2} + 1 \\
F_{\ee_{2} + \ee_{3} + \ee_{4}} & = & u_{2}u_{3}u_{4} + u_{2}u_{4} + u_{4} + u_{2} + 1 \\
F_{\ee_{2} + 2\ee_{3} + 2\ee_{4}} & = & u_{2}u_{3}^{2}u_{4}^{2} + 2u_{2}u_{3}u_{4}^{2} + 
2u_{2}u_{3}u_{4} + u_{3}u_{4}^{2} + u_{2}u_{4}^{2} \\  
\nonumber &  &  \hspace{1cm} + \, 2u_{2}u_{4} + u_{2} + u_{4}^{2} + 2u_{4} + 1 
\end{eqnarray}

The corresponding $\gg$-vectors are given below.
\begin{eqnarray}
\gg_{\ee_{2}} & = & \ee_{1} - \ee_{2} + \ee_{3} \\
\gg_{\ee_{2} + \ee_{3}} & = & \ee_{1} - \ee_{2} \\
\gg_{\ee_{2} + \ee_{3} + \ee_{4}} & = & \ee_{1} - \ee_{2} + \ee_{3} - \ee_{4} \\
\gg_{\ee_{2} + 2\ee_{3} + 2\ee_{4}} & = & \ee_{1} - \ee_{2} + \ee_{3} - 2\ee_{4} \\
\gg_{\ee_{1} + 2\ee_{2} + 2\ee_{3} + 2\ee_{4}} & = & \ee_{1} - 2\ee_{2} + 2\ee_{3} - 2\ee_{4} 
\end{eqnarray}
\end{example}

Theorem \ref{thm:fpoly-classical} was proven in \cite{quantumfpoly} to be true for type $\mbox{A}_{n}$.  The remainder of this section is devoted to proving Theorem \ref{thm:fpoly-classical} in the remaining types.  The following easily verified proposition will be useful in checking if an arrow $i \rightarrow j$ in $Q^{0}$ is acceptable.

\begin{proposition} \label{prop:acceptable} Let $\dd = (d_{1}, \ldots, d_{n}) \in \Phi_{+}(B^{0})$, $\ee = (e_{1}, \ldots, e_{n}) \in \ZZ^{n}$ such that $0 \leq e_{i} \leq d_{i}$ for all $i \in [1, n]$, and let $i \rightarrow j$ be an arrow in $Q^{0}$.  If at least one of the equalities $d_{i} = 0$, $d_{j} = 0$, $e_{i} = 0$, or $e_{j} = d_{j}$ holds, then $i \rightarrow j$ is an acceptable arrow. 
\end{proposition}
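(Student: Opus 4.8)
The plan is to unwind the definition of \emph{acceptable} and check the inequality $e_i - e_j \le [d_i - d_j]_+$ directly in each of the four listed cases, reducing the two cases about $\dd$ to the two cases about $\ee$. Throughout I will only use the standing hypothesis $0 \le e_k \le d_k$ for all $k$ and the elementary fact that $[x]_+ = \max(x,0) \ge 0$ and $[x]_+ \ge x$.

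First I would dispose of the hypothesis $e_i = 0$: then $e_i - e_j = -e_j \le 0 \le [d_i - d_j]_+$, so $i \to j$ is acceptable. Next, the hypothesis $e_j = d_j$: here $e_i - e_j = e_i - d_j \le d_i - d_j \le [d_i - d_j]_+$, using $e_i \le d_i$ for the first inequality. For the hypothesis $d_i = 0$: since $0 \le e_i \le d_i = 0$ forces $e_i = 0$, this falls into the first case. Finally, the hypothesis $d_j = 0$: since $0 \le e_j \le d_j = 0$ forces $e_j = 0 = d_j$, this falls into the second case. In all four cases the arrow is acceptable, which is exactly the claim.

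There is no genuine obstacle here; the statement is a direct consequence of the definition of acceptability together with the coordinatewise bound $0 \le \ee \le \dd$, and the only ``work'' is organizing the four cases and observing that the two conditions on $\dd$ collapse to the two conditions on $\ee$ because a coordinate of $\dd$ being zero pins down the corresponding coordinate of $\ee$. The point of recording it as a separate proposition is purely to streamline the repeated acceptability checks in the case analysis that follows for types $\mbox{B}_n$, $\mbox{C}_n$, and $\mbox{D}_n$.
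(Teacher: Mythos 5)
Your verification is correct and is exactly the routine case check the paper has in mind when it calls this proposition ``easily verified'' (no proof is given there). The reduction of the $d_i=0$ and $d_j=0$ cases to the $e_i=0$ and $e_j=d_j$ cases via $0\le\ee\le\dd$ is the natural argument and nothing is missing.
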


\subsection{Type $\mbox{D}_{n}$}

In this subsection, let $B^{0}$ be an acyclic $n \times n$ exchange matrix of type $\mbox{D}_{n}$.   The underlying undirected graph of the quiver $Q^{0} = Q(B^{0})$ is simply the Dynkin diagram of the corresponding type.
For these types, we will use the formulas for $F$-polynomials given in \cite{qps2}.  First, recall some terminology and notation.  

A \emph{representation} $M$ (over $\mathbb{C}$) of a quiver $Q$ is given by assigning $\mathbb{C}$-vector space to each vertex $i \in [1, n]$ and assigning a $b$-tuple $(\varphi_{ji}^{(1)}, \ldots, \varphi_{ji}^{(b)})$ of linear maps $\varphi_{ji}^{(k)}: M_{i} \rightarrow M_{j}$ to every arrow $i \rightarrow j$ of multiplicity $b = b_{ji}$.  
The \emph{dimension} of a representation $M$ is the vector $\dd = (d_{1}, \ldots, d_{n}) \in \ZZ^{n}$ given by $d_{i} = \mbox{dim}(M_{i})$.  A \emph{subrepresentation} $N$ of a representation $M$ is given by a collection of subspaces $N_{i} \subset M_{i}$ for each $i \in [1, n]$ such that $\varphi_{ji}^{(k)}(N_{i}) \subset N_{j}$ for all $i, j, k$.  For a representation $M$ of dimension $\dd$ and for an integer vector $\ee = (e_{1}, \ldots, e_{n})$ such that $0 \leq \ee \leq \dd$ (i.e., $0 \leq e_{i} \leq d_{i}$ for all $i$), let $\mbox{Gr}_{\ee}(M)$ denote the variety of all subrepresentations of $M$ of dimension $\ee$.  Thus, $\mbox{Gr}_{\ee}(M)$ is a closed subvariety of the product of Grassmanians $\prod \mbox{Gr}_{e_{i}}(M_{i})$.  Denote by $\chi_{\ee}(M)$ the Euler-Poincare characteristic of $\mbox{Gr}_{\ee}(M)$ (see \cite[Section 4.5]{fulton}).

 To compute $F_{\dd}$, we use the following result adapted from \cite{qps2}:

\begin{theorem}  \label{fpoly-quiver-rep} Assume $B^{0}$ is as above. Let $\dd \in \Phi_{+}(B^{0})$, and let $M$ be an indecomposable representation of $Q^{0}$ of dimension $\dd$.  Then 
\begin{eqnarray}
F_{\dd} = \displaystyle \sum \chi_{\ee}(M)u_{1}^{e_{1}}\ldots u_{n}^{e_{n}},
\end{eqnarray}
where the summation ranges over $\ee = (e_{1}, \ldots, e_{n}) \in \ZZ^{n}$ such that $0 \leq \ee \leq \dd$.
\end{theorem}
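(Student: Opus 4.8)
The plan is to derive Theorem \ref{fpoly-quiver-rep} from the main formula for $F$-polynomials in quiver-representation language established in \cite{qps2}. The cited result there expresses the $F$-polynomial attached to a dimension vector $\dd$ in terms of a choice of representation of the quiver, and the task here is essentially one of specialization and identification: we must verify that the hypotheses of the \cite{qps2} formula are met in our setting, and that its output coincides, term by term, with the $F$-polynomial $F_{\dd}$ defined via Proposition \ref{thm:cluster-var-formula}.

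The first step is to recall precisely which $F$-polynomial the \cite{qps2} formula computes. In that framework one fixes an acyclic quiver $Q$ and, for a rigid indecomposable representation $M$, the associated cluster variable is given by the Caldero--Chapoton type formula, and correspondingly its $F$-polynomial equals $\sum_{\ee} \chi_{\ee}(M)\, \uu^{\ee}$. So the second step is to check that, under our standing hypotheses, the representation $M$ of $Q^{0}$ chosen in the statement is of the right kind: since $B^{0}$ is acyclic of type $\mathrm{D}_{n}$, the quiver $Q^{0}$ is a Dynkin quiver, hence representation-finite, and by Gabriel's theorem the indecomposable representations are in bijection with the positive roots via their dimension vectors. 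Thus for $\dd \in \Phi_{+}(B^{0})$ — which, as noted in the text, are exactly the positive roots of type $\mathrm{D}_{n}$ — there is an indecomposable representation $M$ of dimension $\dd$, it is unique up to isomorphism, and (being indecomposable over a Dynkin quiver) it is rigid. This is the content needed to invoke the \cite{qps2} formula.

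The third step is to match the $F$-polynomial of the cluster variable attached to $M$ with $F_{\dd}$. Here I would invoke the result of \cite{yangzel} (already quoted in the excerpt) that cluster variables of $\prinA(B^{0},t_{0})$ outside the initial cluster correspond bijectively to $\Phi_{+}(B^{0})$ via denominator vectors, together with the standard fact (again from \cite{coefficients}, via the principal-coefficient specialization) that the denominator vector of a cluster variable equals the dimension vector of the corresponding indecomposable representation when $B^{0}$ is acyclic. Since the Euler-characteristic polynomial $\sum_{\ee}\chi_{\ee}(M)\uu^{\ee}$ has constant term $\chi_{\mathbf 0}(M)=1$ and is not divisible by any $u_{i}$ (the subrepresentation $M$ itself contributes the top monomial $\uu^{\dd}$, and the zero subrepresentation gives the constant term, so no variable is a common factor), it is the unique primitive polynomial appearing in the cluster-variable expansion, hence equals $F_{\dd}$ by the uniqueness clause of Proposition \ref{thm:cluster-var-formula}. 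Finally, I would note that the summation may harmlessly be taken over all $\ee$ with $0 \leq \ee \leq \dd$, since $\mathrm{Gr}_{\ee}(M)$ is empty — hence $\chi_{\ee}(M)=0$ — whenever $M$ has no subrepresentation of dimension $\ee$.

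The main obstacle is bookkeeping of conventions rather than any deep argument: the formula in \cite{qps2} is stated for a particular normalization of quivers, exchange matrices, and the correspondence between arrows $i\to j$ and matrix entries $b_{ij}$, and one must check that the convention $Q(B)$ has an arrow $i\to j$ when $b_{ji}>0$ (as fixed in the excerpt) together with the Kac-style Cartan matrix convention matches the one under which the \cite{qps2} formula was proved; a sign or transpose discrepancy would swap the roles of representations and their duals, though since the positive roots and their associated indecomposables are closed under the relevant symmetry this would at worst require relabeling. I would handle this by explicitly spelling out the dictionary at the start, so that "adapted from \cite{qps2}" is justified by an unambiguous translation rather than left implicit.
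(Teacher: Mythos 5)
Your proposal is correct and follows essentially the same route as the paper, which states this theorem without proof as a result ``adapted from \cite{qps2}'': you simply make explicit the adaptation the paper leaves implicit (Gabriel's theorem and rigidity of indecomposables over a Dynkin quiver, the identification of denominator vectors with dimension vectors, and the primitivity/uniqueness argument via Proposition \ref{thm:cluster-var-formula}). Nothing in your sketch conflicts with the paper's intended justification.
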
 

The following lemma will be useful in determining when there exist subrepresentations of $M$ of dimension $\ee$:

\begin{lemma} \label{acceptable-arrows}
Let $M'$ and $M''$ be vector spaces of dimensions $d'$ and $d''$,
respectively, and $\varphi: M' \to M''$ be a linear map of maximal
possible rank $\min(d',d'')$.  Let $e'$ and $e''$ be two integers such
that $0 \leq e' \leq d'$ and $0 \leq e'' \leq d''$. Then the following
conditions are equivalent:
\begin{enumerate}
\item There exist subspaces $N' \subseteq M'$ and $N'' \subseteq M''$
such that $\dim N' = e'$, $\dim N'' = e''$, and $\varphi (N')
\subseteq N''$.
\item $e' - e'' \leq [d' - d'']_+$.
\end{enumerate}
\end{lemma}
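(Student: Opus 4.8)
The plan is to reduce everything to a rank--nullity computation together with one elementary fact about choosing subspaces in general position relative to a fixed subspace. Write $k = \dim \ker \varphi$. Since $\varphi$ has maximal rank $\rank \varphi = \min(d',d'')$, we have $k = d' - \min(d',d'') = [d'-d'']_+$, which is exactly the quantity on the right-hand side of condition (2). So the real content of the lemma is a sharp bound on how small $\dim \varphi(N')$ can be made as $N'$ ranges over $e'$-dimensional subspaces of $M'$.

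For the implication (1) $\Rightarrow$ (2), suppose $N' \subseteq M'$ and $N'' \subseteq M''$ are as in (1). Applying rank--nullity to the restriction $\varphi|_{N'} \colon N' \to M''$ gives $\dim \varphi(N') = e' - \dim(N' \cap \ker\varphi) \geq e' - k$, since $\dim(N' \cap \ker\varphi) \leq \dim \ker\varphi = k$. On the other hand, $\varphi(N') \subseteq N''$ forces $\dim \varphi(N') \leq \dim N'' = e''$. Combining the two inequalities yields $e' - e'' \leq k = [d'-d'']_+$, which is (2).

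For the converse (2) $\Rightarrow$ (1), the idea is to choose $N'$ so that it absorbs as much of $\ker\varphi$ as possible. Concretely, pick a basis of $\ker\varphi$, extend it to a basis of $M'$, and let $N'$ be the span of the first $e'$ vectors of this basis, taking the kernel vectors first; then $\dim(N' \cap \ker\varphi) = \min(e', k)$, so $\dim \varphi(N') = e' - \min(e', k) = [e' - k]_+$. Now the hypothesis $e' - e'' \leq k$ is equivalent to $[e'-k]_+ \leq e''$ (check the cases $e' \leq k$ and $e' > k$ separately), and trivially $e'' \leq d''$. Hence $\varphi(N')$ is a subspace of $M''$ of dimension at most $e''$, so it can be enlarged to a subspace $N'' \subseteq M''$ of dimension exactly $e''$; this $N''$ satisfies $\varphi(N') \subseteq N''$, giving (1).

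There is no serious obstacle; the one point that deserves a line of care is the claim in the converse that the minimal image dimension $[e'-k]_+$ is actually attained by an $e'$-dimensional subspace $N'$ — that is, that $N'$ may be chosen with $\dim(N' \cap \ker\varphi) = \min(e',k)$. This is precisely the basis-extension argument above, and it is the crux of the "if" direction; everything else is bookkeeping with $[\,\cdot\,]_+$.
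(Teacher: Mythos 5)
Your proof is correct and follows essentially the same route as the paper: both arguments hinge on $\dim\ker\varphi = [d'-d'']_+$, use rank--nullity on $\varphi|_{N'}$ for (1)$\Rightarrow$(2), and for the converse choose $N'$ with $\dim(N'\cap\ker\varphi)=\min(e',k)$ (the paper phrases this as taking $N'$ inside or containing $\ker\varphi$, which is the same choice your basis-extension makes) and then enlarge $\varphi(N')$ to an $e''$-dimensional $N''$.
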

\begin{proof} First, observe that 
\begin{eqnarray}
\mbox{dim}(\mbox{Ker}\,\varphi) = d' - \mbox{min}(d', d'') = [d' - d'']_{+}.
\end{eqnarray}
For the proof that (1) implies (2), 
\begin{eqnarray}
[d' - d'']_{+} \geq \mbox{dim}(\mbox{Ker}\,\varphi |_{N'}) = e' - \mbox{dim}(\mbox{Im}\, \varphi |_{N'}) \geq e' - e''.
\end{eqnarray}
Now assume that (2) holds.   Choose a subspace $N'$ of $M'$ of dimension $e'$ so that $N' \supset \mbox{Ker}\,\varphi$ if $e' \geq [d' - d'']_{+}$, or $N' \subset \mbox{Ker}\,\varphi$ if $e' \leq [d' - d'']_{+}$.  Then 
\begin{eqnarray}
\hspace{1cm} \mbox{dim}(\mbox{Im}\, \varphi |_{N'}) & = & e' - \mbox{dim}(\mbox{Ker}\, \varphi |_{N'}) \\
& = & \left\{
                \begin{array}{ll} 
                     e' - [d' - d'']_{+} & \mbox{ if } e' \geq [d' - d'']_{+} \\
                     0 & \mbox{ otherwise.} 
                \end{array}
             \right. 
\end{eqnarray}
Since $\mbox{dim}(\mbox{Im}\, \varphi |_{N'}) \leq e''$ in either case, any $e''$-dimensional subspace $N''$ of $M''$ which contains $\mbox{Im}\, \varphi |_{N'}$ will work.
\end{proof}

We may restrict attention to $\ee \in \ZZ^{n}$ which satisfy the first two conditions given in Theorem \ref{thm:fpoly-classical}.  (If these conditions do not hold, Theorem \ref{fpoly-quiver-rep} together with Lemma \ref{acceptable-arrows} imply that the coefficient of $u_{1}^{e_{1}}\ldots u_{n}^{e_{n}}$ in $F_{\dd}$ equals 0.)   If $d_{i} = 0$ or 1 for all $i$, then Theorem \ref{thm:fpoly-classical} follows from \cite[Theorem 7.4]{quantumfpoly}.   If $d_{i} > 1$ for some $i$, then $\dd = \ee_{p} + \ldots + \ee_{r - 1} + 2\ee_{r} + \ldots + 2\ee_{n - 2} + \ee_{n - 1} + \ee_{n}$ for some $p, r \in \ZZ$ satisfying $1 \leq p < r \leq n - 2$.   An indecomposable representation $M = (M_{i})$ of dimension $\dd$ can be selected so that for $p \leq i \leq r - 2$ and for $r \leq i \leq n - 3$, the map between $M_{i}$ and $M_{i + 1}$ (which is either $\varphi_{i, i + 1}$ or $\varphi_{i + 1, i}$) is the identity map.  If $r - 1 \rightarrow r$ in $Q^{0}$, then let $V_{r} = Im(\varphi_{r, r - 1})$; otherwise, let $V_{r} = Ker(\varphi_{r - 1, r})$.  For each value $j = n - 1$ and $j = n$,  if $n - 2 \rightarrow j$ in $Q^{0}$, then let $V_{j} = Ker(\varphi_{j, n - 2})$; otherwise, let $V_{j} = Im(\varphi_{n - 2, j})$.   Then $V_r, V_{n-1}$, and $V_n$ are three distinct one-dimensional subspaces of the two-dimensional space $M_{n-2}$.

To compute $\chi_{\ee}(M)$, we will demonstrate how to construct all possible subrepresentations which have dimension vector $\textbf{e}$ (if there are any), from which it will be easy to compute $\chi_{\textbf{e}}(M)$.  In order to verify that a given $N = (N_{i})$ (with dimension $\ee$) is a subrepresentation of $M$, we need to check that for each $i, j \in [1, n]$ which are connected by an edge in $Q^{0}$, 
\begin{eqnarray} \label{subrepprop}
\varphi_{ji}(N_{i}) \subset N_{j} \mbox{ if } i \rightarrow j \mbox{ in } Q^{0}, \mbox{ or } \varphi_{ij}(N_{j}) \subset N_{i} \mbox{ if } j \rightarrow i \mbox{ in } Q^{0}. 
\end{eqnarray}
Observe that for any $i \in [1, n] - S$, there is only one possible subspace of $M_{i}$ of dimension $e_{i}$.  Consider a component $C$ of $S$.  If $i, j$ are vertices in $C$ and $i \rightarrow j$, then (\ref{subrepprop}) implies that $N_{i} = N_{j}$.  Thus, once a subspace of dimension 1 is chosen for one vertex of the component, all of the vertices in that component must be assigned that same subspace.   If $i, j \in [p, n]$ such that $i \rightarrow j$ in $Q^{0}$ and so that at least one of $i, j$ is in $[r, n - 2] - S$, then it is easy to check that $\varphi_{ji}(N_{i}) \subset N_{j}$.  For example, if $i \in [r, n - 2] - S$, then $e_{i} = 0$ or $e_{i} = 2$.  If $e_{i} = 0$, then $\varphi_{ji}(N_{i}) = 0$.  If $e_{i} = 2$, then the fact that $i \rightarrow j$ is acceptable implies $e_{j} = d_{j}$, so $N_{j} = M_{j} \supset \varphi_{ji}(N_{i})$.  One can also easily check that for if $i \rightarrow j$ with $i, j \in [1, r - 1]$, then $\varphi_{ji}(N_{i}) \subset N_{j}$.  Thus, it only remains to verify for each proposed subrepresentation $(N_{i})$ that the property (\ref{subrepprop}) holds for $(i, j) =  (r - 1, r)$ if $r \in S$, and that the property holds for $(i, j) = (n - 2, n - 1),(n - 2, n)$ if $n - 2 \in S$.

Next, consider which 1-dimensional subspaces may be assigned to each component.  Observe that when $(i, j) = (r - 1, r)$ is a critical pair, property (\ref{subrepprop}) is satisfied for $(i, j)$ by $N$ if and only if $N_{r} = V_{r}$.   Also, when $(i, j) = (n - 2, n)$ (resp. $(i, j) = (n - 2, n - 1)$) is a critical pair, property (\ref{subrepprop}) is satisfied for $(i, j)$ if and only if $N_{n - 2} = V_{n}$ (resp. $N_{n - 2} = V_{n - 1}$).

Let $C$ be a component of $S$.  If $\nu(C) = 0$, then any 1-dimension subspace of $\mathbb{C}^{2}$ may be assigned to the vertices of $C$.   Suppose $\nu(C) = 1$.  If $(r - 1, r)$ is a critical pair and $r \in C$, then the subspace on the vertices of $C$ must by $V_{r}$.  If $(n - 2, n)$ (resp. $(n - 2, n - 1)$) is a critical pair and $n - 2 \in C$, then the subspace on the vertices of $C$ must $V_{n}$ (resp. $V_{n - 1}$).  Finally, if $\nu(C) \geq 2$, then there is no 1-dimensional space which can be assigned to the vertices of $C$ so that (\ref{subrepprop}) is satisfied.   Since the Euler-Poincare characteristic of the projective line  $\mathbb{P}^{1}$ is 2, it follows that the contribution to $\chi_{\textbf{e}}(M)$ is a multiplicative factor of 0, 1, or 2 when $\nu(C) = 0$, $\nu(C) = 1$, or $\nu(C) \geq 2$, respectively.  This concludes the proof of Theorem \ref{thm:fpoly-classical} in type $\mbox{D}_{n}$.

\subsection{Projections of $F$-polynomials and $\textbf{g}$-vectors} 

To prove Theorem \ref{thm:fpoly-classical} in the remaining types, we will show that $F$-polynomials and $\gg$-vectors of type $\mbox{B}_{n}$ and $\mbox{C}_{n}$ can be obtained as certain ``projections" of $F$-polynomials and $\gg$-vectors of type $\mbox{D}_{n + 1}$ and $\mbox{A}_{2n - 1}$, respectively.  
In this subsection, we prove a more general result (Theorem \ref{thm:fpoly-proj}) concerning such projections.
For this subsection, let $B^{0}$ be an arbitrary $n \times n$ skew-symmetric integer matrix  such that $B^{0}$ is acyclic.     The methods are similar to those used by Dupont in \cite{dupont} for coefficient-free cluster algebras, but to give the desired results, we will need to work with cluster algebras with principal coefficients.  

We begin by recalling some terminology and notation from \cite{dupont}.  For $\sigma \in S_{n}$, let $\tilde{\sigma} \in S_{2n}$ be given by $\tilde{\sigma}(i) = \sigma(i)$ and $\tilde{\sigma}(i + n) = \sigma(i) + n$ for $i \in [1, n]$.  (Thus, $\sigma$ acts on the set $[1, 2n]$ via $\tilde{\sigma}$.)  If $\tilde{B} = (b_{ij})$ is in $M_{2n, n}(\ZZ)$, then define an action of $\sigma$ on $\tilde{B}$ by $\sigma \tilde{B} = (b_{\tilde{\sigma}^{-1}i, \tilde{\sigma}^{-1}j})$.  We may also define an action of $\sigma$ on matrices $B$ in $M_{n, n}(\ZZ)$ in a similar way.   Say that a subgroup $G$ of $S_{n}$ is an \emph{automorphism group} for $\tilde{B}$ (resp. $B$) if $\sigma \tilde{B} = \tilde{B}$ (resp. $\sigma B = B$) for all $\sigma \in G$.   

Fix a group $G$ of automorphisms of the matrix $B^{0}$ for the remainder of this subsection.  (Note that $G$ is also a group of automorphisms for $\tilde{B}^{0}$.)

Let $I = [1, n]$.  Write $\overline{I}$ for the set of orbits under the action of $G$ on $I$, and $\overline{i}$ for the orbit $Gi$. Fix some ordering $\overline{I} = \{ \overline{j_{1}}, \ldots, \overline{j_{r}} \}$, where $r = |\overline{I}|$.   For a matrix $B \in M_{n, n}(\ZZ)$, define the \emph{quotient matrix} $\overline{B} = (\overline{b}_{\overline{i}, \overline{j}})$ to be an $|\overline{I}| \times |\overline{I}|$ matrix whose entries are given by 
\begin{eqnarray} \label{def:quotient-matrix}
\overline{b}_{\overline{i}, \overline{j}} = \sum_{\ell \in \overline{i}} b_{\ell, j},
\end{eqnarray}
where $j$ is fixed representative of $\bar j$.
It is easy to show that the definition of quotient matrix does not depend on the choice of representative from the orbit $\overline{j}$.  Furthermore, one may verify that if $B$ is skew-symmetric and $D$ is the $|\overline{I}| \times |\overline{I}|$ diagonal matrix with entries $d_{\overline{i}} = | \mbox{stab}_{G}(i) |$, where $\mbox{stab}_{G}(i)$ is the stabilizer of $i$ under the action of $G$, then $D\overline{B}$ is skew-symmetric.  That is, $\overline{B}$ is skew-symmetrizable.

\begin{definition}  Let $\tilde{B} = (b_{ij})$ be a $2n \times n$ matrix with automorphism group $G$.  We say that $\tilde{B}$ is \emph{admissible} if $b_{ij} = 0$ whenever $i, j$ are in the same $G$-orbit of $I$.  (Equivalently, if $B$ is the principal part of $\tilde{B}$, then admissibility means that there is no arrow between $i$ and $j$ in $Q(B)$ whenever $i, j$ are in the same $G$-orbit.)   
\end{definition}

For a $G$-orbit $\Omega = \{ i_{1}, \ldots, i_{s} \}$ of $I$, let $\mu_{\Omega} = \mu_{i_{s}} \ldots \mu_{i_{1}}$ be a composition of mutations; we call $\mu_{\Omega}$ an \emph{orbit mutation}.  By using the same argument as in \cite{dupont}, it is not difficult to check that orbit mutations are well-defined, i.e. independent of the order of the elements in $\Omega$.

Let the variables for the $F$-polynomials corresponding to $\overline{B^{0}}$ be given by $u_{\overline{i}}$ for $\overline{i} \in \overline{I}$.  Define the \emph{projection} $\pi$ to be the 
homomorphism of algebras given by 
\begin{eqnarray}
\pi: \ZZ[u_{i} : i \in I] & \rightarrow & \ZZ[u_{\overline{i}} : \overline{i} \in \overline{I}] \\
u_{i} & \mapsto & u_{\overline{i}}.
\end{eqnarray}
Also, for $\gg = (g_{1}, \ldots, g_{n}) \in \ZZ^{n}$, define the \emph{quotient} of $\gg$ to be 
\begin{eqnarray}
\overline{\gg} = \left(\sum_{\ell \in \overline{j_{1}}} g_{\ell}, \ldots, \sum_{\ell \in \overline{j_{r}}} g_{\ell} \right) \in \ZZ^{|\overline{I}|}
\end{eqnarray}

Let $\overline{\mathbb{T}_{n}}$ be the $|\overline{I}|$-regular tree such that the $|\overline{I}|$ edges emanating from a vertex are labeled by distinct elements of $\overline{I}$; write $\overline{t}_{0}$ for the initial vertex in $\overline{\mathbb{T}_{n}}$. 

Theorem \ref{thm:fpoly-proj} says that certain $F$-polynomials and $\textbf{g}$-vectors corresponding to the initial matrix $\overline{B^{0}}$ can be obtained as ``projections" or ``quotients" of $F$-polynomials or $\textbf{g}$-vectors corresponding to the initial matrix $B^{0}$. 

\begin{theorem} \label{thm:fpoly-proj} Suppose $\Omega_{1}, \ldots, \Omega_{s}$ are $G$-orbits with $\Omega_{i} = \overline{k_{i}}$.  For $p \in [1, s]$, let
\begin{eqnarray}
B^{p} = (b^{p}_{ij}) = \mu_{\Omega_{p}}\ldots \mu_{\Omega_{1}}(B^{0}).
\end{eqnarray}
Suppose that $k_{p}$ is a sink or source in $Q(B^{p - 1})$ for each $p = 1, \ldots, s$.  Let $t \in \mathbb{T}_{n}$ be the vertex obtained by a path from $t_{0}$ whose edges are labeled by all of the elements of $\Omega_{s}$, followed by all the edges in $\Omega_{s - 1}$, and so on, to finally the edges in $\Omega_{1}$.  Let $\overline{t} \in \overline{\mathbb{T}_{n}}$ be the path obtained from the initial vertex $\overline{t}_{0}$ via the edges $\overline{k_{s}}, \ldots, \overline{k_{1}}$.   Let $j \in [1, n]$.

Then
\begin{eqnarray}
F^{\overline{B^{0}}; \overline{t}_{0}}_{\overline{j}; \overline{t}} = \pi(F^{B^{0}; t_{0}}_{j; t}) \\
\gg_{\overline{j}; \overline{t}}^{\overline{B^{0}}; \overline{t}_{0}} = \overline{\gg_{j; t}^{B^{0}; t_{0}}}
\end{eqnarray}
\end{theorem}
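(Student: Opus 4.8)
The plan is to proceed by induction on $s$, the number of orbit mutations, comparing the upstairs cluster algebra $\prinA(B^0, t_0)$ with the downstairs one $\prinA(\overline{B^0}, \overline{t}_0)$ step by step along the two paths. The base case $s = 0$ is trivial since $F^{B^0;t_0}_{j;t_0} = 1$ for all $j$, so $\pi(F^{B^0;t_0}_{j;t_0}) = 1 = F^{\overline{B^0};\overline{t}_0}_{\overline{j};\overline{t}_0}$, and similarly the $\gg$-vectors are standard basis vectors whose quotients match. The inductive step is where the work lies: I would first establish a ``compatibility of mutation with quotient'' lemma, namely that if $k$ is a sink or source in $Q(B)$ and $\Omega = \overline{k}$ is its orbit (so by admissibility the vertices of $\Omega$ are pairwise non-adjacent and hence mutation in each is independent), then $\mu_\Omega(B)$ has quotient $\mu_{\overline{k}}(\overline{B})$; this is the analogue of Dupont's compatibility statement in \cite{dupont}, adapted to the $2n \times n$ (principal-coefficient) setting by carefully checking the bottom $n$ rows behave correctly under the quotient.

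Next I would set up the recursions that $F$-polynomials and $\gg$-vectors satisfy under a single mutation at a sink or source. Recall from \cite{coefficients} (e.g.\ the separation-of-additions formulas) that when one mutates $\prinA$ at a vertex $k$ which is a sink or source of the current quiver, the new $F$-polynomial $F_{k;t'}$ is obtained from the old $\hat y_k$ and the $F$-polynomials at $t$ by an explicit rational expression, and similarly for $\gg$-vectors there is the piecewise-linear recursion. Because the vertices in a single orbit $\Omega$ are pairwise non-adjacent, the orbit mutation $\mu_\Omega$ acts on each coordinate essentially independently, and the same recursion is what defines $F_{\overline{k};\overline{t'}}$ downstairs in terms of $\overline{\hat y}_{\overline k}$. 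The key point is that the projection $\pi$ intertwines the upstairs $\hat y_i$ with the downstairs $\hat y_{\overline i}$ in the right way: applying $\pi$ to $\hat y_i = x_{n+i}\prod_\ell x_\ell^{b^0_{\ell i}}$ and tracking how the $G$-action identifies the $x$'s in an orbit, one gets exactly $\hat y_{\overline i}$ built from $\overline{B^0}$, using the definition (\ref{def:quotient-matrix}) of the quotient matrix. Feeding this through the recursion and invoking the inductive hypothesis gives $F^{\overline{B^0};\overline{t}_0}_{\overline j;\overline t} = \pi(F^{B^0;t_0}_{j;t})$; the $\gg$-vector statement follows by the same induction applied to the piecewise-linear mutation rule, checking that $\overline{(\cdot)}$ commutes with $[\,\cdot\,]_+$-type operations because all the entries being summed within an orbit have the same sign (a consequence of $G$-invariance of $B^p$).

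The main obstacle I anticipate is twofold. First, one must verify that the hypothesis ``$k_p$ is a sink or source of $Q(B^{p-1})$'' is genuinely preserved in the form needed: after an orbit mutation the quiver changes, and one needs that the path of edges in $\TT_n$ obtained by first doing all of $\Omega_s$, then $\Omega_{s-1}$, etc., really lands at a vertex $t$ whose seed is $\mu_{\Omega_1}\cdots\mu_{\Omega_s}$ applied in the correct order — i.e.\ the bookkeeping of which orbit is mutated when, and the fact that within an orbit the order is irrelevant (well-definedness of orbit mutation, already granted in the excerpt). Second, and more delicate, is making the ``$\pi$ intertwines $\hat y$'' computation rigorous at each stage rather than only at $t_0$: one needs that the cluster variables $x_{i;t}$ for $i$ in a common $G$-orbit are actually mapped to a common element, which requires knowing that the $G$-symmetry of the seed persists along the mutation path — true because each $B^p$ is $G$-invariant and the orbit mutations respect the symmetry, but it should be stated and used carefully. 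Once these symmetry-persistence facts are in hand, the rest is a routine but somewhat lengthy matching of two recursions, exactly parallel to \cite{dupont} except carrying the extra coefficient rows.
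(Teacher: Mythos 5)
Your overall skeleton -- induct on the number of orbit mutations, show that taking quotients of the exchange matrix commutes with orbit mutation, and check that $\pi$ intertwines the $\hat y$'s while carrying the extra coefficient rows -- is essentially the paper's strategy (the paper packages it as: every extended matrix $\tilde B^p$ along the path is \emph{strongly admissible}, hence the whole seed at $t$ projects onto the seed at $\overline t$, and then one applies $\pi$ once to the separation formula $x_{j;t}=F_{j;t}(\hat y_1,\dots,\hat y_n)x_1^{g_1}\cdots x_n^{g_n}$ and invokes the uniqueness in Proposition \ref{thm:cluster-var-formula}). However, there is a genuine gap at exactly the point you dispose of in a parenthesis: the assertion that the entries summed within an orbit all have the same sign ``as a consequence of $G$-invariance of $B^p$'' is not correct. $G$-invariance gives $b^p_{\sigma\ell,\sigma k}=b^p_{\ell k}$, i.e.\ it relates an entry to an entry in a \emph{different} column; it says nothing about the relative signs of $b^p_{\ell k}$ and $b^p_{\sigma\ell,k}$ (same column $k$, rows in one orbit), which is precisely what you need in order to pull $[\,\cdot\,]_+$ through the orbit sums in the $F$-polynomial and $g$-vector recursions.

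For the principal part the sign condition does hold, but the proof needs acyclicity, not symmetry alone: sink/source mutations keep $Q(B^p)$ acyclic, and a path $i\to\ell\to j$ with $i,j$ in one $G$-orbit would, by repeatedly applying $\sigma$, produce a directed cycle -- this is the paper's argument. For the bottom $n$ rows (the $c$-vector entries, which enter your recursions through the tropical factors with exponents $[b^t_{n+j,k}]_+$ and through the piecewise-linear $g$-vector rule), the required statement is sign-coherence of $c$-vectors, and neither $G$-invariance nor acyclicity yields it; the paper imports it from the fact that every $F^{B^0;t_0}_{j;t}$ has constant term $1$ (\cite{qps2}, using that $B^0$ is skew-symmetric) combined with \cite[Proposition 5.6]{coefficients}. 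Without this input, your key step -- that $\pi$ intertwines the exchange relation and the $\hat y$'s at \emph{every} stage of the path, not just at $t_0$ -- is unjustified and the induction does not close. (The same constant-term-$1$ fact is also needed at the very end, to know that $\pi(F_{j;t})$ really is the $F$-polynomial downstairs, via primitivity/constant term, before the $g$-vector can be read off.)
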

\begin{proof}  To prove the theorem, we need to first consider the cluster algebras $\prinA(B^{0}; t_{0})$ and $\prinA(\overline{B^{0}}; \overline{t}_{0})$, and show that the seed at $\overline{t}$ in $\prinA(\overline{B^{0}}; \overline{t}_{0})$ is the ``projection" of the seed at $t$ in $\prinA(B^{0}; t_{0})$.   In \cite{dupont}, the seeds of the coefficient-free cluster algebras with initial exchange matrices $B^{0}$ and $\overline{B^{0}}$ were related.    Some of the results proven in \cite{dupont} easily carry over to the current sitation with principal coefficients (namely, Lemma \ref{lemma:g-inv} and Lemma \ref{lemma:proj-mut}), so the proofs are omitted here.  However, for finishing the proof of the result about the projection of the seeds, some stronger conditions are needed (see Proposition \ref{prop:projection}). 

Let the initial seed of $\prinA(B^{0}, t_{0})$ be given by $(\tilde{\textbf{x}}, \tilde{B}^{0})$, where
\begin{eqnarray}
\tilde{B}^{0} & = & \left( \begin{array}{c} B^{0} \\ I_{n} \end{array} \right) \\    
\tilde{\textbf{x}} & = & (x_{1}, \ldots, x_{n}, y_{1}, \ldots, y_{n}).
\end{eqnarray}

By abuse of notation, let the \emph{projection} $\pi$ be the the
homomorphism of algebras given by 
\begin{eqnarray}
\pi: \ZZ[x_{i}^{\pm 1}, y_{i}^{\pm 1} : i \in I] & \rightarrow & \ZZ[x_{\overline{i}}^{\pm 1}, y_{\overline{i}}^{\pm 1} : \overline{i} \in \overline{I}] \\
x_{i} & \mapsto & x_{\overline{i}} \\
y_{i} & \mapsto & y_{\overline{i}}
\end{eqnarray}
We also apply $\pi$ to ordered $2n$-tuples $\tilde{\textbf{v}} = (v_{1}, \ldots, v_{n}, y_{1}, \ldots, y_{n})$ (with each $v_{j}$ in $\ZZ[x_{i}^{\pm 1}, y_{i}^{\pm 1} : i \in I]$):
\begin{eqnarray}
\pi(\tilde{\textbf{v}}) = (\pi(v_{j_{1}}), \ldots, \pi(v_{j_{r}}), y_{\overline{j_{1}}}, \ldots, y_{\overline{j_{t}}}).
\end{eqnarray}
Note that $\pi(\tilde{\textbf{v}})$ is a $2|\overline{I}|$-tuple of elements from $\ZZ[x_{\overline{i}}^{\pm 1}, y_{\overline{i}}^{\pm 1} : \overline{i} \in \overline{I}]$.    

Let $\prinA(\overline{B^{0}}, \overline{t}_{0})$ be the cluster algebra with principal coefficients such that the initial extended cluster is given by 
\begin{eqnarray}
\pi(\tilde{\textbf{x}}) = (x_{\overline{j_{1}}}, \ldots, x_{\overline{j_{r}}}, y_{\overline{j_{1}}}, \ldots, y_{\overline{j_{r}}}).
\end{eqnarray}

By the Laurent phenomenon, 
\begin{eqnarray}
\prinA(B^{0}, t_{0}) \subset \ZZ[x^{\pm 1}_{i}, y_{i}^{\pm 1} : i \in I] \\
\prinA(\overline{B^{0}}, \overline{t}_{0}) \subset  \ZZ[x_{\overline{i}}^{\pm 1}, y_{\overline{i}}^{\pm 1} : \overline{i} \in \overline{I}]
\end{eqnarray}

If $\tilde{B} = (b_{ij}) \in M_{2n, n}(\ZZ)$, then define the quotient matrix $\overline{\tilde{B}} = (\overline{b}_{\overline{i}, \overline{j}})$ using (\ref{def:quotient-matrix}).   (Equivalently, if $\tilde{B} = \left( \begin{array}{c} B \\ C \end{array} \right)$, then $\overline{\tilde{B}} = \left( \begin{array}{c} \overline{B} \\ \overline{C} \end{array} \right)$).  The first $|\overline{I}|$ rows of this matrix are indexed by $\overline{I}$, the next $n$ rows are indexed by $\overline{I}' = \{ \overline{j_{1}} + n, \ldots, \overline{j_{r}}+n \}$ (i.e., the $G$-orbits of $[n + 1, 2n]$), and the columns are indexed by $\overline{I}$.

Define an action of $G$ on  $\ZZ[x^{\pm 1}_{i}, y_{i}^{\pm 1} : i \in I]$ by
\begin{eqnarray}
\sigma x_{i} =  x_{\sigma i} \\
\sigma y_{i} = y_{\sigma i}
\end{eqnarray}
\begin{definition}
A seed $(\tilde{\textbf{v}} = (v_{1}, \ldots, v_{n}, y_{1}, \ldots, y_{n}), \tilde{B})$ in $\prinA(B^{0}, t_{0})$ is \emph{$G$-invariant} if for every $\sigma \in G$, we have $\sigma \tilde{B} = \tilde{B}$ and $\sigma v_{i} = v_{\sigma i}$.
\end{definition}

\begin{lemma}  \label{lemma:g-inv} Let $\Omega$ be a $G$-orbit of $I$,  and let $(\tilde{\textbf{v}}, \tilde{B})$ a seed in $\prinA(B^{0}; t_{0})$ such that $\tilde{B}$ is admissible.  If $(\tilde{\textbf{v}}, \tilde{B})$ is a $G$-invariant seed, then so is $(\tilde{\textbf{v}}', \tilde{B}') =  \mu_{\Omega}(\tilde{\textbf{v}}, \tilde{B})$.
\end{lemma}

\begin{definition}  Let $\tilde{B} = (b_{ij})$ be a $2n \times n$ matrix with automorphism group $G$.  We say that $\tilde{B}$ is \emph{strongly admissible} if it is admissible and satisfies the following conditions:
\begin{itemize}
\item[(1)] For $i, j \in [1, 2n]$ in the same orbit and $\ell \in [1, n]$, $b_{i\ell}, b_{j\ell}$ are both nonnegative or both nonpositive.  
\item[(2)] For each $i, j \in [1, n]$ in the same orbit and $\ell \in [1, 2n]$, $b_{\ell i}, b_{\ell j}$ are both nonnegative or both nonpositive.\end{itemize}
\end{definition}

\begin{lemma}  \label{lemma:proj-mut} Let $\Omega = \overline{k}$ be a $G$-orbit of $I$, and let $(\tilde{\textbf{v}}, \tilde{B})$ be a $G$-invariant seed of $\prinA(B^{0}, t_{0})$ such that $\tilde{B}$ is strongly admissible.  Put $(\tilde{v}', \tilde{B}') = \mu_{\Omega}(\tilde{v}, \tilde{B})$.  Then 
\begin{eqnarray}
(\pi(\tilde{v}'), \overline{\tilde{B}'}) = \mu_{\overline{k}}(\pi(\tilde{\textbf{v}}), \overline{\tilde{B}}),
\end{eqnarray}
i.e., projection commutes with orbit mutation when applied to $(\tilde{\textbf{v}}, \tilde{B})$.
\end{lemma}

\begin{proposition}  \label{prop:projection} Suppose $\Omega_{1}, \ldots, \Omega_{s}$ are $G$-orbits with $\Omega_{i} = \overline{k_{i}}$.  Let 
\begin{eqnarray}
\tilde{B}^{p} = (b^{p}_{ij}) = \mu_{\Omega_{p}}\ldots \mu_{\Omega_{1}}(\tilde{B}^{0}),
\end{eqnarray}
and let $B^{p}$ be the principal part of $\tilde{B}^{p}$.   Suppose that $k_{p}$ is a sink or source in $Q(B^{p - 1})$ for each $p = 1, \ldots, s$.  Put 
\begin{eqnarray}
(\tilde{\textbf{x}}', \tilde{B}') = \mu_{\Omega_{s}} \ldots \mu_{\Omega_{1}}(\tilde{\textbf{x}}, \tilde{B}^{0}).
\end{eqnarray}
Then each $\tilde{B}^{p}$ is strongly admissible, and 
\begin{eqnarray}
(\pi(\tilde{\textbf{x}}'), \overline{\tilde{B}'}) =  \mu_{\overline{k_{1}}}\ldots \mu_{\overline{k_{s}}}(\pi(\tilde{\textbf{x}}), \overline{\tilde{B}^{0}}).
\end{eqnarray}
Thus, with the notation of Theorem \ref{thm:fpoly-proj}, the seed at $\overline{t}$ in $\prinA(\overline{B^{0}}, \overline{t}_{0})$ can be obtained by applying the projection to the seed $(\tilde{\textbf{x}}', \tilde{B}')$ in $\prinA(B^{0}; t_{0})$.
\end{proposition}
\begin{proof}
We only need to prove that each $\tilde{B}^{p}$ is strongly admissible.  Once this is done, the proposition follows from Lemmas \ref{lemma:g-inv} and \ref{lemma:proj-mut} by induction on $s$, the number of orbit mutations.

First, we check that $Q(B^{p})$ is acyclic.  If $k_{p}$ is a sink in $Q(B^{p - 1})$, then $b_{i, k_{p}}^{p - 1} \leq 0$ and $b_{k_{p}, j}^{p - 1} \geq 0$ for all $i, j \in [1, n]$.  If $k_{p}$ is a source in $Q(B^{p - 1})$, then $b_{i, k_{p}}^{p - 1} \geq 0$ and $b_{k_{p}, j}^{p - 1} \leq 0$ for all $i, j \in [1, n]$.  It follows that 
\begin{eqnarray}
b_{ij}^{p} = \left\{ \begin{array}{ll} 
                         -b_{ij}^{p - 1} & \mbox{ if } i = k_{p} \mbox{ or } j = k_{p} \\
                          & \\
                         b_{ij}^{p - 1} & \mbox{ otherwise.}
             \end{array} \right.
\end{eqnarray}
That is, $Q(B^{p})$ is obtained from $Q(B^{p - 1})$ by reversing the arrows which contain the vertex $k_{p}$.  By an inductive argument, it is easy to show that $Q(B^{p})$ is acyclic.  

Let $i, j \in I$ such that $i, j$ are in the same $G$-orbit.  Then $j = \sigma i$ for some $\sigma \in G$.   Since $G$ is a finite group, there exists some $r > 0$ such that $\sigma^{r} = 1$.

If $i \rightarrow j$ in $Q(B^{p})$, then the $G$-invariance of $\tilde{B}$ implies that the following cycle occurs in $Q(B^{p})$:
\begin{eqnarray}  
i \rightarrow \sigma i \rightarrow \sigma^{2} i \rightarrow \cdots \rightarrow \sigma^{r}i = i.
\end{eqnarray}
Thus, $i \nrightarrow j$ in $Q(B^{p})$, which proves that $B^{p}$ is admissible.

Let $\ell \in [1, n]$.  If $i \rightarrow \ell \rightarrow j$ in $Q(B^{p})$, then the following cycle occurs in $Q(B^{p})$:
\begin{eqnarray}
i \rightarrow \ell \rightarrow \sigma i \rightarrow \sigma \ell \rightarrow \sigma^{2} i \rightarrow \sigma^{2} \ell \rightarrow \cdots \rightarrow \sigma^{r - 1} \ell \rightarrow \sigma^{r} i = i.
\end{eqnarray}
Thus, the path $i \rightarrow \ell \rightarrow j$ cannot occur in $Q(B^{p})$, and by similar reasoning, $j \rightarrow \ell \rightarrow i$ cannot occur in $Q(B^{p})$.  This forces $b_{i\ell}^{p}, b_{j\ell}^{p}$ to be both nonnegative or both nonpositive, and the same thing is true of $b_{\ell i}^{p}, b_{\ell j}^{p}$.

From \cite{qps2}, it is known that every $F$-polynomial $F^{B^{0}; t_{0}}_{j; t}$ has constant term 1 (under the assumption that $B^{0}$ is skew-symmetric).  From \cite[Proposition 5.6]{coefficients}, it follows that the entries in the bottom $n$ rows of the $\ell$th column of $\tilde{B}^{p}$ are all nonnegative or nonpositive.  This shows condition (1) for the strong admissibility of $\tilde{B}^{p}$ is true.  

Finally, suppose that $\ell \in [n + 1, 2n]$.  Then $b^{p}_{\ell j} = b^{p}_{\ell, \sigma i} = b^{p}_{\sigma^{-1}\ell, i}$.  Since $b^{p}_{\sigma^{-1}\ell, i}$, $b^{p}_{\ell i}$ are both nonnegative or both nonpositive, this proves condition (2) for strong admissibility. 
\end{proof}

\begin{proposition} \label{prop:projection2} Let $\Omega_{i}$ be $G$-orbits, and let $t \in \mathbb{T}_{n}$, $\overline{t} \in  \overline{\mathbb{T}_{n}}$, with the same assumptions as in Theorem \ref{thm:fpoly-proj}.    Let $j \in [1, n]$, and write $x_{\overline{j}; \overline{t}}$, $x_{j; t}$ for the cluster variables in  $\prinA(B^{0}; t_{0})$ and  $\prinA(\overline{B}^{0}; \overline{t}_{0})$, respectively.  Then
\begin{eqnarray}
x_{\overline{j}; \overline{t}} = \pi(x_{j; t}).
\end{eqnarray}
\end{proposition}
\begin{proof} The proposition follows immediately from Proposition \ref{prop:projection}.
\end{proof}

Finally, the proof of Theorem \ref{thm:fpoly-proj} may be concluded.  By (\ref{eq:cluster-var-fpoly}),
\begin{eqnarray}
x_{j; t} = F_{j; t}^{B^{0}; t_{0}}(\hat{y}_{1}, \ldots, \hat{y}_{n})x_{1}^{g_{1}}\ldots x_{n}^{g_{n}}, 
\end{eqnarray}
where $\gg_{j; t}^{B^{0}; t_{0}} = (g_{1}, \ldots, g_{n})$.  Applying $\pi$ to both sides, we get
\begin{eqnarray}
x_{\overline{j}; \overline{t}} = \pi(F_{j; t}^{B^{0}; t_{0}}(\hat{y}_{1}, \ldots, \hat{y}_{n}))x_{\overline{1}}^{g_{1}}\ldots x_{\overline{n}}^{g_{n}}.
\end{eqnarray}
Let 
\begin{eqnarray}
\hat{\hat{y}}_{j} = y_{\overline{j}}\prod_{\overline{i} \in \overline{I}} x_{\overline{i}}^{\overline{b}^{0}_{\overline{i}, \overline{j}}}.
\end{eqnarray}
These are the $\hat{y}_{j}$ elements corresponding to $\prinA(\overline{B^{0}}; \overline{t}_{0})$ (see (\ref{def:yhat})).  It is straightforward to check that $\pi(\hat{y}_{j}) = \hat{\hat{y_{j}}}$ for $j \in [1, n]$.  Thus,
\begin{eqnarray}
x_{\overline{j}; \overline{t}} = \pi(F_{j; t}^{B^{0}; t_{0}})(\hat{\hat{y}}_{1}, \ldots, \hat{\hat{y}}_{n})\prod_{\overline{j} \in \overline{I}}x_{\overline{j}}^{\sum_{\ell \in \overline{j}} g_{\ell}}.
\end{eqnarray}
From \cite{qps2}, it is known that $F_{j; t}^{B^{0}; t_{0}}$ has constant term 1, so the same is true of $\pi(F_{j; t}^{B^{0}; t_{0}})$.  The theorem follows from Proposition \ref{thm:cluster-var-formula}.
\end{proof}

The following result is proven in \cite{yangzel}:

\begin{theorem} \label{thm:sinks} If $B^{0}$ is of finite type, then every cluster variable in $\prinA(B^{0}; t_{0})$ may be obtained from the initial cluster by a sequence of mutations, each of which is either a sink or seed mutation. 
\end{theorem}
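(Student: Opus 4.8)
The plan is to exhibit, starting from the initial seed of $\prinA(B^{0}; t_{0})$, a single infinite chain of mutations in which every step is a sink-or-source mutation and which visits every cluster variable, and then to invoke the finite-type classification for the exhaustion part. Since $B^{0}$ is acyclic, fix an ordering $i_{1}, \dots, i_{n}$ of $[1,n]$ admissible for $Q^{0}$, meaning every arrow $i_{a} \to i_{b}$ of $Q^{0}$ has $a < b$; then $i_{1}$ is a source and $i_{n}$ a sink of $Q^{0}$. The first --- and essentially the only elementary --- point is that the mutation sequence $\mu_{i_{1}}, \mu_{i_{2}}, \dots, \mu_{i_{n}}$, applied in this order, mutates at a source of the current quiver at every step and returns the quiver to $Q^{0}$. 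Indeed, as in the proof of Proposition \ref{prop:projection}, mutation at a source $k$ merely reverses the arrows incident to $k$ and leaves the rest of the quiver unchanged --- for $i,j \neq k$ the term $\sgn(b_{ik})[b_{ik}b_{kj}]_{+}$ in \eqref{eq:matrix-mutation} vanishes because $b_{ik} \geq 0$ and $b_{kj} \leq 0$ when $k$ is a source. An induction on $a$ then shows that after $\mu_{i_{a-1}}\cdots \mu_{i_{1}}$ the vertex $i_{a}$ has become a source (its only original in-neighbors lie among $i_{1},\dots,i_{a-1}$, and the arrow from each such vertex to $i_{a}$ has been reversed exactly once), and after the whole pass every arrow of $Q^{0}$ has been reversed exactly twice --- once at each endpoint --- with all multiplicities preserved, so the quiver is $Q^{0}$ again. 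Hence the periodic sequence of directions $i_{1}, \dots, i_{n}, i_{1}, \dots, i_{n}, \dots$ produces an infinite chain of seeds $S^{(0)} = (\tilde{\xx}_{t_{0}}, \tilde{B}^{0})$, $S^{(1)}$, $S^{(2)}, \dots$ in which every step is a sink-or-source mutation.

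It remains to see that this chain visits every cluster variable. Here I would lean on the finite-type classification of \cite{ca2}: because $B^{0}$ is of finite type, the cluster algebra has only finitely many cluster variables, in bijection with the almost positive roots $\Phi_{\geq -1}$ of the associated root system, the initial cluster variables corresponding to the negative simple roots. Moreover this combinatorial datum --- the exchange graph, and therefore both the set of vertices of $\mathbb{T}_{n}$ reachable from $t_{0}$ by sink/source mutations and the identifications among cluster variables --- is independent of the coefficients, so it suffices to argue in the coefficient-free pattern and transfer the conclusion to $\prinA(B^{0}; t_{0})$. In that pattern each $\mu_{i}$ realizes a piecewise-linear involution $\tau_{i}$ of $\Phi_{\geq -1}$ (the operations of Fomin and Zelevinsky), and $\tau_{i_{n}}\cdots \tau_{i_{1}}$ for an admissible order is conjugate to the Coxeter ``rotation'' of $\Phi_{\geq -1}$, whose orbits are known to exhaust $\Phi_{\geq -1}$; in the bipartite case, ordering sources before sinks, this is precisely the alternating $\mu_{+}\mu_{-}$ dynamics of Fomin and Zelevinsky's work on $Y$-systems and generalized associahedra. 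The chain is eventually periodic (there are finitely many seeds), and tracking which cluster variable occupies vertex $i_{a+1}$ just before the $(jn+a)$-th step of $S^{(\bullet)}$, one finds these range over all of $\Phi_{\geq -1}$; since the chain uses only sink/source mutations, every cluster variable of $\prinA(B^{0}; t_{0})$ occurs in a seed reached from the initial seed by sink/source mutations.

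The first step above is completely elementary, and it is really that observation --- combined with the already-available finite-type classification --- that powers the theorem. The main obstacle is the second step: phrased as ``the periodic Coxeter-mutation chain exhausts $\Phi_{\geq -1}$'' it is close to a restatement of the theorem itself, so a genuinely self-contained proof would have to reproduce a fair amount of finite-type machinery (the root-system parametrization of cluster variables, the compatibility of the $\tau_{i}$ with mutation, and the orbit structure of the Coxeter rotation on $\Phi_{\geq -1}$). An alternative packaging trades the admissible-order bookkeeping for a reorientation lemma: one first uses sink/source reversals to bring $Q^{0}$ to the bipartite orientation of its Dynkin diagram --- possible because any two acyclic orientations of a tree are connected by source/sink reversals, via an easy induction that peels off a leaf and simulates a reversal at the leaf's neighbor by two reversals in the larger tree --- and then quotes the bipartite dynamics directly.
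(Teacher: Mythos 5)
The paper itself offers no argument for Theorem \ref{thm:sinks}: it is quoted outright from \cite{yangzel}, so there is no internal proof to compare yours against. Your elementary step is correct and is the same computation the paper performs inside the proof of Proposition \ref{prop:projection}: if $k$ is a source then $b_{ik}b_{kj}\leq 0$ for all $i,j$, so mutation at $k$ only reverses the arrows at $k$; hence for an admissible ordering $i_1,\dots,i_n$ each $\mu_{i_a}$ in the pass is a source mutation and the full pass returns the quiver to $Q^0$.

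The substance of the theorem, however, is the exhaustion claim, and there your sketch outlines the known proofs rather than supplying one. Asserting that $\tau_{i_n}\cdots\tau_{i_1}$ is conjugate to the Coxeter rotation of $\Phi_{\geq -1}$ and that its dynamics sweeps out all of $\Phi_{\geq -1}$ is essentially the content of the Yang--Zelevinsky theorem being proved; moreover, as phrased it is slightly off, since individual orbits of the rotation do not exhaust $\Phi_{\geq -1}$ --- the correct statement is that every $\langle \tau_+,\tau_-\rangle$-orbit meets the negative simple roots, so that the $n$ tracks of the belt together cover $\Phi_{\geq -1}$ (your later sentence about tracking the variable at each vertex has it right). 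Your alternative packaging is the cleaner honest route: sink/source reversals connect any two acyclic orientations of a tree, so one first mutates the initial seed to a bipartite one by sink/source mutations and then quotes the Fomin--Zelevinsky bipartite-belt exhaustion in finite type. If you pursue that, the transfer from the coefficient-free statement to $\prinA(B^{0}; t_{0})$ needs a word: the cheapest justification here is via denominator vectors, which are coefficient-independent and which, by the result of \cite{yangzel} already quoted in Section \ref{fpoly-classical}, parametrize the non-initial cluster variables of the principal-coefficients algebra. Either way the argument rests on quoted finite-type machinery of the same depth as the paper's citation, so, as you candidly acknowledge, your proposal does not produce a more self-contained proof than citing \cite{yangzel}; it does, though, correctly isolate the one genuinely elementary ingredient (the source-ordered pass) and a viable reduction to the bipartite case.
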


\subsection{Type $\mbox{C}_{n}$}
In this subsection, we use the results of the previous subsection to finish the proof of Theorem \ref{thm:fpoly-classical}.  Let $\overline{B^{0}} = (\overline{b}_{ij})$ be an $n \times n$ acyclic exchange matrix of type $\mbox{C}_{n}$.  Let $G$ be the subgroup of $S_{2n - 1}$ generated by the involution $\sigma$, where $\sigma(i) = 2n - i$ for all $i \in [1, 2n - 1]$.  Then $G$ is a group of automorphisms for the matrix $B^{0} = (b_{ij})$ of type $\mbox{A}_{2n - 1}$ whose entries are given below: 
\begin{eqnarray}
b_{ij} & = & b_{2n - i, 2n - j} = \overline{b}_{ij} \mbox{ if } 1 \leq i, j \leq n, (i, j) \neq (n - 1, n) \\
b_{n - 1, n} & = & b_{n + 1, n} = \mbox{sgn}(\overline{b}_{n - 1, n}) \in \{ +1, -1 \} \\
b_{ij} & = & b_{2n - i, 2n - j} = 0 \mbox{ if } n + 1 \leq i \leq 2n - 1 \mbox{ and } 1 \leq j \leq n - 1 
\end{eqnarray}
The matrix $\overline{B^{0}}$ is indeed the quotient matrix of $B^{0}$, which justifies the choice of notation.   

\begin{lemma} \label{lemma:typebc-proj} Suppose the cluster variable $x \in \prinA(B^{0})$ has denominator vector $\dd'$.  Then  $\pi(x)$ is a cluster variable in $\prinA(\overline{B^0})$ with denominator vector $\overline{\dd'}$.  
\end{lemma}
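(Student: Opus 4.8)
The plan is to combine Proposition~\ref{prop:projection2} with Theorem~\ref{thm:sinks} to reduce the statement to a check at the level of denominator vectors. First I would invoke Theorem~\ref{thm:sinks}: since $B^{0}$ is of type $\mbox{A}_{2n-1}$ (hence finite type), the cluster variable $x$ may be reached from the initial cluster by a sequence of sink/source mutations; say $x = x_{j;t}$ where $t$ is obtained from $t_{0}$ by such a sequence. Because $G = \langle \sigma \rangle$ and $\sigma$ fixes the vertex $n$ while swapping $i \leftrightarrow 2n-i$, every $G$-orbit in $I = [1,2n-1]$ is either a singleton $\{n\}$ or a pair $\{i, 2n-i\}$. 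A sink or source at a vertex $i \ne n$ in an acyclic $Q(B^{p-1})$ is automatically a sink or source at $\sigma i = 2n-i$ as well (by $G$-invariance of the matrices, which holds along the whole path by Lemma~\ref{lemma:g-inv}); and mutating at $i$ followed by $\sigma i$ is exactly the orbit mutation $\mu_{\Omega}$ with $\Omega = \{i, 2n-i\}$. So after possibly regrouping and reordering the mutations in the path (legitimate since each relevant pair of vertices is non-adjacent in the acyclic quiver, so the two mutations commute), $t$ can be taken to lie on a path of orbit mutations of exactly the form required by Theorem~\ref{thm:fpoly-proj} and Proposition~\ref{prop:projection2}.

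Once $t$ is of this form, Proposition~\ref{prop:projection2} gives directly that $\pi(x) = \pi(x_{j;t}) = x_{\overline{j};\overline{t}}$, which is a cluster variable in $\prinA(\overline{B^{0}})$ (note $\overline{B^{0}}$ is the type $\mbox{C}_{n}$ matrix, matching the hypothesis, by the identification recorded just before the lemma). This settles the first assertion. For the denominator vectors, I would apply $\pi$ to the Laurent expression for $x_{j;t}$ in terms of the initial cluster of $\prinA(B^{0})$: if $x_{j;t} = N(x_{1},\ldots,x_{2n-1})/(x_{1}^{d'_{1}}\cdots x_{2n-1}^{d'_{2n-1}})$ with $N$ divisible by no $x_{i}$, then, since $\pi$ sends $x_{i}\mapsto x_{\overline{i}}$ and $y_{i}\mapsto y_{\overline{i}}$, we get
\begin{eqnarray}
\pi(x_{j;t}) = \frac{\pi(N)}{\prod_{\overline{i}\in\overline{I}} x_{\overline{i}}^{\,\sum_{\ell\in\overline{i}} d'_{\ell}}},
\end{eqnarray}
so the exponent vector of the denominator of $\pi(x)$ is exactly $\overline{\dd'}$, the quotient of $\dd'$, \emph{provided} $\pi(N)$ is still not divisible by any $x_{\overline{i}}$.

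The main obstacle is precisely this last point: a priori, the projection $\pi$ could introduce cancellation or a common factor of some $x_{\overline{i}}$ into $\pi(N)$, which would make the naive denominator vector of $\pi(x)$ strictly smaller than $\overline{\dd'}$. I would handle this by using $G$-invariance of the whole picture. By Lemma~\ref{lemma:g-inv} (applied along the orbit-mutation path), the seed at $t$ is $G$-invariant, so $N$ is, up to the monomial denominator, a $G$-invariant Laurent polynomial; concretely, $x_{j;t}$ and $x_{\sigma j;t}$ have denominator vectors related by $\sigma$, i.e.\ $d'_{\sigma i} = d'_{i}$ for all $i$, and correspondingly $N$ is $\sigma$-symmetric. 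If $x_{\overline{i}}$ divided $\pi(N)$ for some orbit $\overline{i} = \{i, 2n-i\}$, then writing $N$ in terms of its monomials and tracking how $\sigma$-symmetric monomials collapse under $\pi$, one sees the lowest power of $x_{i}$ (equivalently of $x_{2n-i}$) appearing in $N$ would have to be at least $1$, contradicting that $N$ is divisible by no $x_{i}$; the singleton orbit $\{n\}$ is even easier since $\pi$ acts trivially there. Thus $\pi(N)$ is divisible by no $x_{\overline{i}}$, and the denominator vector of $\pi(x)$ is exactly $\overline{\dd'}$, completing the proof. It may be cleaner to phrase this cancellation argument via the $F$-polynomial: $x_{j;t} = F_{j;t}(\hat{y})x_{1}^{g_{1}}\cdots x_{2n-1}^{g_{2n-1}}$, apply $\pi$, use $\pi(\hat y_i) = \hat{\hat y}_i$ and the fact (from \cite{qps2}) that $F$-polynomials have constant term $1$ so that $\pi(F_{j;t})$ also has constant term $1$ and in particular is not divisible by any $u_{\overline{i}}$, and then read off the denominator vector from the $\gg$-vector formula of Theorem~\ref{thm:g-vec-classical} together with the compatibility $\overline{\gg_{\dd'}} = \gg_{\overline{\dd'}}$ coming from Theorem~\ref{thm:fpoly-proj}; I would check which of these two routes is shorter when writing the details.
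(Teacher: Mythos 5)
Your first assertion --- that $\pi(x)$ is again a cluster variable --- is argued exactly as in the paper, via Theorem \ref{thm:sinks} and Proposition \ref{prop:projection2}, and your elaboration about regrouping a sink/source mutation sequence into orbit mutations is consistent with what the paper (implicitly) does. The problem is in the second half, where you treat the denominator vector.

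Your key claim is that if $x_{\overline{i}}$ divided $\pi(N)$ for an orbit $\overline{i}=\{i,2n-i\}$, then the lowest power of $x_i$ occurring in $N$ would be at least $1$, contradicting primitivity of $N$. That implication is false. Since $\pi$ identifies $x_i$ with $x_{2n-i}$, divisibility of $\pi(N)$ by $x_{\overline{i}}$ only forces every monomial of $N$ to involve $x_i$ \emph{or} $x_{2n-i}$ (and even this only up to possible cancellation coming from identifying the other variable pairs and the $y$-pairs, which you have not excluded, having invoked no positivity). The $\sigma$-symmetry of $N$ does not rescue the argument: $x_i+x_{2n-i}$ is $\sigma$-symmetric and divisible by neither variable, yet its projection $2x_{\overline{i}}$ is divisible by $x_{\overline{i}}$. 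So what your computation actually gives for free is only the inequality that the denominator vector of $\pi(x)$ is componentwise $\leq\overline{\dd'}$; excluding a strict drop is precisely the content of the lemma, and your local, monomial-by-monomial argument does not supply it. (Your alternative route through $F$-polynomials has the same defect: constant term $1$ of $\pi(F_{j;t})$ again only yields the inequality, because the relevant minima of $(\overline{B^0}\,\overline{\ee}+\overline{\gg})_{\overline{i}}$ over the projected support need not split as sums of minima.) The paper closes this gap by a global argument instead: setting $\psi(\dd')$ to be the denominator vector of $\pi(x)$, it records exactly the two facts you have, namely $\psi(\dd')\leq\overline{\dd'}$ and $\psi(\sigma\dd')=\psi(\dd')$, adds the counting observation that the number of $\sigma$-orbits in $\Phi_{+}(B^{0})$ equals $|\Phi_{+}(\overline{B^{0}})|$ (so $\psi$ identifies vectors only within $\sigma$-orbits), and then proves $\psi(\dd')=\overline{\dd'}$ by induction along the natural partial order on $\Phi_{+}(B^{0})$, starting from $\psi(\ee_i)=\ee_i$: a strict drop would force $\psi(\dd')$ to coincide with $\psi(\dd'')$ for some smaller $\dd''$, which the injectivity-up-to-$\sigma$ forbids. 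Some such global counting or injectivity input is needed; without it your proposal does not establish the denominator-vector statement.
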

\begin{proof} The fact that $\pi(x)$ is a cluster variable in $\prinA(\overline{B^{0}})$ follows from Proposition \ref{prop:projection2}  and Theorem \ref{thm:sinks}.  Define the function $\psi: \Phi_{+}(B^{0}) \rightarrow \Phi_{+}(\overline{B^{0}})$ in the following way: if $x \in \prinA(B^{0})$ has denominator vector $\dd' \in \Phi_{+}(B^{0})$, then let $\psi(\dd')$ be the denominator vector of the cluster variable $\pi(x)$.   Thus, we must show that 
\begin{eqnarray} \label{eqn:overlinedd}
\psi(\dd') = \overline{\dd'}.
\end{eqnarray}
Clearly, $\psi(\dd') \leq \overline{\dd'}$ and $\psi(\dd') = \psi(\sigma(\dd'))$.   It is easy to verify that the number of $\sigma$-orbits of $\Phi_{+}(B^{0})$ is equal to $|\Phi_{+}(\overline{B^{0}})|$.  It follows that $\psi(\dd') = \psi(\dd'')$ if and only if $\dd' = \dd''$ or $\dd' = \sigma(\dd'')$.

We use the natural ordering on $\Phi_{+}(B^{0})$ to prove (\ref{eqn:overlinedd}).   The least elements of $\Phi_{+}(B^{0})$ are $\ee_{i}$ $(i \in [1, n])$.  Since $\psi(\ee_{i}) \leq \ee_{i}$, equality must hold.  Now suppose that $\dd' \in \Phi_{+}(B^{0})$ such that $\psi(\dd'') = \overline{\dd''}$ for all $\dd'' \in \Phi_{+}(B^{0})$ satisfying $\dd'' < \dd'$.    If $\dd' = \ee_{i} + \cdots + \ee_{j}$ for some $1 \leq i < j \leq n$, then any denominator vector in  $\Phi_{+}(\overline{B^{0}})$ that is less than $\dd'$ is of the form $\ee_{i'} + \cdots + \ee_{j'}$, where $i \leq i' \leq j' \leq j$, and this is equal to $\psi(\ee_{i'} + \cdots + \ee_{j'})$ by assumption.  Thus, (\ref{eqn:overlinedd}) must hold in this case.  If $\dd' = \ee_{i} + \cdots + \ee_{2n - j}$ for some $1 \leq i < j \leq n - 1$, then any denominator vector in $\Phi_{+}(\overline{B^{0}})$ that is less than $\overline{\dd'} = \ee_{i} + \cdots + 2\ee_{j} + \cdots + 2\ee_{n - 1} + \ee_{n}$ is either of the form $\ee_{i'} + \cdots + \ee_{j'}$ or of the form $\ee_{i'} + \cdots + 2\ee_{j'} + \cdots + 2\ee_{n - 1} + \ee_{n}$, where $i \leq i'$ and $j' \geq j$.  The former expression is equal to $\psi(\ee_{i'} + \cdots + \ee_{j'})$, while the latter is equal to $\psi(\ee_{i'} + \cdots + \ee_{2n - j'})$.  Again, (\ref{eqn:overlinedd}) holds.  The remaining denominator vectors in $\Phi_{+}(B^{0})$ are obtained from the ones already mentioned by applying $\sigma$, so the result again follows by using the fact that $\psi(\sigma(\dd')) = \psi(\dd')$ for $\dd' \in \Phi_{+}(B^{0})$.
\end{proof}

To calculuate $F^{\overline{B^{0}}; \overline{t}_{0}}_{\dd}$ for $\dd \in \Phi_{+}(\overline{B^{0}})$, we need to use the formula for $F_{\dd'}^{B^{0}; t_{0}}$ for $\dd' \in \Phi_{+}(B^{0})$ such that $\overline{\dd'} = \dd$, and then apply the projection $\pi$.  In the current setting, $\pi(u_{i}) = \pi(u_{2n - i}) = u_{i}$ for $i \in [1,n]$.  

Let $\dd = (d_{1}, \ldots, d_{n}) \in \Phi_{+}(B^{0})$.  If $\dd = \sum_{i = p}^{r} \ee_{i}$ for some $1 \leq p \leq r \leq n$, then we can let $\dd' = \dd$.  In this case, 
\begin{eqnarray}
F^{B^{0}; t_{0}}_{\dd} = \pi(F_{\dd}^{B^{0}; t_{0}}).
\end{eqnarray}
It is easy to verify the theorem works in this case.

Now suppose that $\dd = \sum_{i = p}^{n} \ee_{i} + \sum_{i = r}^{n - 1} \ee_{i}$ for some $1 \leq p \leq r \leq n - 1$.  Then we can take $\dd' = \sum_{i = p}^{2n - r} \ee_{i} = (d_{1}', \ldots, d_{2n - 1}').$  Let $\ee = (e_{1}, \ldots, e_{n})$ such that $u_{1}^{e_{1}}\ldots u_{n}^{e_{n}}$ occurs with nonzero coefficient in $\pi(F_{\dd'}^{B^{0}; t_{0}})$.    Then there exists $(e_{1}', \ldots, e_{2n - 1}') \in \ZZ^{2n - 1}$ such that $u_{1}^{e_{1}'}\ldots u_{n}^{e_{2n - 1}'}$ occurs with nonzero coefficient in $F_{\dd'}^{B^{0}; t_{0}}$ and $\pi(u_{1}^{e_{1}'}\ldots u_{n}^{e_{2n - 1}'}) = u_{1}^{e_{1}}\ldots u_{n}^{e_{n}}$ (i.e., $e_{i} = e_{i}' + e_{2n - i}'$ for $i \in [1, n - 1]$ and $e_{n} = e_{n}'$). 

First, we will check that properties (1)-(4) in Theorem \ref{thm:fpoly-classical} hold for $u_{1}^{e_{1}}\ldots u_{n}^{e_{n}}$.   (1) is immediate from the fact that (1) holds for $F_{\dd'}^{B^{0}; t_{0}}$.   

Let $Q^{0} = Q(B^{0})$ and $\overline{Q^{0}} = Q(\overline{B^{0}})$.   Observe that $i \rightarrow j$ in $\overline{Q^{0}}$ if and only if $i \rightarrow j$ and $2n - i \rightarrow 2n - j$ in $Q^{0}$.  Also, if $i \rightarrow j$ in $Q^{0}$ and $d_{i}' \leq d_{j}'$, then the fact that arrows in $Q^{0}$ are acceptable implies that $e_{i}' \leq e_{j}'$.

For (4), part (a), if $n \rightarrow n - 1$ in $\overline{Q^{0}}$ and $e_{n} = 1$, then $e_{n}' = 1$ implies that $e_{n - 1}' = 1$, $e_{n + 1}' = 1$.  Thus, $e_{n - 1} = e_{n - 1}' + e_{n + 1}' = 2$.  For part (b), suppose that $n - 1 \rightarrow n$ in $\overline{Q^{0}}$ and $e_{n - 1} \geq 1$.  Then $e_{n - 1}' = 1$ or $e_{n + 1}' = 1$.  Since $e_{n}' \geq e_{n - 1}'$, $e_{n}' \geq e_{n + 1}'$, part (b) follows.

For (3), observe that (4) implies that the arrow connecting $n - 1$ and $n$ in $\overline{Q^{0}}$ cannot be critical, so there is only one possible critical arrow which may occur between $r - 1$ and $r$.  

For (2), let $i \rightarrow j$ in $\overline{Q^{0}}$.  If $i, j$ are both in $[1, p - 1]$, both in $[p, r - 1]$, or both in $[r, n - 1]$, then $d_{i}' = d_{j}'$, $d_{2n - i}' = d_{2n - j}'$, and $d_{i} = d_{j}$.  Since $i \rightarrow j$ and $2n - i \rightarrow 2n - j$ in $Q^{0}$, it follows that $e_{i}' - e_{j}' \leq 0$ and $e_{2n - i}' - e_{2n - j}' \leq 0$, which means that $e_{i} - e_{j} \leq 0 = [d_{i} - d_{j}]_{+}$.   If $i = p - 1$ or $j = p - 1$, then use the fact that if $d_{i} = 0$ or $d_{j} = 0$, then condition (2) is automatically satisfied.   Now suppose that $r > p$ and $\{i, j \} = \{ r - 1, r \}$.  Note that $d_{r - 1}' = 1$, $d'_{2n - r + 1} = 0$, $d_{r}' = 1$, $d'_{2n - r} = 1$, $d_{r - 1} = 1$, $d_{r} = 2$.  If $r - 1 \rightarrow r$ in $\overline{Q}^{0}$, then $e'_{r - 1} - e_{r}' \leq 0$, $e_{2n - r + 1}' - e_{2n - r}' \leq 0$, so $e_{r - 1} - e_{r} \leq 0$.  If $r \rightarrow r - 1$ in $\overline{Q}^{0}$, then $e_{r}' - e_{r - 1}' \leq 0$, $e_{2n - r}' - e_{2n - r + 1}' \leq 1$, so $e_{r} - e_{r - 1} \leq 1$.  Finally, if $\{ i, j \} = \{ n - 1, n \}$, then (4) implies that $i \rightarrow j$ is acceptable.  

Now assume that $\ee = (e_{1}, \ldots, e_{n}) \in \ZZ^{n}$ satisfies conditions (1)-(4).   Since all the coefficients in $F^{B^{0}; t_{0}}_{\dd'}$ are either 0 or 1, we need to construct all $\ee' = (e_{1}', \ldots, e_{2n - 1}') \in \ZZ^{n}_{\geq 0}$ such that $u_{1}^{e_{1}'}\ldots u_{n}^{e_{n}'}$ occurs with nonzero coefficient in $F^{B^{0}; t_{0}}_{\dd'}$, $e_{i} = e_{i}' + e_{2n - i}'$ for $i \in [1, n - 1]$, and $e_{n} = e_{n}'$.  

If $e_{i} = 0$, then this forces $e_{i}' = e_{2n - i}' = 0$.  If $e_{i} = d_{i}$, then we must have $e_{i}' = d_{i}'$ and $e_{2n - i}' = d_{2n - i}'$.  Thus, if $(d_{i}, e_{i}) \neq (2, 1)$, then there is only one possible choice for $e_{i}'$ and $e_{2n - i}'$.  

Let $S$ be the subgraph of $\overline{Q^{0}}$ induced by the vertex set $ \{ i \in [1, n] : (d_{i}, e_{i}) = (2, 1) \}$.  If $i, j$ are vertices in $S$ such that $i \rightarrow j$ in $\overline{Q^{0}}$, then $e_{i}' \leq e_{j}'$ and $e_{2n - i}' \leq e_{2n - j}'$.  It follows that for each component $C$ of the graph $S$, exactly one of the following possibilities holds:  either $e_{i}' = 1$ and $e_{2n - i}' = 0$ for all vertices $i$ in $C$, or $e_{i}' = 0$ and $e_{2n - i}' = 1$ for all vertices $i$ in $C$.  Thus, there are two possible choices for each such $C$, except in the case where $r$ is a vertex of $C$ and $(r - 1, r)$ is a critical pair.  In this case, we must have $e_{r}' = 1$ and $e_{2n - r}' = 0$ if $r - 1 \rightarrow r$ in $\overline{Q^{0}}$, or $e_{r}' = 0$ and $e_{2n - r}' = 1$ if $r \rightarrow r - 1$ in $\overline{Q^{0}}$.  

Now suppose that $\textbf{e}' = (e_{1}', \ldots, e_{2n - 1}')$ is chosen as above.  It remains to verify that all arrows $i \rightarrow j$ in $Q^{0}$ are acceptable with respect to $\textbf{e}'$ and $\textbf{d}'$.    If $i, j \in [r, n - 1]$ and both $i, j$ are vertices in $S$, note that $i \rightarrow j$ and $2n - i \rightarrow 2n - j$ are both acceptable by construction

Let $i \rightarrow j$ in $\overline{Q^{0}}$.  We will prove that $i \rightarrow j$ and $2n - i \rightarrow 2n - j$ are acceptable arrows in $Q^{0}$.   If $i, j \in [1, p]$, then Proposition \ref{prop:acceptable} implies that these arrows are acceptable.  If $i, j \in [p, r - 1]$, then $e_{2n - i}' = e_{2n - j}' = 0$, so $2n - i \rightarrow 2n - j$ is acceptable.  Also, $e_{i} = e_{i}'$, $e_{j} = e_{j}' $, so the fact that $i \rightarrow j$ is acceptable in $\overline{Q^{0}}$ implies that the arrow is acceptable in $Q^{0}$.

Suppose that $i$ is in $[r, n - 1]$ but is not a vertex of $S$.   If $e_{i} = 0$, then $e_{i}' = e_{2n - i}' = 0$. If $e_{i} = 2$, then $e_{j} = d_{j}$, so $e_{j}' = d_{j}'$ and $e_{2n - j}' = d_{2n - j}'$.  In either case, Proposition \ref{prop:acceptable} implies that $i \rightarrow j$ and $2n - i \rightarrow 2n - j$ are acceptable arrows in $Q^{0}$.  The argument in the case that $j$ is in $[r, n - 1]$ but not in $S$ is similar.

By Proposition \ref{prop:acceptable}, the arrow between $2n - r + 1$ and $2n - r$ is acceptable since $d_{2n - r + 1}' = 0$.  Next, we need to consider the arrow in $Q^{0}$ between the vertices $r - 1$ and $r$.   If $(r - 1, r)$ is a critical pair in $\overline{Q^{0}}$, note that the construction implies that $r - 1 \rightarrow r$ or $r \rightarrow r - 1$ in $Q^{0}$.    If $r - 1 \rightarrow r$ in $Q^{0}$, $e_{r} = 1$, and $(r - 1, r)$ is not a critical pair, then $e_{r - 1} = 0$, so $e_{r - 1}' = 0$.  This means that $r - 1 \rightarrow r$ in $Q^{0}$ is acceptable.  If $r \rightarrow r - 1$ in $Q^{0}$, $e_{r} = 1$ and $(r - 1, r)$ is not critical, then $e_{r - 1} = 1 = e_{r - 1}' = d_{r - 1}'$, so $r \rightarrow r - 1$ is acceptable in $Q^{0}$.  

Finally, we consider the arrow in $\overline{Q^{0}}$ between $n - 1$ and $n$.    If $n \rightarrow n - 1$ in $Q^{0}$, then $e_{n - 1} = 1$, then (4) implies that $e_{n}' = 0$.  If $n - 1 \rightarrow n$ in $\overline{Q^{0}}$ and $e_{n - 1} = 1$, then (4) implies that $e_{n}' = 1 = d_{n}'$.  Thus, in either case, the arrow between $n - 1$ and $n$ and the arrow between $n + 1$ and $n$ are both acceptable in $Q^{0}$.  

This concludes the proof of Theorem \ref{thm:fpoly-classical} in type $\mbox{C}_{n}$.

\subsection{Type $\mbox{B}_{n}$}
In this subsection, we finish the proof of Theorem \ref{thm:fpoly-classical} using the same strategy as in the previous subsection.  Let $\overline{B^{0}} = (\overline{b}_{ij})$ be an $n \times n$ acyclic exchange matrix of type $\mbox{B}_{n}$.  Let $G$ be the subgroup of $S_{n + 1}$ generated by the involution $\sigma$, where $\sigma(i) = i$ for all $i \in [1, n - 1]$, $\sigma(n + 1) = n$, $\sigma(n) = n + 1$.  Then $G$ is a group of automorphisms for the matrix $B^{0} = (b_{ij})$ of type $\mbox{D}_{n + 1}$ whose entries are given below: 
\begin{eqnarray}
b_{ij} & = & \overline{b}_{ij} \mbox{ if } 1 \leq i, j \leq n \mbox{ and } (i, j) \neq (n, n - 1)  \\
b_{n, n - 1} & = & -b_{n - 1, n} = \mbox{sgn}(\overline{b}_{n - 1, n}) \in \{ +1, -1 \} \\
b_{n + 1, n - 1} & = & -b_{n - 1, n + 1} = \mbox{sgn}(\overline{b}_{n - 1, n}) \in \{ +1, -1 \} \\
b_{n + 1, j} & = & b_{j, n + 1} = 0 \mbox{ if } 1 \leq j \leq n + 1, j \neq n - 1.
\end{eqnarray}
The matrix $\overline{B^{0}}$ is indeed the quotient matrix of $B^{0}$, which justifies the choice of notation.  Write $Q^{0} = Q(B^{0})$ and $\overline{Q^{0}} = Q(\overline{B^{0}})$. 

In this case, it is easy to verify Lemma \ref{lemma:typebc-proj} holds using a similar argument.  Let $\dd \in \Phi_{+}$.  If $\dd = \sum_{i = 1}^{n} d_{i}\ee_{i}$ with each $d_{i} = 0$ or $1$, then we can take $\dd' = \dd$ and the same reasoning as in type $\mbox{C}_{n}$ holds here.  

Assume for the remainder of this subsection that $\dd = \sum_{i = p}^{r - 1} \ee_{i} + \sum_{i = r}^{n} 2\ee_{i}$ for some $1 \leq p < r \leq n$. Then we can take $\dd' = \sum_{i = p}^{r - 1} \ee_{i} + \sum_{i = r}^{n - 2} 2\ee_{i} + \ee_{n - 1} + \ee_{n}$.  Write $\dd = (d_{1}, \ldots, d_{n})$ and $\dd' = (d_{1}', \ldots, d_{n + 1}')$.  Then $d_{i} = d_{i}'$ for all $i \in [1, n - 1]$ and $d_{n} = d_{n}' + d_{n + 1}'$.  Let $(e_{1}', \ldots, e_{n + 1}') \in \ZZ^{n + 1}$ such that $u_{1}^{e_{1}'} \ldots u_{n + 1}^{e_{n + 1}'}$ occurs with nonzero coefficient in $F_{\dd'}^{B^{0}; t_{0}}$.  Let $\ee = (e_{1}, \ldots, e_{n}) \in \ZZ^{n}$ be given by $e_{i} = e_{i}'$ for $i \in [1, n - 1]$, $e_{n} = e_{n}' + e_{n + 1}'$.  Then $u_{1}^{e_{1}}\ldots u_{n}^{e_{n}}$ occurs with nonzero coefficient in $F_{\dd}^{\overline{B^{0}}; \overline{t}_{0}} = \pi(F_{\dd'}^{B^{0}; t_{0}})$, so we need to verify that $\ee$ satisfies conditions (1)-(3) and (5) in Theorem \ref{thm:fpoly-classical} with respect to $\overline{Q^{0}}$ and $\dd$.  (1) is immediate since $0 \leq e_{i}' \leq d_{i}'$ for all $i \in [1, n + 1]$.  The only possible critical pair is $(r - 1, r)$, so (3) also follows easily.  

For (2), let $i \rightarrow j$ be an arrow in $\overline{Q^{0}}$.  If $i, j \in [1, n - 1]$, then the fact that $i \rightarrow j$ is an acceptable arrow in $Q^{0}$ implies the same thing is true of $i \rightarrow j$ in $Q^{0}$.  

If $(i, j) = (n - 1, n)$, then the fact that $n - 1 \rightarrow n$ and $n - 1 \rightarrow n + 1$ are acceptable arrows in $Q^{0}$ implies that $e_{n - 1}' - e_{n}' \leq 1$ and $e_{n - 1}' - e_{n + 1}' \leq 1$.  If $e_{n - 1}' = 2$, the inequalities imply that $e_{n}' = e_{n + 1}' = 1$.  If $e_{n - 1}' = 1$, then $e_{n}' \geq 1$ or $e_{n + 1}' \geq 1$, since otherwise $n - 1 \rightarrow n$ and $n - 1 \rightarrow n$ are both critical arrows in $Q^{0}$.  In any case, it is clear that $e_{n - 1}' - e_{n}' - e_{n + 1}' \leq 0$, which means that $n - 1 \rightarrow n$ is an acceptable arrow in $\overline{Q^{0}}$.

If $(i, j) = (n, n - 1)$, then the fact that $n \rightarrow n - 1$ and $n + 1 \rightarrow n - 1$ are acceptable arrows in $Q^{0}$ means that $e_{n}' - e_{n - 1}' \leq 0$ and $e_{n + 1}' - e_{n - 1}' \leq 0$.  If $e_{n - 1}' = 0$, then the inequalities imply that $e_{n + 1}' = e_{n}' = 0$.  If $e_{n - 1}' = 1$, then $e_{n}' + e_{n + 1}' \leq 1$, since $(n - 1, n)$ and $(n - 1, n + 1)$ cannot both be critical pairs in $Q^{0}$.  If $e_{n - 1}' = 2$, then $e_{n}' \leq 1$ and $e_{n + 1}' \leq 1$, so $e_{n}' + e_{n + 1}' - e_{n - 1}' \leq 0$.    In any case, it holds that $i \rightarrow j$ is an acceptable arrow in $\overline{Q^{0}}$.

Let $S$ be the subgraph of $\overline{Q^{0}}$ induced by the set of vertices $\{ i \in [r, n] : e_{i} = 1 \}$, and let $S'$ be the subgraph of $Q^{0}$ induced by the set of vertices $\{ i \in [r, n - 1] : e_{i}' = 1 \}$.  Note that $S'$ is obtained from $S$ by deleting the vertex $n$ (if $n$ is in $S$) and any edges that contain $n$.

For condition (5), suppose that $S$ consists of a single component containing $n - 1$, and the arrow between $r - 1$ and $r$ is critical in $\overline{Q^{0}}$.   Then the arrow between $r - 1$ and $r$ is also critical in $Q^{0}$, and the equalities $e_{r} = \cdots = e_{n - 1} = 1$ and $e'_{r} = \cdots = e'_{n - 1} = 1$ hold.   Assume for the sake of contradiction that $e_{n} = 1$.  Then $e_{n}' = 1$ and $e_{n + 1}' = 0$, or $e_{n}' = 0$ and $e_{n + 1}' = 1$.  In either case, it is easy to check that one of the arrows between $n - 1$ and $n$ or between $n - 1$ and $n + 1$ in $Q^{0}$ is critical with respect to $(\dd', \ee')$, which contradicts (3) for $F_{\dd'}$.

Now suppose that $\ee = (e_{1}, \ldots, e_{n}) \in \ZZ^{n}$ satisfies conditions (1)-(3) and (5) in Theorem \ref{thm:fpoly-classical}.  We compute the coefficient of $u_{1}^{e_{1}}\ldots u_{n}^{e_{n}}$ in $\pi(F_{\dd'}^{B^{0}; t_{0}})$.  

If $e_{n} = 0$ or $e_{n} = 2$, then there is exactly one $(e_{1}', \ldots, e_{n + 1}') \in \ZZ^{n + 1}$ such that $u_{1}^{e_{1}'}\ldots u_{n + 1}^{e_{n + 1}'}$ occurs with nonzero coefficient in $F_{\dd'}^{B^{0}; t_{0}}$ and $\pi(u_{1}^{e_{1}'}\ldots u_{n + 1}^{e_{n + 1}'}) = u_{1}^{e_{1}}\ldots u_{n}^{e_{n}}$.  Note that $(n - 1, n)$ is not critical in $\overline{Q^{0}}$, and $(n - 1, n)$, $(n, n + 1)$ are not critical in $Q^{0}$.  Since $(r - 1, r)$ is critical in $Q^{0}$ if and only if it is critical in $\overline{Q^{0}}$, and the components of $S$ are the same as the components of $S'$, the theorem follows in this case from the type $\mbox{D}_{n}$ case.

Assume for the remainder of this proof that $e_{n} = 1$.  Then there are two possible $\ee' = (e_{1}', \ldots, e_{n + 1}')$ such that $u_{1}^{e_{1}'}\ldots u_{n + 1}^{e_{n + 1}'}$ occurs with nonzero coefficient in $F_{\dd'}^{B^{0}; t_{0}}$ and $\pi(u_{1}^{e_{1}'}\ldots u_{n + 1}^{e_{n + 1}'}) = u_{1}^{e_{1}}\ldots u_{n}^{e_{n}}$.  To be more precise, $e_{i}' = e_{i}$ for $i \in [1, n - 1]$, and either $e_{n}' = 1$ and $e_{n + 1}' = 0$, or $e_{n}' = 0$ and $e_{n + 1}' = 1$.  Note that the coefficient of $u_{1}^{e_{1}'}\ldots u_{n + 1}^{e_{n + 1}'}$ in $F_{\dd}^{B^{0}; t_{0}}$ for either $\ee'$ is equal to $2^{c}$, where $c$ is the number of components of $S'$, so we need to show that the number of components of $S'$ is $c + 1$.  

Suppose that $n - 1 \rightarrow n$ in $\overline{Q^{0}}$.  Then the fact that this arrow is acceptable implies that $e_{n - 1} = 0$ or $1$.  If $e_{n - 1} = 0$, then the arrows $n - 1 \rightarrow n$ and $n - 1 \rightarrow n + 1$ in $Q^{0}$ are not critical.  If $e_{n - 1} = 1$, then one of the arrows $n - 1 \rightarrow n$ or $n - 1 \rightarrow n + 1$ is critical in $Q^{0}$.  In either case, the components $C$ of $S$ for which $\nu(C) = 0$ are the components of $S'$ for which $\nu(C) = 0$, plus an additional component containing the vertex $n$.   By similar reasoning, the same statement may be proven when $n \rightarrow n - 1$ in $\overline{Q^{0}}$.  This finishes the proof of Theorem \ref{thm:fpoly-classical}.

\section{Quantum $F$-polynomials}

All of the definitions and results in this section can be found in \cite{quantumfpoly}.  For this section, fix an $n \times n$ skew-symmetrizable integer matrix $B^{0} = (b^{0}_{ij})$ at $t_{0}$, and let $D$ be a diagonal matrix with positive integer diagonal entries such that $DB^{0}$ is skew-symmetric.  
Let $\hat{\delta} = (\delta_{1}, \ldots, \delta_{n}) \in \ZZ^{n}$ be the vector consisting of the entries of the diagonal of $D$.   We will not explicitly define the quantum $F$-polynomials  $F_{j; t} = F_{j; t}^{B^{0}; D; t_{0}}$ ($j \in [1, n]$, $t \in \mathbb{T}_{n}$) but simply recall results about quantum $F$-polynomials  which will be used in the next section to compute them.  

 The variables for quantum $F$-polynomials will be $Z_{1}, \ldots, Z_{n}$, where the $Z_{i}$ satisfy the quasi-commutation relation

\begin{eqnarray}
Z_{i}Z_{j} = q^{\delta_{i}b^{0}_{ij}}Z_{j}Z_{i}
\end{eqnarray}
for all $i, j \in [1, n]$.  Note that the quantum $F$-polynomials will be contained in the ring
\begin{eqnarray} \mathcal{R} = \mathbb{Z}[q^{\pm \frac{1}{2}}, Z_{1}, \ldots, Z_{n}].
\end{eqnarray}  

Some basic facts about quantum $F$-polynomials are given in the next proposition.

\begin{proposition} \cite{quantumfpoly} \begin{enumerate}
   \item For $j \in [1, n]$, $F_{j; t_{0}} = 1$. 
   \item Let $t \in \mathbb{T}_{n}$ such that $t_{0} \frac{k}{\hspace{1cm}} t$.   Then $F_{k; t} = q^{\delta_{k}/2}Z_{k} + 1$.
   \item Let $t, t' \in \mathbb{T}_{n}$ such that $t \frac{k}{\hspace{1cm}} t'$.  Then $F_{j; t} = F_{j; t'}$ for $j \neq k$.
\end{enumerate}
\end{proposition}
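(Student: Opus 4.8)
The plan is to derive all three parts from the quantum analogue of Proposition \ref{thm:cluster-var-formula} established in \cite{quantumfpoly}: every quantum cluster variable $X_{j;t}$ in $\prinA$ admits a \emph{unique} expression as a normalized polynomial $F_{j;t}$ evaluated at the quantum $\hat{Y}$-variables (the noncommutative counterparts of the $\hat{y}_j$ of \eqref{def:yhat}) times a monomial $X_1^{g_1}\cdots X_n^{g_n}$ in the initial quantum cluster, where the normalization (the quantum version of primitivity, i.e.\ constant term $1$) pins $F_{j;t}$ down uniquely. Granting this, part (1) is immediate: at $t_0$ the quantum cluster variable $X_{j;t_0}$ is the initial generator $X_j$ itself, and matching the $X$-degrees on both sides of the characterizing identity leaves $F_{j;t_0}=1$, $\gg_{j;t_0}=\ee_j$ as the only possibility consistent with the normalization, exactly as in the classical argument behind Proposition \ref{thm:cluster-var-formula}.

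For part (2), I would compute directly in $\prinA$. With principal coefficients the $k$th column of the initial exchange matrix has upper part $(b^0_{ik})_i$ and a single $1$ in row $n+k$, so the quantum exchange relation (the noncommutative refinement of \eqref{eq:exchange-relation}) in direction $k$ writes $X_{k;t}$ as $X_k^{-1}$ times a $q$-weighted sum of the two monomials $\prod_{i=1}^n X_i^{[b^0_{ik}]_+}$ and $\prod_{i=1}^n X_i^{[-b^0_{ik}]_+}$, the first of these also carrying the factor $X_{n+k}$. Factoring out the common right-hand monomial $X_k^{-1}\prod_{i=1}^n X_i^{[-b^0_{ik}]_+}$ and recognizing $\hat{Y}_k$ inside the remaining term, one rewrites $X_{k;t}$ as $(q^{\delta_k/2}Z_k+1)$ times a monomial in the $X_i$; the exponent $\delta_k/2$ is precisely the power of $q^{1/2}$ generated when the $q$-commuting factors are put in standard order, which is governed by the symmetrizer $D$. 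The trailing monomial must then be $X_1^{g_1}\cdots X_n^{g_n}$ with $\gg$ the one-step $\gg$-vector of Theorem \ref{thm:g-vec-classical}, and uniqueness yields $F_{k;t}=q^{\delta_k/2}Z_k+1$.

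Part (3) needs no computation: by Definition \ref{def:seed-mut} the seed mutation $\mu_k$ fixes $x_j'=x_j$ for $j\neq k$, and the same holds at the quantum level, so the quantum cluster variables satisfy $X_{j;t}=X_{j;t'}$ whenever $t$ and $t'$ are joined by an edge labeled $k$ and $j\neq k$. Since $F_{j;t}$ (and $\gg_{j;t}$) is uniquely determined by the quantum cluster variable relative to the fixed initial data $(B^0,D,t_0)$, equality of the cluster variables forces $F_{j;t}=F_{j;t'}$.

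The part I expect to be delicate is the $q$-bookkeeping in (2): one must carefully track the half-integer powers of $q$ introduced by the noncommutativity of the $X_i$ (and of the $Z_i$) as one passes among the quantum exchange relation, the definition of the $\hat{Y}_j$, and the chosen normalization of quantum $F$-polynomials, and verify that these contributions collapse to exactly $q^{\delta_k/2}$. The remaining steps are a faithful transcription of the classical reasoning together with Definition \ref{def:seed-mut}, so no further difficulty is anticipated.
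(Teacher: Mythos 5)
This proposition is not proved in the paper at all: it is quoted verbatim from \cite{quantumfpoly}, and the section explicitly defers all definitions and proofs to that reference, so there is no in-paper argument to compare against. Your route --- deriving all three parts from the existence-and-uniqueness characterization of quantum $F$-polynomials (the quantum analogue of Proposition \ref{thm:cluster-var-formula}) --- is the natural one and is sound: part (1) follows by matching the characterizing identity at $t_0$, and part (3) follows because quantum seed mutation in direction $k$ leaves the cluster variables with index $j \neq k$ untouched, so uniqueness forces $F_{j;t} = F_{j;t'}$.

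The one place where your argument is asserted rather than carried out is the $q$-exponent in part (2), and you correctly identified it as the delicate point. To actually verify that the normalization factors collapse to exactly $q^{\delta_k/2}$ you need the quasi-commutation data of the initial quantum seed --- the compatible pair, i.e.\ the $\Lambda$-matrix recording how the initial cluster variables $X_i$ and the principal-coefficient variables $X_{n+j}$ commute --- together with the normalization convention for based monomials and for $Z^{\cc}$ as in \eqref{eqn:Z-monomial}; none of this is specified in the present paper, and without writing it down the statement ``the contributions collapse to $q^{\delta_k/2}$'' is a claim, not a computation. Also note a small imprecision: the trailing monomial in your part (2) is governed by the general one-step $\gg$-vector $-\ee_k + \sum_j [-b^0_{jk}]_+ \ee_j$, not by Theorem \ref{thm:g-vec-classical}, which is a finite-type statement; the proposition holds for arbitrary skew-symmetrizable $B^0$. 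Neither issue is a wrong turn, but the $q$-bookkeeping in (2) is precisely the content that the cited reference supplies and that your sketch leaves open.
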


For $\textbf{c} = (c_{1}, \ldots, c_{n}) \in \ZZ^{n}$, define 
\begin{eqnarray} \label{eqn:Z-monomial}
Z^{\textbf{c}} = q^{\frac{1}{2} \sum_{1 \leq i < j \leq n} \delta_{j}b^{0}_{ji}c_{i}c_{j}} Z_{1}^{c_{1}} \cdots Z_{n}^{c_{n}}.
\end{eqnarray}

 Let $G_{1}, \ldots, G_{p} \in \mathcal{R}$.  Write 
\begin{eqnarray}
\prod^{\rightarrow}_{i \in [1, p]} G_{i} = G_{1} \ldots G_{p}.
\end{eqnarray}
The elements $G_{1}, \ldots, G_{n}$ do not necessarily commute, so the product notation establishes a fixed order in which the elements are to be multiplied.

For $\aa = (a_{1}, \ldots, a_{n}), \bb = (b_{1}, \ldots, b_{n}), \cc = (c_{1}, \ldots, c_{n}) \in \ZZ^{n}$, define the notation 
\begin{eqnarray}
\aa \cdot \bb \cdot \cc = \sum_{i = 1}^{n} a_{i}b_{i}c_{i} \in \ZZ.
\end{eqnarray}

For $\textbf{a} = (a_{1}, \ldots, a_{n}) \in \mathbb{Z}^{n}$, define a
$\ZZ[q^{\pm \frac{1}{2}}]$-linear operator $L[\textbf{a}]: \mathcal{R} \rightarrow \mathcal{R}$ by setting
\begin{eqnarray} \label{def:L-function}
L[\textbf{a}](Z^{\textbf{b}}) = q^{-\aa \cdot \bb \cdot \hat{\delta}}Z^{\textbf{b}},
\end{eqnarray}
for $\textbf{b} = (b_{1}, \ldots, b_{n}) \in \mathbb{Z}^{n}$.

Suppose that for some $t \in \mathbb{T}_{n}$, the quantum $F$-polynomials $F_{j; t}$ are known for all $j \in [1, n]$, and $t' \in \mathbb{T}_{n}$ such that $t \frac{k}{\hspace{1cm}} t'$.  If $t'$ satisfies a certain condition, then the following result may be used to find the general form of $F_{k; t'}$, and the propositions that follow may be used to sharpen what is known about $F_{k; t'}$.

\begin{theorem} \cite[Theorem 6.7]{quantumfpoly} \label{thm:quantum-fpoly-rec} Let $k_{1}, \ldots, k_{\ell} \in [1, n]$ be a sequence of mutations such that $k_{\ell} \notin \{ k_{1}, \ldots, k_{\ell - 1} \}$, and set $k = k_{\ell}$. Write $t \in \mathbb{T}_{n}$ for the vertex obtained from $t_{0}$ along the path with edges labeled by $k_{1}, \ldots, k_{\ell - 1}$, and let $t' \in \mathbb{T}_{n}$ such that $t \frac{k}{\hspace{1cm}} t'$.  Let $B = (b_{ij})$ be the principal part of the exchange matrix at $t$.   For $\epsilon \in \{ + , - \}$, let
\begin{eqnarray}
I_{\epsilon} = \{ j : j = k_{i} \mbox{ for some } i, \mbox{and }  \epsilon b_{jk} > 0 \}
\end{eqnarray}
Let $k_{1}^{+}, \ldots, k_{p}^{+}$ be a list of the elements of $I_{+}$ in some order, and let $k_{1}^{-}, \ldots, k_{r}^{-}$ be a list of the elements of $I_{-}$.   For $\epsilon \in \{ +, - \}$, let
\begin{eqnarray}
G_{j}^{(\epsilon)} = \prod^{\rightarrow}_{i \in [1, [\epsilon b_{k_{j}^{\epsilon}; k}]]_{+}} L[(i - 1)\gg_{k_{j}^{\epsilon}; t}](F_{k_{j}^{\epsilon}; t})
\end{eqnarray}
for $j \in [1, p]$ if $ \epsilon = +$ and for $j \in [1, r]$ if $\epsilon = -$.  Also, let
\begin{eqnarray}
\hat{F}_{j}^{(\epsilon)} = L\left[ \sum_{i \in [1, j - 1]} [\epsilon b_{k_{j}^{\epsilon}, k}]_{+} \gg_{k_{j}^{\epsilon}; t}\right](G_{j}^{(\epsilon)})
\end{eqnarray}
Then there exist $\lambda^{+}, \lambda^{-} \in \frac{1}{2}\ZZ$ and $\aa^{+}, \aa^{-} \in \ZZ^{n}$ such that 
\begin{eqnarray}
F_{k; t'} = q^{\lambda^{+}}\left( \prod^{\rightarrow}_{j \in [1, p]} \hat{F}_{j}^{(+)}\right)Z^{\aa^{+}} + q^{\lambda^{-}}\left( \prod^{\rightarrow}_{j \in [1, r]} \hat{F}_{j}^{(-)}\right)Z^{\aa^{-}}
\end{eqnarray}

\end{theorem}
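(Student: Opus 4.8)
\emph{Proof proposal.} The plan is to substitute the quantum separation-of-additions formula into the quantum exchange relation at $t$ and then keep careful track of the powers of $q$ that appear, using the operators $L[\,\cdot\,]$ to package the noncommutativity.

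First I would invoke the quantum counterpart of Proposition~\ref{thm:cluster-var-formula} from \cite{quantumfpoly} (the defining property of quantum $F$-polynomials): for every $\ell\in[1,n]$ and $s\in\TT_n$ there is a scalar $c_{\ell;s}\in\frac12\ZZ$ with
\begin{eqnarray*}
X_{\ell;s}=q^{c_{\ell;s}}\,F_{\ell;s}(\hat Y_1,\dots,\hat Y_n)\,X^{\gg_{\ell;s}},
\end{eqnarray*}
where the $\hat Y_j$ are the quantum analogues of the $\hat y_j$ of \eqref{def:yhat}, $X^{\gg}$ is the twisted monomial in the initial cluster variables with exponent vector $\gg$, and the $\hat Y_j$ quasi-commute with the $X_i$ in a fixed way. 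I would then record the resulting identity: for a polynomial $P$ in the $\hat Y_j$ and a vector $\aa\in\ZZ^n$, one has $P(\hat Y)\,X^{\aa}=(\text{scalar})\,X^{\aa}\,(L[\aa]P)(\hat Y)$. This is precisely what the normalization $L[\aa](Z^{\bb})=q^{-\aa\cdot\bb\cdot\hat\delta}Z^{\bb}$ of \eqref{def:L-function} is built to encode, and I would verify it directly from the $\hat Y$ versus $X$ commutation relations, being careful with the factor-of-two conventions.

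Next I would write the quantum exchange relation expressing $X_{k;t'}$ in terms of the seed at $t$. Since $k=k_\ell\notin\{k_1,\dots,k_{\ell-1}\}$, the path from $t_0$ to $t$ never mutates in direction $k$, so $X_{k;t}=X_k$ and $\gg_{k;t}=\ee_k$; more generally, any index outside $\{k_1,\dots,k_{\ell-1}\}$ has $X_{i;t}=X_i$, $F_{i;t}=1$, $\gg_{i;t}=\ee_i$ (by the third part of the basic proposition on quantum $F$-polynomials recalled above). Hence, up to initial-cluster and coefficient monomials, the $\epsilon$-term of the exchange relation is governed by $\prod^{\rightarrow}_{i\in I_\epsilon}X_{i;t}^{[\epsilon b_{ik}]_+}$, and I would substitute the separation formula for each $X_{k_j^\epsilon;t}$. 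Expanding a single factor $X_{k_j^\epsilon;t}^{[\epsilon b_{k_j^\epsilon k}]_+}=\big(q^{\,c}F_{k_j^\epsilon;t}(\hat Y)X^{\gg_{k_j^\epsilon;t}}\big)^{[\epsilon b_{k_j^\epsilon k}]_+}$ and moving each copy of the monomial $X^{\gg_{k_j^\epsilon;t}}$ to the right, the $i$-th copy of $F_{k_j^\epsilon;t}$ is pushed past $(i-1)$ such monomials and so becomes $L[(i-1)\gg_{k_j^\epsilon;t}](F_{k_j^\epsilon;t})$; the product over $i\in[1,[\epsilon b_{k_j^\epsilon k}]_+]$ is exactly $G_j^{(\epsilon)}$. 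Multiplying these factors over $j$ in the prescribed order and again pushing all the $X$-monomials to the right, $G_j^{(\epsilon)}$ is moved past the monomials produced by $k_1^\epsilon,\dots,k_{j-1}^\epsilon$, that is, past $X^{\sum_{i<j}[\epsilon b_{k_i^\epsilon k}]_+\gg_{k_i^\epsilon;t}}$, which turns it into $\hat F_j^{(\epsilon)}$. Thus $\prod^{\rightarrow}_{i\in I_\epsilon}X_{i;t}^{[\epsilon b_{ik}]_+}=q^{\,(\ast)}\big(\prod^{\rightarrow}_{j}\hat F_j^{(\epsilon)}(\hat Y)\big)X^{(\ast\ast)}$ for explicit exponents.

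Finally I would fold in the remaining coefficient monomials, the factor $X_k^{-1}$, and the monomials coming from the indices outside the mutation sequence, so that the $\epsilon$-term of $X_{k;t'}$ becomes $q^{\,\lambda^\epsilon}\big(\prod^{\rightarrow}_{j}\hat F_j^{(\epsilon)}(\hat Y)\big)(\text{monomial in the }\hat Y_j)\,X^{\gg_{k;t'}}$; comparing with $X_{k;t'}=q^{c_{k;t'}}F_{k;t'}(\hat Y)X^{\gg_{k;t'}}$ and cancelling the common factor $X^{\gg_{k;t'}}$ (legitimate because the $\hat Y_j$ are algebraically independent in the quantum torus) yields the asserted formula, with $Z^{\aa^\epsilon}$ the leftover monomial in the $\hat Y_j$ rewritten in the variables $Z_i$ and $\lambda^\epsilon$ the total accumulated power of $q$; that $\aa^\epsilon\in\ZZ^n$ and $\lambda^\epsilon\in\frac12\ZZ$ falls out of the bookkeeping. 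The main obstacle I expect is exactly this bookkeeping: pinning down the conjugation identity $P(\hat Y)X^{\aa}=(\text{scalar})\,X^{\aa}(L[\aa]P)(\hat Y)$ with the right conventions, and then controlling the powers of $q$ and the order of the noncommuting factors precisely enough that the operators $L[\,\cdot\,]$ appear with exactly the arguments $(i-1)\gg_{k_j^\epsilon;t}$ and $\sum_{i<j}[\epsilon b_{k_i^\epsilon k}]_+\gg_{k_i^\epsilon;t}$ required by the statement; the independence of the two outer products on the chosen orderings of $I_+$ and $I_-$ is then absorbed into the freedom in $\lambda^{\pm}$.
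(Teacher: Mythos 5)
The paper itself gives no proof of this statement---it is imported verbatim from \cite[Theorem 6.7]{quantumfpoly}---and your proposal reconstructs what is essentially the argument behind that result: express each $X_{k_j^{\epsilon};t}$ by the quantum separation formula, substitute into the exchange relation at $t$ (where $X_{k;t}=X_k$, $F_{k;t}=1$, $\gg_{k;t}=\ee_k$ because $k\notin\{k_1,\dots,k_{\ell-1}\}$), and push all $X$-monomials to the right, with the operators $L[\,\cdot\,]$ recording the resulting conjugations of the $F$-polynomials. Your bookkeeping even produces the natural argument $\sum_{i\in[1,j-1]}[\epsilon b_{k_i^{\epsilon},k}]_+\gg_{k_i^{\epsilon};t}$ inside $\hat F_j^{(\epsilon)}$ (the summand written with $k_j^{\epsilon}$ in the displayed statement appears to be a typographical slip), so, modulo the sign and normalization conventions you explicitly flag, the approach is correct and matches the intended proof.
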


\begin{proposition} \label{prop:fpoly1} \cite[Theorem 5.3]{quantumfpoly} Let $u_{1}, \ldots, u_{n}$ be independent commuting variables.  Setting $Z_{i} = u_{i}$ for $i \in [1, n]$ and $q = 1$ in $F_{j; t}^{B^{0}; D; t_{0}}$ yields the (non-quantum) $F$-polynomial $F_{j; t}^{B^{0}; t_{0}}$.

\end{proposition}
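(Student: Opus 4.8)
The plan is to derive the identity from the uniqueness statement in Proposition \ref{thm:cluster-var-formula}, combined with the defining property of quantum $F$-polynomials established in \cite{quantumfpoly}: in the quantum cluster algebra with principal coefficients attached to $(B^{0},D)$ at $t_{0}$, each quantum cluster variable $X_{\ell;t}$ has the form $X_{\ell;t}=F^{B^{0};D;t_{0}}_{\ell;t}(\hat{Y}_{1},\ldots,\hat{Y}_{n})\,X^{\gg_{\ell;t}}$, where $X^{\gg_{\ell;t}}$ is a monomial (a power of $q^{1/2}$ times $X_{1}^{g_{1}}\cdots X_{n}^{g_{n}}$, with $\gg_{\ell;t}=(g_{1},\ldots,g_{n})$), the $\hat{Y}_{j}$ are the quantum analogues of the $\hat{y}_{j}$ of \eqref{def:yhat}, and they obey the same quasi-commutation relations as the $Z_{j}$.

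First I would record that the substitution $q^{1/2}\mapsto 1$, $Z_{i}\mapsto u_{i}$ is a well-defined ring homomorphism $\mathrm{ev}\colon\mathcal{R}\to\ZZ[u_{1},\ldots,u_{n}]$: setting $q=1$ collapses the relation $Z_{i}Z_{j}=q^{\delta_{i}b^{0}_{ij}}Z_{j}Z_{i}$ to commutativity, and by \eqref{eqn:Z-monomial} one has $\mathrm{ev}(Z^{\cc})=u_{1}^{c_{1}}\cdots u_{n}^{c_{n}}$ for every $\cc=(c_{1},\ldots,c_{n})\in\ZZ^{n}$; thus $\mathrm{ev}$ is precisely the specialization described in the statement. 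Next I would use that this specialization extends over the ambient quantum tori and is compatible with quantum mutation --- this is part of the framework of \cite{quantum} and \cite{quantumfpoly} --- so that it carries the quantum cluster algebra onto the classical cluster algebra $\prinA(B^{0},t_{0})$, sending $X_{i}\mapsto x_{i}$, $\hat{Y}_{j}\mapsto\hat{y}_{j}$, and $X_{\ell;t}\mapsto x_{\ell;t}$. In particular, applying $\mathrm{ev}$ coefficientwise to $F=\sum_{\cc}c_{\cc}(q)Z^{\cc}$ and then evaluating the resulting polynomial at the $\hat{y}_{j}$ gives the same element of $\prinA(B^{0},t_{0})$ as first evaluating $F$ at the $\hat{Y}_{j}$ and then applying $\mathrm{ev}$.

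Now apply $\mathrm{ev}$ to $X_{\ell;t}=F^{B^{0};D;t_{0}}_{\ell;t}(\hat{Y}_{1},\ldots,\hat{Y}_{n})\,X^{\gg_{\ell;t}}$. The left side becomes $x_{\ell;t}$; on the right the power of $q^{1/2}$ inside $X^{\gg_{\ell;t}}$ dies, so
\begin{equation}
x_{\ell;t}=\bigl(\mathrm{ev}(F^{B^{0};D;t_{0}}_{\ell;t})\bigr)(\hat{y}_{1},\ldots,\hat{y}_{n})\,x_{1}^{g_{1}}\cdots x_{n}^{g_{n}} .
\end{equation}
Since $F^{B^{0};D;t_{0}}_{\ell;t}$ has constant term $1$ in $\mathcal{R}$ (established in \cite{quantumfpoly}, e.g.\ from the base cases in the ``basic facts'' proposition together with Theorem \ref{thm:quantum-fpoly-rec}), its specialization $\mathrm{ev}(F^{B^{0};D;t_{0}}_{\ell;t})$ also has constant term $1$, hence is a primitive polynomial in $\ZZ[u_{1},\ldots,u_{n}]$. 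By the uniqueness clause of Proposition \ref{thm:cluster-var-formula}, therefore $\mathrm{ev}(F^{B^{0};D;t_{0}}_{\ell;t})=F^{B^{0};t_{0}}_{\ell;t}$ (and, as a bonus, $(g_{1},\ldots,g_{n})=\gg_{\ell;t}$), which is exactly the assertion.

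The step I expect to be the main obstacle is the second one: verifying carefully that setting $q=1$ collapses the entire quantum cluster structure onto the classical one in a way compatible with every mutation, so that $X_{\ell;t}\mapsto x_{\ell;t}$; everything else is bookkeeping and one appeal to uniqueness. If one prefers to avoid the cluster-variable formula altogether, an alternative is induction on the length of a mutation path from $t_{0}$: the base cases are immediate from the ``basic facts'' proposition, and for the inductive step one specializes the recursion of Theorem \ref{thm:quantum-fpoly-rec} at $q=1$, where every operator $L[\aa]$ becomes the identity, every $Z^{\aa^{\pm}}$ becomes the ordinary monomial $u_{1}^{a^{\pm}_{1}}\cdots u_{n}^{a^{\pm}_{n}}$, and the scalars $q^{\lambda^{\pm}}$ become $1$; the result is exactly the classical $F$-polynomial recursion of \cite{coefficients}, and the inductive hypothesis $\mathrm{ev}(F_{k^{\pm}_{j};t})=F^{B^{0};t_{0}}_{k^{\pm}_{j};t}$ closes the argument.
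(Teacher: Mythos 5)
This proposition is not proved in the paper at all: it is quoted verbatim from the predecessor paper, \cite[Theorem~5.3]{quantumfpoly}, so there is no internal argument to compare against. Your specialization-plus-uniqueness argument is the natural route and is, in substance, how such a statement is established: the evaluation $q^{1/2}\mapsto 1$, $Z_{i}\mapsto u_{i}$ is a ring homomorphism killing the quasi-commutation relations, the classical limit of the quantum cluster algebra with principal coefficients is the classical one (so quantum cluster variables, the $\hat{Y}_{j}$, and the quantum separation formula specialize correctly), and then the uniqueness clause of Proposition~\ref{thm:cluster-var-formula} identifies the specialized quantum $F$-polynomial with $F^{B^{0};t_{0}}_{\ell;t}$. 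Two caveats, both of which you correctly flag but which carry the real weight and live in the cited references rather than in anything available in this paper: (i) compatibility of the $q=1$ specialization with every quantum mutation, so that $X_{\ell;t}\mapsto x_{\ell;t}$, and (ii) primitivity (e.g.\ constant term $1$) of the specialized quantum $F$-polynomial, without which the uniqueness clause cannot be invoked; since this paper deliberately does not define quantum $F$-polynomials, these inputs must be taken from \cite{quantumfpoly} and \cite{quantum}, exactly as you do. Your alternative inductive route is weaker as stated: the recursion of Theorem~\ref{thm:quantum-fpoly-rec} involves undetermined scalars $q^{\lambda^{\pm}}$ and monomials $Z^{\aa^{\pm}}$, so its $q=1$ specialization does not by itself reproduce the classical recursion of \cite{coefficients} without an additional matching argument pinning down $\aa^{\pm}$; the first argument is the one to keep.
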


\begin{proposition} \cite[Corollary 6.5]{quantumfpoly} \label{prop:fpoly-coeff} Let $j \in [1, n]$, $t \in \mathbb{T}_{n}$.  For $\aa \in \ZZ^{n}$,  let $P_{\aa} \in \ZZ[x, x^{-1}]$ such that $P_{\aa}(q^{\frac{1}{2}})$ is the coefficient of $Z^{\aa}$ in the quantum $F$-polynomial $F_{j; t}$.  

Then
\begin{eqnarray}
P_{\aa}(q^{\frac{1}{2}}) = q^{-\gg_{j; t} \cdot \aa \cdot \hat{\delta}}P_{\aa}(q^{-\frac{1}{2}}).
\end{eqnarray}
In particular, if $P_{\aa}(q) = q^{c}$ for some $c \in \frac{1}{2}\ZZ$, then $c = -\frac{\gg_{j; t}\cdot \aa \cdot \hat{\delta}}{2}$.
\end{proposition}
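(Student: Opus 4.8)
The plan is to derive the identity from the bar-invariance of quantum cluster variables, a cornerstone of the theory \cite{quantum} that is built into \cite{quantumfpoly}; the ``in particular'' clause then follows for free. Recall that the quantum cluster algebra with principal coefficients lives inside a quantum torus $\mathcal{T}$ carrying a $\ZZ$-algebra anti-automorphism $z \mapsto \overline{z}$, the \emph{bar involution}, determined by $\overline{q^{1/2}} = q^{-1/2}$ and by fixing the initial quasi-commuting generators; every quantum cluster variable $X_{j;t}$ is bar-invariant \cite{quantum}. Two preliminary observations will be needed. First, $\overline{Z^{\cc}} = Z^{\cc}$ for the symmetrized monomials of \eqref{eqn:Z-monomial} (a direct reordering computation using $\delta_i b^0_{ij} = -\delta_j b^0_{ji}$), and likewise the symmetrized monomials $\hat{y}^{\,\aa}$ in the $\hat{y}_i$ of \eqref{def:yhat} and the symmetrized monomials in the initial cluster variables are bar-invariant. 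Second, the defining compatibility of the quantization matrix with $\binom{B^{0}}{I}$ forces the commutation of an initial-cluster monomial $x^{\,\gg}$ with a $\hat{y}$-monomial $\hat{y}^{\,\aa}$ to be governed by $D$ alone, namely $x^{\,\gg}\,\hat{y}^{\,\aa} = q^{-\gg\cdot\aa\cdot\hat{\delta}}\,\hat{y}^{\,\aa}\,x^{\,\gg}$.

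Next I would invoke the quantum analogue, established in \cite{quantumfpoly}, of the expansion \eqref{eq:cluster-var-fpoly}: writing $F_{j;t} = \sum_{\aa} P_{\aa}(q^{1/2})\,Z^{\aa}$ in the symmetrized monomials, this analogue reads $X_{j;t} = \big(\sum_{\aa} P_{\aa}(q^{1/2})\,\hat{y}^{\,\aa}\big)\,x^{\,\gg_{j;t}}$ in $\mathcal{T}$. Apply the bar involution. The left side is fixed; on the right, since the bar involution is an anti-automorphism, it reverses the two factors, fixes each symmetrized monomial by the first observation, and conjugates the scalars, giving $X_{j;t} = x^{\,\gg_{j;t}}\big(\sum_{\aa} P_{\aa}(q^{-1/2})\,\hat{y}^{\,\aa}\big)$. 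Moving $x^{\,\gg_{j;t}}$ back to the right through each $\hat{y}^{\,\aa}$ by the second observation yields $X_{j;t} = \sum_{\aa} q^{-\gg_{j;t}\cdot\aa\cdot\hat{\delta}}\,P_{\aa}(q^{-1/2})\,\hat{y}^{\,\aa}\,x^{\,\gg_{j;t}}$. Because the monomials $\hat{y}^{\,\aa}\,x^{\,\gg_{j;t}}$ for distinct $\aa$ are $\basering$-linearly independent in $\mathcal{T}$, comparison with the original expansion of $X_{j;t}$ forces $P_{\aa}(q^{1/2}) = q^{-\gg_{j;t}\cdot\aa\cdot\hat{\delta}}\,P_{\aa}(q^{-1/2})$. (Equivalently, by \eqref{def:L-function} this says $\overline{F_{j;t}} = L[-\gg_{j;t}](F_{j;t})$ in $\mathcal{R}$.)

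For the ``in particular'' statement: if $P_{\aa}$ is a monomial with $P_{\aa}(q^{1/2}) = q^{c}$, then $P_{\aa}(q^{-1/2}) = q^{-c}$, and the identity reads $q^{c} = q^{-\gg_{j;t}\cdot\aa\cdot\hat{\delta}}\,q^{-c}$, so $2c = -\gg_{j;t}\cdot\aa\cdot\hat{\delta}$ and $c = -\tfrac12\,\gg_{j;t}\cdot\aa\cdot\hat{\delta}$.

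The main obstacle is the second preliminary observation: verifying that the $q$-exponent picked up when $x^{\,\gg}$ is commuted past $\hat{y}^{\,\aa}$ is exactly $-\gg\cdot\aa\cdot\hat{\delta}$, with the correct sign and no stray factor of $2$. This is a finite, formal computation, but it must be carried out against the precise conventions of \cite{quantumfpoly} for the quantization matrix, for the symmetrization in \eqref{eqn:Z-monomial}, and for the definition \eqref{def:yhat} of the $\hat{y}_i$ in the principal-coefficients setting; once it is in place, the remainder is routine manipulation of the bar involution together with linear independence of monomials in $\mathcal{T}$. A more computational alternative, bypassing bar-invariance, would be to induct on the distance of $t$ from $t_0$ and propagate the identity through the recursion of Theorem \ref{thm:quantum-fpoly-rec}, bookkeeping the operators $L[\cdot]$ and the half-integers $\lambda^{\pm}$; I would treat this only as a fallback, the bookkeeping being heavier.
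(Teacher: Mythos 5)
The paper does not prove this proposition at all: it is recalled verbatim from \cite[Corollary~6.5]{quantumfpoly}, so there is no in-text argument to compare against. Your bar-involution argument is the standard route to this identity (and is essentially how the cited corollary is obtained in the predecessor paper, ultimately resting on the bar-invariance of quantum cluster variables from \cite{quantum}): the computation $\overline{Z^{\cc}}=Z^{\cc}$, the commutation $x^{\gg}\hat{y}^{\aa}=q^{-\gg\cdot\aa\cdot\hat{\delta}}\hat{y}^{\aa}x^{\gg}$ forced by the compatibility $\tilde{B}^{T}\Lambda=(\tilde D\mid 0)$, and the comparison of coefficients all check out, and you correctly flag that the one delicate point is matching the symmetrization conventions of \cite{quantumfpoly} (in particular that the expansion uses ordered products $\hat{y}^{\aa}x^{\gg}$ of individually symmetrized monomials rather than a single fully symmetrized monomial, without which the $q^{-\gg\cdot\aa\cdot\hat{\delta}}$ factor would not appear). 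The proposal is correct.
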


\section{Quantum $F$-polynomials in classical types}

Let $B^{0} = (b^{0}_{ij})$ be an acyclic $n \times n$ exchange matrix of type $\mbox{A}_{n}$, $\mbox{B}_{n}$, $\mbox{C}_{n}$, or $\mbox{D}_{n}$.  We resume using the same notation established in Section \ref{fpoly-classical}. For type $\mbox{A}_{n}$ and type $\mbox{D}_{n}$, let $\delta = (1, \ldots, 1) \in \ZZ^{n}$.  For type $\mbox{C}_{n}$, let $\delta = (1, \ldots, 1, 2) \in \ZZ^{n}$.  For type $\mbox{B}_{n}$, let $\delta = (2, \ldots, 2, 1) \in \ZZ^{n}$.   Let $d$ be a positive integer, and let $D$ be the $n \times n$ diagonal matrix whose entries are given by the vector $d\delta$.  Thus, $DB^{0}$ is a skew-symmetric matrix.

Write $F_{\dd} = F_{\dd}^{B^{0}; D; t_{0}}, F_{\dd}^{cl} = F_{\dd}^{B^{0}; t_{0}}, \gg_{\dd} = \gg_{\dd}^{B^{0}; t_{0}}$ for the quantum $F$-polynomial, $F$-polynomial, and $\gg$-vector corresponding to $\dd \in \Phi_{+}(B^{0})$.  By convention, set $F_{0} = 1$, $\gg_{0} = 0$.

For $\aa = (a_{1}, \ldots, a_{n}) \in \ZZ^{n}_{\geq 0}$ such that $u_{1}^{a_{1}}\ldots u_{n}^{a_{n}}$ has nonzero coefficients in $F_{\dd}^{cl}$, let $\phi_{\dd}(\aa) \in \ZZ_{\geq 0}$ such that $2^{\phi_{\dd}(\aa)}$ is the coefficient of $u_{1}^{a_{1}}\ldots u_{n}^{a_{n}}$ in $F_{\dd}^{cl}$ (i.e., $\phi_{\dd}(\aa) = c$, where $c$ is given in Theorem \ref{thm:fpoly-classical}).  Also, let $\rho_{\dd}(\aa) = 1$ if $B^{0}$ is of type $\mbox{B}_{n}$, the $n$th component of $\aa$ is 1, and $\phi_{\dd}(\aa) \geq 1$, and let $\rho_{\dd}(\aa) = 0$ otherwise.

\begin{theorem}  \label{thm:quantum-classical}  Let $\dd \in \Phi_{+}(B^{0})$.  In types $\mbox{A}_{n}$, $\mbox{C}_{n}$ and $\mbox{D}_{n}$, the quantum $F$-polynomial $F_{\dd}$ is given by 

\begin{eqnarray}
F_{\dd} 
            & = & \sum q^{-\frac{d}{2}\delta \cdot \gg_{\dd} \cdot \aa}(q^{\frac{d}{2}} + q^{-\frac{d}{2}})^{\phi_{\dd}(\aa)}Z^{\aa}.
\end{eqnarray}
In type $\mbox{B}_{n}$, 
\begin{eqnarray}
F_{\dd}  
& = & \sum q^{-\frac{d}{2}\delta \cdot \gg_{\dd} \cdot \aa}(q^{-\frac{d}{2}} + q^{\frac{d}{2}})^{\rho_{\dd}(\aa)}(q^{-d} + q^{d})^{\phi_{\dd}(\aa) - \rho_{\dd}(\aa)}Z^{\aa}
\end{eqnarray}
In either case, each summation ranges over $\aa = (a_{1}, \ldots, a_{n}) \in \ZZ^{n}_{\geq 0}$ such that $u_{1}^{a_{1}}\ldots u_{n}^{a_{n}}$ has nonzero coefficient in $F_{\dd}^{cl}$.

\end{theorem}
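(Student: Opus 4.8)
The proof should follow the architecture of the proof of Theorem~\ref{thm:fpoly-classical}: type $\mbox{A}_{n}$ is quoted from \cite{quantumfpoly}, type $\mbox{D}_{n}$ is treated directly via the mutation recursion, and types $\mbox{B}_{n}$ and $\mbox{C}_{n}$ are obtained by projecting from types $\mbox{D}_{n+1}$ and $\mbox{A}_{2n-1}$. For type $\mbox{A}_{n}$ (and, more generally, whenever $d_{i}\le 1$ for all $i$, so that $S=\varnothing$ and $\phi_{\dd}\equiv\rho_{\dd}\equiv 0$) the asserted formula collapses to $F_{\dd}=\sum q^{-\frac{d}{2}\delta\cdot\gg_{\dd}\cdot\aa}Z^{\aa}$; this is exactly the formula established in \cite{quantumfpoly}, and the exponent is the one forced by Proposition~\ref{prop:fpoly-coeff} (each coefficient being a single power of $q$) once $\gg_{\dd}$ is written out using Theorem~\ref{thm:g-vec-classical} and one notes that the diagonal of $D$ is $d\delta$.

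For type $\mbox{D}_{n}$ I would induct on the length of a sink/source mutation sequence producing the cluster variable $x_{\dd}$, which exists by Theorem~\ref{thm:sinks} (and which can be chosen so that the recursion is applicable at each step). The base cases $F_{j;t_{0}}=1$ and $F_{k;t}=q^{d\delta_{k}/2}Z_{k}+1$ agree with the formula, the latter because $(\gg_{\ee_{k}})_{k}=-1$ by Theorem~\ref{thm:g-vec-classical}. For the inductive step one applies Theorem~\ref{thm:quantum-fpoly-rec}. Since the operators $L[\aa]$ multiply each $Z$-monomial by a power of $q$, and since by induction every quantum $F$-polynomial entering the recursion is, monomial by monomial, a power of $q$ times a product of factors $q^{d/2}+q^{-d/2}$, the output $F_{k;t'}$ is again a sum $\sum_{\aa}P_{\aa}(q^{1/2})Z^{\aa}$ with each $P_{\aa}$ a nonnegative combination of such terms. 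Specializing at $q=1$ via Proposition~\ref{prop:fpoly1} identifies $P_{\aa}(1)$ with the classical coefficient $2^{\phi_{\dd}(\aa)}$ of Theorem~\ref{thm:fpoly-classical}; the crux is to match the way the connected components of $S$ (hence the projective-line factors, as in the classical type-$\mbox{D}_{n}$ argument) are created, merged, or destroyed under a single sink/source mutation with the way the binomial factors $q^{d/2}+q^{-d/2}$ combine in the recursion, so as to conclude $P_{\aa}(q^{1/2})=q^{s_{\aa}}(q^{d/2}+q^{-d/2})^{\phi_{\dd}(\aa)}$ for some $s_{\aa}\in\frac{1}{2}\ZZ$. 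The exponent $s_{\aa}$ is then forced by Proposition~\ref{prop:fpoly-coeff}: since $(q^{d/2}+q^{-d/2})^{\phi_{\dd}(\aa)}$ is invariant under $q\mapsto q^{-1}$, we must have $s_{\aa}=-\frac{d}{2}\delta\cdot\gg_{\dd}\cdot\aa$.

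For types $\mbox{B}_{n}$ and $\mbox{C}_{n}$ I would establish a quantum version of Theorem~\ref{thm:fpoly-proj}: with $G$, $B^{0}$ (of type $\mbox{D}_{n+1}$, resp.\ $\mbox{A}_{2n-1}$) and $\overline{B^{0}}$ as in the type $\mbox{B}_{n}$ and $\mbox{C}_{n}$ subsections, the algebra map $Z_{i}\mapsto Z_{\overline{i}}$ carries $F^{B^{0};\,dI;\,t_{0}}_{j;t}$ to $F^{\overline{B^{0}};\,D';\,\overline{t}_{0}}_{\overline{j};\overline{t}}$, where $D'$ is the diagonal matrix with $\overline{i}$-entry $d\,|\mathrm{stab}_{G}(i)|$. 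A direct check gives $D'=d\delta$ with $\delta$ the type-$\mbox{B}_{n}$ (resp.\ type-$\mbox{C}_{n}$) vector of Section~7, so this is precisely the quantum cluster structure in the statement; the projection statement itself runs exactly as for Theorem~\ref{thm:fpoly-proj} once the effect of $\pi$ on the $\hat{y}$-variables and on the quasi-commutation relations is recorded (the admissibility and strong-admissibility lemmas carrying over). One then substitutes the already-proved type-$\mbox{A}_{2n-1}$ and type-$\mbox{D}_{n+1}$ formulas and collects terms exactly as in the classical $\mbox{B}_{n}$ and $\mbox{C}_{n}$ subsections; in type $\mbox{B}_{n}$, tracking whether or not a $\nu=0$ component meets the vertex that $\sigma$ splits is what produces the asymmetric weight $(q^{-d/2}+q^{d/2})^{\rho_{\dd}(\aa)}(q^{-d}+q^{d})^{\phi_{\dd}(\aa)-\rho_{\dd}(\aa)}$.

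\textbf{Main obstacle.} The hardest part is the inductive bookkeeping in type $\mbox{D}_{n}$: Propositions~\ref{prop:fpoly1} and~\ref{prop:fpoly-coeff} by themselves only pin a coefficient down to a bar-symmetric Laurent polynomial specializing to $2^{\phi_{\dd}(\aa)}$ at $q=1$, which is not enough, so one must genuinely extract the factorization $q^{s_{\aa}}(q^{d/2}+q^{-d/2})^{\phi_{\dd}(\aa)}$ from the recursion of Theorem~\ref{thm:quantum-fpoly-rec} by an argument parallel to the explicit subrepresentation count in the classical type-$\mbox{D}_{n}$ proof. A secondary difficulty is setting up the quantum projection carefully enough that the change in quasi-commutation relations is handled correctly and the classical combinatorial analysis in the $\mbox{B}_{n}$ and $\mbox{C}_{n}$ subsections can be imported with the $q$-weights attached.
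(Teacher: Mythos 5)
Your outline correctly reuses the type $\mbox{A}_{n}$ result and correctly identifies where the real work lies, but at both places where you flag the difficulty you stop short of the idea that actually closes it, and in one place your proposed route is not the paper's and looks unsound as stated. For type $\mbox{D}_{n}$, the paper does not induct along a sink/source mutation sequence: Theorem \ref{thm:quantum-fpoly-rec} only applies when the \emph{last} mutation direction has not occurred earlier in the sequence, so it cannot be invoked at every step of a long sink/source walk in which directions repeat. Instead, the paper fixes the denominator vector $\dd=\sum_{i=p}^{n}\ee_{i}+\sum_{j=r}^{n-2}\ee_{j}$, inducts on $n-2-p$, and uses the polygon model of \cite{ca2} to engineer one mutation sequence ending in the fresh direction $p$, so that Theorem \ref{thm:quantum-fpoly-rec} yields a two-term recursion $F_{\dd}=q^{\lambda'}F_{\dd'}Z^{\aa'}+q^{\lambda''}F_{\dd''}F'_{\dd'''}Z^{\aa''}$ (Lemma \ref{typeDrec}) in which $\dd'',\dd'''$ are explicit multiplicity-free roots. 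The factorization $q^{c}(1+q^{d})^{\phi_{\dd}(\aa)}$ you call the crux is then obtained not by tracking how components of $S$ change under a single mutation, but by computing the coefficient of $Z^{\aa}$ in the product $F_{\dd''}F'_{\dd'''}$: the set $S_{\aa}$ of factorizations is parametrized by subsets of $\mathcal{C}(S)$ (Lemma \ref{lemma:S-a}), each swap of a component shifts the $q$-exponent by exactly $d$ (Lemma \ref{lemma:psiD}), and the binomial theorem gives $(1+q^{d})^{|\mathcal{C}(S)|}$; only after this is Proposition \ref{prop:fpoly-coeff} used to pin the overall power of $q$. Your proposal leaves precisely this mechanism unsupplied.

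For types $\mbox{B}_{n}$ and $\mbox{C}_{n}$ the paper does not quantize the projection of Theorem \ref{thm:fpoly-proj} at all; it proves the quantum formulas directly, by the same polygon-model recursion as in type $\mbox{D}_{n}$ (Lemmas \ref{lemma:typeC-rec1}, \ref{typeCrec}, \ref{lemma:setS_a}, \ref{lemma:psi} for $\mbox{B}_{n}$ and Lemmas \ref{lemma:typeB-rec1}, \ref{lemma:typeB1}, \ref{lemma:psiB} for $\mbox{C}_{n}$); the asymmetric factor $(q^{-d/2}+q^{d/2})^{\rho_{\dd}(\aa)}$ in type $\mbox{B}_{n}$ arises because the exponent shift in Lemma \ref{lemma:psi} is $d$ rather than $2d$ for the component containing the vertex $n$, not from a projection fiber. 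Your proposed quantum projection has a genuine gap: the map $Z_{i}\mapsto Z_{\overline{i}}$ is not a homomorphism of the ambient quantum tori at the same $q$. With your normalization (source $dI$, target $d\,\mathrm{diag}(|\mathrm{stab}_{G}(i)|)$) the exponents in $Z_{i}Z_{j}=q^{\ast}Z_{j}Z_{i}$ already disagree by a factor $2$ for $i,j\le n-1$ in the $\mbox{B}_{n}$ case, and in the $\mbox{C}_{n}$ case no uniform rescaling of the source matrix fixes all pairs simultaneously (pairs inside $[1,n-1]$ force one scaling, the pair $(n-1,n)$ forces another). So ``the projection statement runs exactly as for Theorem \ref{thm:fpoly-proj}'' is not justified; at best one could hope for a $\ZZ[q^{\pm\frac{1}{2}}]$-linear map on normalized monomials, and proving that it sends quantum $F$-polynomials to quantum $F$-polynomials would be a new theorem requiring its own argument, which your proposal does not provide.
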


\begin{example} Use the same $B^{0}$ as in Example \ref{example:typeB}, and let $D$ be the $4 \times 4$ diagonal matrix with diagonal entries $2\delta = (4, 4, 4, 2)$.  Then the quantum $F$-polynomials for the same denominator vectors are given below.

\begin{eqnarray}
F_{\ee_{2}} & = & q^{2}Z_{2} + 1 \\
F_{\ee_{2} + \ee_{3}} & = & q^{2}Z^{\ee_{2} + \ee_{3}} + q^{2}Z_{2} + 1 \\
F_{\ee_{2} + \ee_{3} + \ee_{4}} & = & qZ^{\ee_{2} + \ee_{3} + \ee_{4}} + q^{3}Z^{\ee_{2} + \ee_{4}} + qZ_{4} + q^{2}Z_{2} + 1 \\
F_{\ee_{2} + 2\ee_{3} + 2\ee_{4}} & = & q^{2}Z^{\ee_{2} + 2\ee_{3} + 2\ee_{4}} + q^{2}Z^{\ee_{3} + 2\ee_{4}} + q^{4}Z^{2\ee_{4}}  \\
\nonumber & & \hspace{0.5cm}   + \, (q + q^{3})Z^{\ee_{2} + \ee_{3} + \ee_{4}} + (q^{2} + q^{6})Z^{\ee_{2} + \ee_{3} + 2\ee_{4}} \\
\nonumber & & \hspace{0.5cm} + \, q^{6}Z^{\ee_{2} + 2\ee_{4}} + (q^{3} + q^{5})Z^{\ee_{2} + \ee_{4}} \\
\nonumber & & \hspace{0.5cm} + \, (q + q^{3})Z_{4} + q^{2}Z_{2} + 1 \\
F_{\ee_{1} + 2\ee_{2} + 2\ee_{3} + 2\ee_{4}} & = & q^{2}Z^{\ee_{1} + 2\ee_{2} + 2\ee_{3} + 2\ee_{4}} + q^{4}Z^{2\ee_{2} + 2\ee_{3} + 2\ee_{4}}   \\
\nonumber & & \hspace{0.5cm} +  \, (q^{4} + q^{8})Z^{\ee_{1} + 2\ee_{2} + \ee_{3} + 2\ee_{4}}  + (q^{6} + q^{10})Z^{2\ee_{2} + \ee_{3} + 2\ee_{4} } \\
\nonumber & & \hspace{0.5cm} + \, q^{10}Z^{\ee_{1} + 2\ee_{2} + 2\ee_{4}}  +  q^{12}Z^{2\ee_{2} + 2\ee_{4}} \\
\nonumber & & \hspace{0.5cm} + \, (q^{3} + q^{5})Z^{\ee_{1} + 2\ee_{2} + \ee_{3} + \ee_{4}} + (q^{7} + q^{9})Z^{\ee_{1} + 2\ee_{2} + \ee_{4}}   \\
\nonumber & & \hspace{0.5cm} + \, (q^{5} + q^{7})Z^{2\ee_{2} + \ee_{3} + \ee_{4}} + q^{6}Z^{\ee_{1} + 2\ee_{2}}  \\
\nonumber & & \hspace{0.5cm}  + \, (q^{9} + q^{11})Z^{2\ee_{2} + \ee_{4}}  + q^{8}Z^{2\ee_{2}} + q^{2}Z^{\ee_{1} + \ee_{2} + \ee_{3} + 2\ee_{4}}  \\
\nonumber & & \hspace{0.5cm}  + \, q^{6}Z^{\ee_{1} + \ee_{2} + 2\ee_{4}} + (q^{2} + q^{6})Z^{\ee_{2} + \ee_{3} + 2\ee_{4}}  \\
\nonumber & & \hspace{0.5cm}   + \, (q^{6} + q^{10})Z^{\ee_{2} + 2\ee_{4}} + q^{4}Z^{2\ee_{4}} + (q^{3} + q^{5})Z^{\ee_{1} + \ee_{2} + \ee_{4}} \\
\nonumber & & \hspace{0.5cm}  + \, (q + q^{3})Z^{\ee_{2} + \ee_{3} + \ee_{4}} + (q^{3} + q^{5} + q^{7} + q^{9})Z^{\ee_{2} + \ee_{4}}  \\
\nonumber & & \hspace{0.5cm} + \, q^{2}Z^{\ee_{1} + \ee_{2}} + (q + q^{3})Z_{4} + (q^{2} + q^{6})Z_{2} + 1 
\end{eqnarray}
For example, let $\dd = \ee_{1} + 2\ee_{2} + 2\ee_{3} + 2\ee_{4}$.  To verify the coefficient of $Z^{\ee_{2} + \ee_{4}}$ in $F_{\dd}$ is $q^{3} + q^{5} + q^{7} + q^{9} = q^{6}(q + q^{-1})(q^{2} + q^{-2})$, observe that Example \ref{example:typeB} shows that  $\phi_{\dd}(\ee_{2} + \ee_{4}) = 2$ and $\rho_{\dd}(\ee_{2} + \ee_{4}) = 1$.  Using the $\gg$-vector $\gg_{\dd} = \ee_{1} - 2\ee_{2} + 2\ee_{3} - 2\ee_{4}$ from the same example, it is easy to check that $-\frac{d}{2}\delta \cdot \gg_{d} \cdot (\ee_{2} + \ee_{4}) = 6$, as desired.
\end{example}

Theorem \ref{thm:quantum-classical} was proven for type $\mbox{A}_{n}$ in \cite{quantumfpoly}.  The proof in the remaining types will be given in the subsections below.

Some notation which will be used below: For $S \subset [1, n]$, write 
\begin{eqnarray}
\ee_{S} = \sum_{i \in S} \ee_{i} \in \ZZ^{n}
\end{eqnarray}

\subsection{Combinatorial realizations of cluster algebras of types $\mbox{B}_{n}, \mbox{C}_{n}, \mbox{D}_{n}$}

For each of these types, we recall from \cite{ca2} certain combinatorial realizations of cluster variables and clusters which will be useful in the proofs below. 

For types $\mbox{B}_{n}$ and $\mbox{C}_{n}$, the realization is given in terms of diagonals of the regular $(2n + 2)$-gon $\mathbb{P}_{2n + 2}$.    In order to deal with both types at once, set $r = 1$ for type $\mbox{B}_{n}$, and $r = 2$ for type $\mbox{C}_{n}$.  Let $\theta$ be the 180$^\circ$ rotation of $\PP_{2n + 2}$.  Then $\theta$ also acts on the diagonals of $\PP_{2n + 2}$.   If $a, b$ are endpoints of a diagonal, then we write $[ab]$ for the diagonal.  If $a$ is a vertex in $\PP_{2n + 2}$, then write $\overline{a} = \theta(a)$ for the opposite vertex on $\PP_{2n + 2}$.

The cluster variables are in bijective correspondence with the $\theta$-orbits of diagonals of $\PP_{2n + 2}$.  Two cluster variables are in some cluster together if and only if the diagonals in their corresponding $\theta$-orbits do not cross each other.

\begin{definition}  \label{def:max-diag-set}
Call $\mathcal{D}$ a \emph{maximal diagonal set} if $\mathcal{D}$ is a set of diagonals of $\mathbb{P}_{2n + 2}$ such that 
\begin{itemize}
\item no two elements of $\mathcal{D}$ cross;
\item $\mathcal{D}$ is closed under the action of $\theta$;
\item every diagonal outside of $\mathcal{D}$ crosses at least one of the elements of $\mathcal{D}$.
\end{itemize}
\end{definition} 
 
\noindent Maximal diagonal sets of $\mathbb{P}_{2n + 2}$ are in bijective correspondence with clusters.
  The initial cluster corresponding to $B^{0}$ is represented by the set of diagonals $\alpha_{1}, \ldots, \alpha_{n}$ and their $\theta$-orbits, which are constructed as follows:   Let $\alpha_{1}$ be any diagonal of shortest length.  Then we construct $\alpha_{2}, \ldots, \alpha_{n}$ in order so that $\alpha_{i + 1}$ shares an endpoint with $\alpha_{i}$, the other endpoints of $\alpha_{i}$, $\alpha_{i + 1}$ are the endpoints of a side of $\mathbb{P}_{2n + 2}$, and $\alpha_{i + 1}$ is clockwise (by an acute angle) to $\alpha_{i}$ if $i \rightarrow i + 1$ in $Q^{0}$ and  counterclockwise to $\alpha_{i}$ otherwise.    Note that $\alpha_{n}$ is a diameter of $\mathbb{P}_{2n + 2}$.  Let $\mathcal{D}^{0} = \{ \alpha_{1}, \ldots, \alpha_{n}, \theta(\alpha_{1}), \ldots, \theta(\alpha_{n - 1}) \}$.
 
In type $\mbox{B}_{n}$ (resp. $\mbox{C}_{n}$), the cluster variable with denominator vector $\sum_{i \in [1, n]} a_{i} \ee_{i} \in \Phi_{+}(B^{0})$ is represented by the unique $\theta$-orbit $\{ \beta, \theta(\beta) \}$ such that for each $i \in [1, n]$, the diagonal $\alpha_{i}$ (resp.  $\beta$) crosses the diagonals in $\{ \beta, \theta(\beta) \}$ (resp. $\{ \alpha_{i}, \theta(\alpha_{i}) \}$) at a total of $a_{i}$ points.
 
Next, we describe all possible cluster mutations in terms of maximal diagonal sets.  Consider a cluster which corresponds to a maximal diagonal set $\mathcal{D}$, and let $B = (b_{ij})$ be the principal part of the exchange matrix.  Let $k \in [1, n]$.    Suppose that the $k$th cluster variable corresponds to a diagonal $[ac]$ in $\mathcal{D}$.    Then there exist vertices  $c, f$ of $\mathbb{P}_{2n + 2}$ such that $[ab], [bc], [cf]$, and $[af]$ are each either a diagonal in $\mathcal{D}$ or side of $\mathbb{P}_{2n + 2}$, and $a, b, c, f$ are distinct vertices in counterclockwise order on $\mathbb{P}_{2n + 2}$.  Mutation in direction $k$ then corresponds to replacing the diagonals $[ac], \theta([ac])$ by $[bf], \theta([bf])$.  

Let $i \in [1, n]$.  If $i$ corresponds to a diameter, then
\begin{eqnarray} \label{eqn:bc1}
b_{ik} = \left\{      \begin{array}{cl} 
2/r & \mbox{if } i \mbox{ corresponds to } [cf] \mbox{ or } [ab]  \\
-2/r & \mbox{if } i \mbox{ corresponds to } [bc] \mbox{ or } [af] \\
0 & \mbox{otherwise} 
\end{array}
\right.
\end{eqnarray}
If $k$ corresponds to a diameter, then 
\begin{eqnarray} \label{eqn:bc2}
b_{ik} = \left\{      \begin{array}{cl} 
r & \mbox{if } i \mbox{ corresponds to } [cf] \mbox{ or } [ab]  \\
-r & \mbox{if } i \mbox{ corresponds to } [bc] \mbox{ or } [af] \\
0 & \mbox{otherwise} 
\end{array}
\right.
\end{eqnarray}
Otherwise, 
\begin{eqnarray} \label{eqn:bc3}
b_{ik} = \left\{      \begin{array}{cl} 
1 & \mbox{if } i \mbox{ corresponds to } [cf] \mbox{ or } [ab]  \\
-1 & \mbox{if } i \mbox{ corresponds to } [bc] \mbox{ or } [af] \\
0 & \mbox{otherwise} 
\end{array}
\right.
\end{eqnarray}

For type $\mbox{D}_{n}$, the cluster variables are in bijective correspondence with the $\theta$-orbits of diagonals of $\PP_{2n}$, where diameters may be of two different ``colors," which are denoted by $[a\overline{a}]$ and $\widetilde{[a\overline{a}]}$.   Two cluster variables are in some cluster together if and only if the diagonals in their corresponding $\theta$-orbits do not cross each other, with the additional assumption that diameters of the same color do not cross.   

Define \emph{maximal diagonal sets} in an analogous manner to Definition \ref{def:max-diag-set} so that $\mathcal{D}$ is a subset of diagonals and (colored) diameters of $\mathbb{P}_{2n}$, and ``crossing" takes into account the convention just given.  Maximal diagonal sets of $\mathbb{P}_{2n}$ are in bijective correspondence with clusters.

The initial cluster corresponding to $B^{0}$ is represented by the set of diagonals $\alpha_{1}, \ldots, \alpha_{n}$ and their $\theta$-orbits, which are constructed as follows:   Construct $\alpha_{1}, \ldots, \alpha_{n - 1}$ in $\mathbb{P}_{2n}$ in a similar manner as in the type $\mbox{C}_{n}$ setting.   Note that $\alpha_{n - 1}$ is a diameter of the first color in $\mathbb{P}_{2n}$.  If the arrow between $n - 2$ and $n$ in $Q^{0}$ is in the same direction as the arrow between $n - 2$ and $n - 1$, then let $\alpha_{n}$ be the diagonal $\tilde{\alpha}_{n - 1}$; otherwise, let $\alpha_{n}$ be the diameter with the same color as $\alpha_{n - 1}$ which shares the other endpoint of $\alpha_{n - 2}$.  Let $\mathcal{D}^{0} = \{ \alpha_{1}, \ldots, \alpha_{n}, \theta(\alpha_{1}), \ldots, \theta(\alpha_{n - 2}) \}$.
 
 The cluster variable with denominator vector $\sum_{i \in [1, n]} a_{i} \ee_{i} \in \Phi_{+}(B^{0})$ is represented by the unique $\theta$-orbit such that for each $i \in [1, n]$, the diagonals in this orbit cross the diagonals representing $\alpha_{i}$ at $a_{i}$ pairs of centrally symmetric points (where we count an intersection of two diameters of different colors and location as one such pair).

Next, we describe certain cluster mutations in terms of maximal diagonal sets.  Consider a cluster which corresponds to a maximal diagonal set $\mathcal{D}$, and let $B = (b_{ij})$ be the principal part of the exchange matrix.  Let $k \in [1, n]$.    Suppose that the $k$th cluster variable corresponds to a diagonal $\beta = [ac] \in \mathcal{D}$, or to $\beta = \widetilde{[ac]} \in \mathcal{D}$, where $c = \overline{a}$.    Assume there exist vertices  $b, f$ of $\mathbb{P}_{2n + 2}$ such that $[ab], [bc], [cf]$, and $[af]$ are each either a diagonal in $\mathcal{D}$ or side of $\mathbb{P}_{2n + 2}$, and $a, b, c, f$ are distinct vertices in counterclockwise order on $\mathbb{P}_{2n + 2}$.  Furthermore, assume that if $[ab]$ is a diameter, then $\widetilde{[ab]}$ is also in $\mathcal{D}$, and assume similar conditions with $[bc], [cf]$, and $[af]$.
If $\beta = \widetilde{[ac]}$ (resp. $\beta = [ac]$), and $\beta$ is a diameter, then $c = \overline{a}$, $b = \overline{f}$, and mutation in direction $k$ corresponds to replacing $\beta$ by the diameter $[bf]$ (resp. $\widetilde{[bf]}$).   If $\beta$ is not a diameter, then mutation in direction $k$ corresponds to replacing the $\theta$-orbit $\{\beta$, $\theta(\beta)\}$ by $\{[bf]$, $\theta([bf])\}$.

Let $i \in [1, n]$.  Then
\begin{eqnarray} \label{exchmatD}
b_{ik} = \left\{      \begin{array}{cl} 
1 & \mbox{if } i \mbox{ corresponds to } [cf], [ab], \widetilde{[cf]}, \mbox{ or } \widetilde{[ab]}  \\
-1 & \mbox{if } i \mbox{ corresponds to } [bc], [af], \widetilde{[bc]}, \mbox{ or } \widetilde{[af]} \\
0 & \mbox{otherwise} 
\end{array}
\right.
\end{eqnarray}
Note that in the above, if $i$ corresponds to $\widetilde{[ab]}$, for example, then we assume that $[ab]$ is a diameter, i.e., $b = \overline{a}$.  Likewise, the same convention applies to $\widetilde{[cf]}, \widetilde{[bc]}$, and $\widetilde{[af]}$.

\begin{remark}  In \cite{triangulated}, Fomin, Shapiro, and Thurston associate cluster algebras to  punctured Riemann surfaces.  In particular, cluster algebras of type $\mbox{D}_{n}$ may be realized in terms of triangulations of a once-punctured $n$-gon; see \cite[Example 6.7]{triangulated} for additional details.  
\end{remark}

\subsection{Type $\mbox{D}_{n}$}

Let $B^{0} = (b^{0}_{ij})$ be an acyclic $n \times n$ exchange matrix of type $\mbox{D}_{n}$.


The theorem has already been proven in \cite{quantumfpoly} for $\dd \in \Phi_{+}(B^{0})$ such that the components of $\dd$ are all either 0 or 1.  Thus, it remains to prove the theorem for $\dd$ of the form $\sum_{i = p}^{n} \ee_{i} + \sum_{j = r}^{n - 2} \ee_{j}$ ($1 \leq p < r \leq n - 1$).  Proceed by induction on $n - 2 - p$.  The base of the induction occurs when $n - 2 - p = 0$, so that $\dd = \ee_{n - 2} + \ee_{n - 1} + \ee_{n}$, and in this case, the theorem has already been proven.

Fix $\dd = \sum_{i = p}^{n} \ee_{i} + \sum_{j = r}^{n - 2} \ee_{j}$ ($1 \leq p < r \leq n - 2$), and suppose that the theorem has been proven for all denominator vectors $\sum_{i = p'}^{n} \ee_{i} + \sum_{j = r'}^{n - 2} \ee_{j}$, where $p < p' < r' \leq n - 2$.   Write $\dd = (d_{1}, \ldots, d_{n})$.

\begin{lemma} \label{typeDrec} At least one of the two statements below  is true:
\begin{enumerate}
\item For some $\lambda', \lambda'' \in \frac{1}{2}\ZZ$, $\aa', \aa'' \in \ZZ^{n}$, and $\dd', \dd'' \in \Phi_{+}(B^{0}) \cup \{ 0 \}$ such that $\dd', \dd'' < \dd$ and the $p$th component of both vectors is 0, the quantum $F$-polynomial $F_{\dd}$ satisfies

\begin{eqnarray} \label{eqn:fpolyD-rec1}
F_{\dd} = q^{\lambda'}F_{\dd'}Z^{\aa'} + q^{\lambda''}F_{\dd''}Z^{\aa''}
\end{eqnarray}

\item Let 
\begin{eqnarray} \label{eqn:dd-1}
\dd'' = \ee_{r} + \cdots + \ee_{n - 1}, \hspace{0.5cm} \dd''' = \ee_{r} + \cdots + \ee_{n - 2} + \ee_{n}
\end{eqnarray}
if the arrow between $n - 2$ and $n$ and the arrow between $n - 2$ and $n - 1$ in $Q^{0}$ are in the same direction, and let
\begin{eqnarray} \label{eqn:dd-2}
\dd'' = \ee_{r} + \cdots + \ee_{n - 2}, \hspace{0.5cm} \dd''' = \ee_{r} + \cdots + \ee_{n}
\end{eqnarray}
otherwise.  Also, let 
\begin{eqnarray}
F'_{\dd'''} = L[\gg_{\dd''}](F_{\dd'''}).
\end{eqnarray}
For some  $\aa', \aa'' \in \ZZ^{n}$, $\lambda', \lambda'' \in \frac{1}{2}\ZZ$, and $\dd' \in \Phi_{+}(B^{0}) \cup \{ 0 \}$ such that $\dd' < \dd$ and the $p$th component of $\dd'$ is 0, we have
\begin{eqnarray} \label{eqn:fpolyD-rec2}
F_{\dd} = q^{\lambda'}F_{\dd'}Z^{\aa'} + q^{\lambda''}F_{\dd''}F'_{\dd'''}Z^{\aa''}
\end{eqnarray} 

\end{enumerate}
\end{lemma}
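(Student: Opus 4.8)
The plan is to apply the quantum $F$-polynomial recursion of Theorem~\ref{thm:quantum-fpoly-rec} at a well-chosen mutation. Since the theorem has been proven in type $\mbox{D}_{n}$ for denominator vectors whose components are all $0$ or $1$, and we are proceeding by induction on $n-2-p$, the goal is to realize $\dd = \sum_{i=p}^{n}\ee_i + \sum_{j=r}^{n-2}\ee_j$ (with $1 \le p < r \le n-2$) as the denominator vector of a cluster variable obtained from the initial seed by a single mutation in a direction that has not yet been used, starting from a vertex $t$ whose cluster variables all have denominator vectors that are either smaller than $\dd$ in the natural order or of one of the special forms appearing in case (2). First I would use the combinatorial realization of the type $\mbox{D}_n$ cluster algebra in terms of $\theta$-orbits of diagonals of $\PP_{2n}$ recalled above, together with the description of the initial maximal diagonal set $\mathcal{D}^0$ and the denominator-vector recipe, to locate the diagonal $\beta$ (and $\theta(\beta)$) representing $x_\dd$. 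The key geometric observation is that in $\PP_{2n}$ the diagonal for $\dd$ can be reached from a diagonal representing a cluster variable with smaller denominator vector by a single ``flip'' of the form $[ac],\theta([ac]) \rightsquigarrow [bf],\theta([bf])$ described in the excerpt, and that the other three sides $[ab],[bc],[cf],[af]$ of the surrounding quadrilateral are either sides of $\PP_{2n}$ or diagonals representing cluster variables with the desired denominator vectors.

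Second I would translate this flip into the recursion. Let $t$ be the vertex of $\mathbb{T}_n$ corresponding to the maximal diagonal set $\mathcal{D}$ containing $[ac]$, let $k$ be the mutation direction, and let $t' = \mu_k(t)$, so that $x_\dd = x_{k;t'}$. Using the exchange matrix formula \eqref{exchmatD}, the index set $I_+$ (resp.\ $I_-$) of Theorem~\ref{thm:quantum-fpoly-rec} consists of the labels among the previously used mutations whose diagonals are $[cf],[ab],\widetilde{[cf]},\widetilde{[ab]}$ (resp.\ $[bc],[af],\widetilde{[bc]},\widetilde{[af]}$), each with $|b_{ik}|=1$. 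So each $G_j^{(\epsilon)}$ is a single $L$-translated $F$-polynomial $L[0](F_{k_j^\epsilon;t}) = F_{k_j^\epsilon;t}$, and the recursion degenerates to
\begin{eqnarray}
F_{\dd} = q^{\lambda^+}\Bigl(\prod^{\rightarrow}_{j} \hat F_j^{(+)}\Bigr)Z^{\aa^+} + q^{\lambda^-}\Bigl(\prod^{\rightarrow}_{j} \hat F_j^{(-)}\Bigr)Z^{\aa^-},
\end{eqnarray}
where each $\hat F_j^{(\epsilon)}$ is an $L[\,\cdot\,]$-twist of $F_{k_j^\epsilon;t}$ by a sum of at most one $\gg$-vector. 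The crucial point is then that the $F_{k_j^\epsilon;t}$ occurring here are among: $F$-polynomials for denominator vectors strictly below $\dd$ with vanishing $p$-th component (giving case (1)), or the specific vectors $\dd''$, $\dd'''$ of \eqref{eqn:dd-1} or \eqref{eqn:dd-2} (giving case (2)). A subtlety is that the $F$-polynomials appearing in the recursion are taken at the seed $t$, not at $t_0$; but since $t$ is reached from $t_0$ by a sequence of sink/source mutations not involving direction $k$, I would invoke part (3) of the proposition on basic facts (F-polynomials are unchanged under mutation away from their own direction) together with Theorem~\ref{thm:sinks} to identify each $F_{k_j^\epsilon;t}$ with the appropriate $F^{B^0;t_0}$-polynomial for the cluster variable sitting on that diagonal.

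Third, I would establish the dichotomy (1) versus (2). Case (2) is forced precisely when one of the diagonals in the surrounding quadrilateral is a diameter whose ``$\theta$-orbit'' collapses — geometrically, when the flip producing $x_\dd$ involves the diameter region near vertices $n-2,n-1,n$ of $\PP_{2n}$ — and in that situation the product on the $I_-$ (or $I_+$) side is exactly $F_{\dd''}\cdot L[\gg_{\dd''}](F_{\dd'''}) = F_{\dd''}F'_{\dd'''}$, which matches \eqref{eqn:fpolyD-rec2}; I would verify via \eqref{exchmatD} and the denominator recipe that the two diagonals involved carry denominator vectors $\dd''$ and $\dd'''$ exactly as in \eqref{eqn:dd-1}–\eqref{eqn:dd-2} according to the arrow directions at $n-2$. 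Otherwise all the relevant diagonals are ordinary (non-diameter) and each factor is a single $F$-polynomial for a denominator vector $< \dd$, which after absorbing the $L$-twists into monomial prefactors gives \eqref{eqn:fpolyD-rec1}. In both cases I must check the $p$-th component of $\dd', \dd'', \dd'''$ vanishes: this follows because $p$ is the ``initial'' index of the interval support of $\dd$, and the flip realizing $x_\dd$ is chosen so that the smaller denominator vectors it produces have support contained in $[p+1,n]$ (or in $[r,n-1]$, etc.), consistent with the inductive hypothesis which was stated for vectors $\sum_{i=p'}^n \ee_i + \sum_{j=r'}^{n-2}\ee_j$ with $p < p'$.

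I expect the main obstacle to be the careful case analysis of the geometry near the diameters of $\PP_{2n}$ — that is, correctly pinning down which of the three auxiliary diagonals $[ab],[bc],[cf],[af]$ become diameters (and of which color), hence which branch of Theorem~\ref{thm:quantum-fpoly-rec}'s product is a genuine product of two $F$-polynomials rather than a single one, and consequently which of the two formulas \eqref{eqn:fpolyD-rec1} or \eqref{eqn:fpolyD-rec2} holds. Verifying that the $L$-operator twists appearing in $\hat F_j^{(\epsilon)}$ are exactly $L[\gg_{\dd''}]$ in the second case (and trivial, up to an overall monomial, in all other factors) — so that $F'_{\dd'''}$ as defined in the lemma is reproduced on the nose — is the technical heart; this reduces to a bookkeeping check on the $\gg$-vectors via Theorem~\ref{thm:g-vec-classical} and the definition \eqref{def:L-function} of $L[\aa]$, which I would do in the formal proof but not here.
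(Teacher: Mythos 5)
Your plan follows the paper's own route: realize $x_{\dd}$ in the $\PP_{2n}$ diagonal model via a mutation sequence whose last step is a first-time mutation in direction $p$, apply Theorem \ref{thm:quantum-fpoly-rec} together with \eqref{exchmatD}, and split into cases according to whether a diameter appears among the sides of the quadrilateral around the new diagonal, which is exactly how the single-factor formula \eqref{eqn:fpolyD-rec1} versus the product $F_{\dd''}F'_{\dd'''}$ with the $L[\gg_{\dd''}]$ twist in \eqref{eqn:fpolyD-rec2} arises. The one piece you defer as ``the key geometric observation'' --- the explicit sequence (mutating at $n$ if $\alpha_{n-1},\alpha_{n}$ are diameters of the same color, then at $r,\ldots,n$, then at the indices $j_{1},\ldots,j_{s}$, then at $r-1,\ldots,p$), which makes the remaining quadrilateral sides boundary edges, the never-mutated $\alpha_{p-1}$, the diameters carrying $\dd'',\dd'''$, or diagonals with denominator vectors $<\dd$ vanishing in position $p$ --- is precisely what the paper's proof supplies, and your appeal to sink/source mutations and Theorem \ref{thm:sinks} is unnecessary (and not literally available) here, since all that Theorem \ref{thm:quantum-fpoly-rec} requires is that $p$ not occur earlier in the sequence.
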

\begin{proof}  We will show that there exists a sequence of cluster mutations which yields the cluster variable corresponding to $\dd$ by using the combinatorial representation given above.  Then Theorem \ref{thm:quantum-fpoly-rec} will guarantee that at least one of the equations for $F_{\dd}$ occurs.  

Note that the diagonals $\alpha_{1}, \ldots, \alpha_{n - 1}$ divide half of $\PP_{2n}$ into $n - 1$ triangles.  For $i = 1, \ldots, n - 2$, let $\Delta_{i}$ be the triangle containing $\alpha_{i}$ and $\alpha_{i + 1}$ as sides, and let $\Delta_{0}$ be the triangle with $\alpha_{1}$ as a side but not $\alpha_{2}. $   Let $v$ be the vertex opposite $\alpha_{r}$ in $\Delta_{r - 1}$, and for $i \in [1, n - 2]$, let $v_{i}$ be the vertex opposite $\alpha_{i}$ in $\Delta_{i}$.   

First, if $\alpha_{n - 1}$ and $\alpha_{n}$ are diameters of the same color, then we perform a mutation on $\mathcal{D}^{0}$ in direction $n$ so that $\alpha_{n}$ is mutated to $\tilde{\alpha}_{n - 1}$.  Then mutate  in directions $k = r, \ldots, n$.  Under this sequence of mutations, the diagonal $\alpha_{i}$ ($i \in [r, n - 2]$) is mutated to $\alpha_{i}' = [vv_{i}]$, and $\alpha_{n - 1}$ and $\tilde{\alpha}_{n - 1}$ are mutated to the diagonals  $\widetilde{[v\overline{v}]}$ and $[v\overline{v}]$.

Let $r \leq j_{s} < \ldots < j_{1} \leq n - 2$ be indices such that the arrow between $j_{\ell}$ and $j_{\ell} + 1$ is opposite the arrow between $r - 1$ and $r$ for $\ell = 1, \ldots, s$.  Next, we continue mutating in directions $k = j_{1}, \ldots, j_{s}$.  Let $w$ be the endpoint of the diagonal $\alpha_{r - 1}$ which is not equal to $v$.   Then $\alpha'_{j_{\ell}}$ is mutated to $\alpha''_{j_{\ell}} = [\overline{v}v_{j_{\ell + 1}}]$ (for $\ell \in [1, s - 1]$) and $\alpha_{j_{s}}'$ is mutated to $[\overline{v}w]$.  

Let $w_{i}$ be the vertex opposite $\alpha_{i}$ in the triangle $\Delta_{i - 1}$ for $i \in [p, r - 1]$.   Continue mutating in directions $k = r - 1, \ldots, p$.  Then $\alpha_{k}$ is mutated to $\alpha_{k}' = [\overline{v}w_{k}]$ for $k = p, \ldots, r - 1$.   Note that $\alpha_{p}'$ corresponds to the cluster variable with denominator vector $\dd$ since it crosses the diagonals $\alpha_{p}, \ldots, \alpha_{n}, \theta(\alpha_{r}), \ldots, \theta(\alpha_{n - 2})$.

Let $a_{1}, a_{2}$ be the endpoints of $\alpha_{p}$.  The diagonal $\alpha'_{p}$ is the diagonal of a quadrilateral whose sides are each in $\mathcal{D}'$ or sides of $\mathbb{P}_{2n}$.  To be more precise, the sides of the quadrilateral are $\beta_{1} = [\overline{v}a_{1}]$, $\beta_{2} = [\overline{v}a_{2}]$,  $\alpha_{p - 1}$ (if $p - 1 \geq 1$), and  $\widetilde{[v\overline{v}]}$ (if one of the $a_{i}$ equals $v$); the remaining sides (if any) of the quadrilateral are sides of $\mathbb{P}_{2n}$.

Observe that the only elements of $\mathcal{D}^{0}$ that the $\beta_{i}$ can intersect come from the list  $\alpha_{p + 1}, \ldots, \alpha_{n}, \theta(\alpha_{r}), \ldots, \theta(\alpha_{n - 2})$.   Thus, each $\beta_{i}$ corresponds to some $\dd' \in \Phi_{+}(B^{0})$ such that $\dd' < \dd$, and the $p$th component of $\dd'$ is equal to 0.

 If $a_{i} \neq v$ for $i = 1, 2$, then the diagonals $\beta_{1}, \beta_{2}$ are not diameters. It follows from Theorem \ref{thm:quantum-fpoly-rec} in conjunction with (\ref{exchmatD}) that there exists an equation of the form  (\ref{eqn:fpolyD-rec1}), where the $\dd', \dd''' \in \Phi_{+}(B^{0})$ in this equation correspond to the diagonals $\beta_{1}, \beta_{2}$.  If $a_{i} = v$ for some $i$, then observe that $[v\overline{v}]$ intersects the interiors of $\alpha_{r}, \ldots, \alpha_{n}$, which means that $[v\overline{v}]$ and $\widetilde{[v\overline{v}]}$ correspond to denominator vectors $\dd''$, $\dd'''$ as at (\ref{eqn:dd-1}) or (\ref{eqn:dd-2}).  In this case, we get an equation for $F_{\dd}$ of the form (\ref{eqn:fpolyD-rec2}).
\end{proof}

The equations for $F_{\dd}$ given in Lemma \ref{typeDrec} show that the terms of $F_{\dd}$ are subtraction-free Laurent polynomials in $Z_{1}, \ldots, Z_{n}$ with coefficients in $\ZZ[q^{\pm \frac{1}{2}}]$.  By Proposition \ref{prop:fpoly1},  setting $q = 1$ and $Z_{i} = u_{i}$ ($i \in [1, n]$) in $F_{\dd}$ yields $F_{\dd}^{cl}$, so it follows that for $\aa = (a_{1}, \ldots, a_{n}) \in \ZZ^{n}$, the monomial $Z^{\aa}$ occurs with nonzero coefficient in $F_{\dd}$ if and only if $u_{1}^{a_{1}}\ldots u_{n}^{a_{n}}$ occurs with nonzero coefficient in $F_{\dd}^{cl}$. 

To prove Theorem \ref{thm:quantum-classical}, it suffices to show that all nonzero coefficients of $F_{\dd}$ are of the form 
\begin{eqnarray} \label{coeff-formD}
q^{c_{1}}(1 + q^{d})^{c_{2}}
\end{eqnarray}
for some $c_{1}, c_{2} \in \frac{1}{2}\ZZ$.  Once this fact has been established, the proof of the theorem in type $\mbox{D}_{n}$ may be finished in the following way.  Suppose that the coefficient of $Z^{\aa}$ is of the form given at (\ref{coeff-formD}).  This expression may be rewritten as $q^{c_{1}'}(q^{-\frac{1}{2}} + q^{\frac{1}{2}})^{c_{2}}$ for
some $c_{1}' \in \frac{1}{2}\ZZ$.  Since setting $q = 1$ and $Z_{i} = u_{i}$ for $i \in [1, n]$ in $F_{\dd}$ yields $F_{\dd}^{cl}$, it follows that $c_{2} = \phi_{\dd}(\aa)$.   Using Proposition \ref{prop:fpoly-coeff}, we have that 
\begin{eqnarray}
q^{c_{1}'}(q^{-\frac{1}{2}} + q^{\frac{1}{2}})^{\phi_{\dd}(\aa)} = q^{-d \delta \cdot \gg_{\dd} \cdot \aa }q^{-c_{1}'}(q^{-\frac{1}{2}} + q^{\frac{1}{2}})^{\phi_{\dd}(\aa)}.
\end{eqnarray}
Thus, $c_{1}' = -\frac{d}{2} \delta \cdot \gg_{\dd} \cdot \aa$, as desired.

By Theorem  \ref{thm:fpoly-classical}, $Z^{\dd}$ occurs with nonzero coefficient in $F_{\dd}$, and $F_{\dd}$ has nonzero constant term.  Thus, in the expressions (\ref{eqn:fpolyD-rec1}) and (\ref{eqn:fpolyD-rec2}), one of $\aa'$ or $\aa''$ must have $p$th component 1, and the other must be equal to the 0-vector.   In (\ref{eqn:fpolyD-rec1}), this means that one of the expansions of $q^{\lambda'}F_{\dd'}Z^{\aa'}$, $q^{\lambda''}F_{\dd''}Z^{\aa''}$ contains all monomials $Z^{\aa}$ in $F_{\dd}$ with $Z_{p}$, and the other contains all monomials which do not have $Z_{p}$.    Consequently, if (\ref{eqn:fpolyD-rec1}) holds, then it follows immediately from the induction hypothesis that all monomials in $F_{\dd}$ have coefficient of the form at (\ref{coeff-formD}).
By similar reasoning, in the expression (\ref{eqn:fpolyD-rec2}), one of the expansions for $q^{\lambda'}F_{\dd'}Z^{\aa'}$, $q^{\lambda''}F_{\dd''}F'_{\dd'''}Z^{\aa''}$ contains all monomials with $Z_{p}$, and the other contains all monomials without $Z_{p}$.  By the induction hypothesis, it is clear that coefficients of the monomials in the expansion of $q^{\lambda'}F_{\dd'}Z^{\aa'}$ have the form (\ref{coeff-formD}).  The remainder of the proof of Theorem \ref{thm:quantum-classical} is devoted to showing the same statement  is true of the monomials in $F_{\dd''}F'_{\dd'''}$.

Fix the denominator vectors $\dd'', \dd'''$ as at (\ref{eqn:dd-1}) or (\ref{eqn:dd-2}), and fix $\aa = (a_{1}, \ldots, a_{n}) \in \ZZ^{n}$ such that the monomial $Z^{\aa}$ occurs with nonzero coefficient in $F_{\dd''}F'_{\dd'''}$.  Let $S_{\aa}$ denote the set of all possible pairs $(\vv, \ww) \in \ZZ^{n} \times \ZZ^{n}$ such that $\aa = \vv + \ww$, and such that $Z^{\vv}$ (resp. $Z^{\ww}$) occurs with nonzero coefficient in $F_{\dd''}$ (resp. $F'_{\dd'''}$).   The next goal  is to describe the elements that can occur in $S_{\aa}$.

 Let $S$ be the subgraph of $Q^{0}$ induced by $ \{ i \in [r, n - 2] : a_{i} = 1 \}$.  Let $\mathcal{C}(S)$ denote the set of components of $S$, excluding the component which contains $n - 2$ if at least one of the arrows between $n - 2$ and $n$ and between $n - 2$ and $n - 1$ in $Q^{0}$ is critical with respect to $(\dd, \aa)$.

\begin{lemma} \label{lemma:d-a} 
\begin{enumerate}
 \item $0 \leq a_{i} \leq 2$ for $i = [r, n - 2]$,  \\
         $0 \leq a_{i} \leq 1$ for $i = n - 1, n$,  \\
          $a_{i} = 0$ for $i  \in [1, r - 1]$.
  \item If $i \rightarrow j$ in $Q^{0}$ with $r \leq i, j \leq n - 2$, then $a_{j} \geq a_{i}$.
  \item If $n - 2 \rightarrow n$ in $Q^{0}$ (resp. $n - 2 \rightarrow n - 1$) and $a_{n - 2} = 2$, then $a_{n} = 1$ (resp. $a_{n - 1} = 1$).
    \item If $n \rightarrow n - 2$ (resp. $n - 1 \rightarrow n - 2$), then $a_{n - 2} \geq a_{n}$ (resp. $a_{n - 2} \geq a_{n - 1}$).
  \item At least one of the arrows between $n -2$ and $n - 1$ and between $n - 2$ and $n$ in $Q^{0}$ is not critical with respect to $(\dd, \aa)$.
\end{enumerate}
\end{lemma}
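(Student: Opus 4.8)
The plan is to reduce all five assertions to Theorem~\ref{thm:fpoly-classical} applied to the two \emph{smaller} roots $\dd''$ and $\dd'''$, read off the support pattern of these roots, and then run an elementary case analysis for the last (and only delicate) item. First recall that $L[\gg_{\dd''}]$ merely multiplies each monomial $Z^{\bb}$ by a power of $q$ (see (\ref{def:L-function})), so $F'_{\dd'''}$ and $F_{\dd'''}$ involve exactly the same monomials. In each of (\ref{eqn:dd-1}) and (\ref{eqn:dd-2}) the vectors $\dd''$ and $\dd'''$ are positive roots of type $\mbox{D}_{n}$ all of whose components are $0$ or $1$, so their quantum $F$-polynomials are those computed in \cite{quantumfpoly} and in particular have coefficients that are Laurent polynomials in $q^{\pm\frac12}$ with nonnegative coefficients; hence $F_{\dd''}F'_{\dd'''}$ has no cancellation, so $S_{\aa}\neq\varnothing$ and $\aa=\vv+\ww$ for any $(\vv,\ww)\in S_{\aa}$. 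Moreover, by Proposition~\ref{prop:fpoly1}, $Z^{\vv}$ (resp.\ $Z^{\ww}$) occurring in $F_{\dd''}$ (resp.\ $F_{\dd'''}$) forces $u_{1}^{v_{1}}\cdots u_{n}^{v_{n}}$ (resp.\ $u_{1}^{w_{1}}\cdots u_{n}^{w_{n}}$) to occur in $F_{\dd''}^{cl}$ (resp.\ $F_{\dd'''}^{cl}$). So it suffices to prove (1)--(5) for $\aa=\vv+\ww$, using conditions (1) and (2) of Theorem~\ref{thm:fpoly-classical} for $F_{\dd''}^{cl}$ and for $F_{\dd'''}^{cl}$ with the common quiver $Q^{0}$.

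Next I would record the support pattern: in each of (\ref{eqn:dd-1}) and (\ref{eqn:dd-2}), both of $\dd''$ and $\dd'''$ are supported inside $[r,n]$; the whole interval $[r,n-2]$ lies in the support of each of them, with all those components equal to $1$; and neither of the fork vertices $n-1$, $n$ lies in the support of both. Since Theorem~\ref{thm:fpoly-classical}(1) gives $0\le\vv\le\dd''$ and $0\le\ww\le\dd'''$, adding coordinatewise yields $a_{i}=0$ for $i\in[1,r-1]$, $0\le a_{i}\le 2$ for $i\in[r,n-2]$, and $0\le a_{i}\le 1$ for $i=n-1,n$, which is (1). For (2)--(4) the mechanism is uniform: given an arrow $i\to j$ of $Q^{0}$, the factor containing both $i$ and $j$ in its support has equal $d$-components there, so acceptability (Theorem~\ref{thm:fpoly-classical}(2)) forces monotonicity of that factor's coordinate along the arrow, while the other factor contributes $0$ to one of the two coordinates by the support pattern (Proposition~\ref{prop:acceptable}); adding the two inequalities gives the claim for $\aa$, and for (3) one upgrades the inequality to an equality via $a_{i}\le d_{i}$. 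Each is a short check keyed to which of $\dd''$, $\dd'''$ contains the two endpoints of the arrow in question.

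The substantive part, and the step I expect to be the main obstacle, is (5). Since $d_{n-2}=2$ and $d_{n-1}=d_{n}=1$, Definition~\ref{def:acceptable} shows that an arrow incident to $n-2$ can be critical with respect to $(\dd,\aa)$ only when $a_{n-2}=1$; so assume $a_{n-2}=1$, whence exactly one of $v_{n-2},w_{n-2}$ equals $1$, and suppose for contradiction that \emph{both} arrows at $n-2$ are critical. I would split according to whether the two arrows at $n-2$ point the same way (case (\ref{eqn:dd-1})) or opposite ways (case (\ref{eqn:dd-2})). In each sub-case, criticality prescribes the values of $a_{n-1}$ and $a_{n}$ (each $0$ or $1$ depending on the orientation); the support pattern then kills the contribution of one factor to the relevant fork coordinates among $v_{n-1},w_{n-1},v_{n},w_{n}$; and finally acceptability of the two arrows at $n-2$ inside the factor controlling each of them propagates a single coordinate (most often $w_{n-2}$) to two incompatible values, the contradiction. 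There are four orientation configurations, but the $n-1\leftrightarrow n$ symmetry of type $\mbox{D}_{n}$ --- respected by $\dd$, $\dd''$ and $\dd'''$ --- cuts this down, and each remaining configuration is an elementary chain of the inequalities supplied by Theorem~\ref{thm:fpoly-classical}. Carrying out these sub-cases correctly is the only delicate point; everything else is bookkeeping.
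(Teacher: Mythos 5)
Your proposal is correct and follows essentially the same route as the paper: parts (1)--(4) are precisely the paper's argument (component bounds and acceptability from Theorem \ref{thm:fpoly-classical} applied to each factor, keyed to which of $\dd''$, $\dd'''$ supports the endpoints of the arrow in question), and part (5) is the same proof by contradiction, using the values of $a_{n-1}$, $a_{n}$ forced by criticality together with acceptability inside the factor whose support contains the relevant fork vertex. The only cosmetic difference is in the same-direction configuration \eqref{eqn:dd-1}, where the contradiction lands on $v_{n-2}+w_{n-2}=a_{n-2}=1$ (the two coordinates get pinned incompatibly with this sum) rather than on a single coordinate taking two incompatible values, but this follows at once from the very inequalities you list.
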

\begin{proof}  Let $(\vv, \ww) \in S_{\aa}$, and write $\vv = (v_{1}, \ldots, v_{n}), \ww = (w_{1}, \ldots, w_{n})$.  Statement (1) follows immediately from part (1) of Theorem \ref{thm:fpoly-classical}.  Part (2) of the same theorem implies that if $i \rightarrow j$ in $Q^{0}$ and $i, j \in [r, n - 2]$, then $v_{i} \leq v_{j}$ and $w_{i} \leq w_{j}$, so statement (2) of the lemma also follows.  If $n - 2 \rightarrow n$ in $Q^{0}$ and $a_{n - 2} = 2$, then $v_{n - 2} = w_{n - 2} = 1$, so using part (2) of Theorem \ref{thm:fpoly-classical} again, it follows that $1 = v_{n - 2} \leq v_{n} \leq a_{n}$ if (\ref{eqn:dd-1}) holds and $1 = w_{n - 2} \leq w_{n} \leq a_{n}$ if (\ref{eqn:dd-2}) holds.  Similar reasoning applies to the case when $n - 2 \rightarrow n - 1$ in $Q^{0}$.  This proves statement (3) of the lemma.   For statement (4), if $n \rightarrow n - 2$ in $Q^{0}$, then parts (1) and (2) of Theorem \ref{thm:fpoly-classical} imply that $v_{n} = 0$ and  $w_{n} \leq w_{n - 2} \leq a_{n - 2}$, so  $a_{n} \leq a_{n - 2}$.  Similar reasoning applies to the situation where $n - 1 \rightarrow n - 2$ in $Q^{0}$.   Finally, for the final statement of the lemma, suppose for the sake of contradiction that both of the arrows between $n -2$ and $n - 1$ and between $n - 2$ and $n$ are critical.    We consider the case when (\ref{eqn:dd-1}) holds and leave the other case as an easy check.  If $n - 2 \rightarrow n$ and $n - 2 \rightarrow n - 1$ in $Q^{0}$, then $v_{n - 2} \leq v_{n - 1}$ and $w_{n - 2} \leq w_{n}$.  The fact that the stated arrows are critical implies that $a_{n - 2} = 1$ and $a_{n} = a_{n - 1} = 0$.  This means that $v_{n - 1} = 0$ and $w_{n} = 0$, and either $v_{n - 2} = 1$ or $w_{n - 2} = 1$, contradiction.  Now suppose that $n \rightarrow n - 2$ and $n - 1 \rightarrow n - 2$ in $Q^{0}$.  Then $v_{n - 1} \leq v_{n - 2}$ and $w_{n} \leq w_{n - 2}$.  The fact that the arrows are critical means that $a_{n - 2} = 1$ and $a_{n} = a_{n - 1} = 1$, so $v_{n - 1} = w_{n} = 1$, and either $v_{n - 2} = 0$ or $w_{n - 2} = 0$, contradiction.
\end{proof}

\

Let 
\begin{eqnarray} \label{eqn:vv-ww1}
\vv^{(0)} & =  &\ee_{n - 1} +  \sum_{i \in [r, n - 2], \, a_{i} \geq 1} \ee_{i} \\
\nonumber \ww^{(0)} & =  & a_{n}\ee_{n} + \sum_{i \in [r, n - 2], \, a_{i} = 2} \ee_{i}
\end{eqnarray}
if $\dd'', \dd'''$ are as at (\ref{eqn:dd-1}) and $a_{n - 1} = 1$; let
\begin{eqnarray} \label{eqn:vv-ww2}
\vv^{(0)} & =  &  \sum_{i \in [r, n - 2], \, a_{i} = 2} \ee_{i} \\
\nonumber \ww^{(0)} & =  & a_{n} \ee_{n} +  \sum_{i \in [r, n - 2], \, a_{i} \geq 1} \ee_{i} 
\end{eqnarray}
if $\dd'', \dd'''$ are as at (\ref{eqn:dd-1}) and $a_{n - 1} = 0$; let
\begin{eqnarray} \label{eqn:vv-ww3}
\vv^{(0)} & =  &  \sum_{i \in [r, n - 2], \, a_{i} = 2} \ee_{i} \\
\nonumber \ww^{(0)} & =  & a_{n - 1}\ee_{n - 1} + a_{n}\ee_{n} +  \sum_{i \in [r, n - 2], \, a_{i} \geq 1} \ee_{i} 
\end{eqnarray}
if $\dd'', \dd'''$ are as at (\ref{eqn:dd-2}) and $a_{n - 1} = 1$; finally, let 
\begin{eqnarray}  \label{eqn:vv-ww4}
\vv^{(0)} & =  &  \sum_{i \in [r, n - 2], \, a_{i} \geq 1} \ee_{i} \\
\nonumber \ww^{(0)} & =  & a_{n - 1}\ee_{n - 1} + a_{n}\ee_{n} +  \sum_{i \in [r, n - 2], \, a_{i} = 2} \ee_{i} 
\end{eqnarray} 
if $\dd'', \dd'''$ are as at (\ref{eqn:dd-2}) and $a_{n - 1} = 0$.

\begin{lemma} \label{lemma:S-a}
If  $\vv^{(0)}, \ww^{(0)}$ are as at (\ref{eqn:vv-ww1}) or (\ref{eqn:vv-ww4}), then 
\begin{eqnarray}
S_{\aa} = \{ (\vv^{(0)} - \aa', \ww^{(0)} + \aa')  :  J \subset \mathcal{C}(S), \aa' = \sum_{C \in J} \ee_{C} \}.\end{eqnarray}
Otherwise, 
\begin{eqnarray}
S_{\aa} = \{ (\vv^{(0)} + \aa', \ww^{(0)} - \aa')  :  J \subset \mathcal{C}(S), \aa' = \sum_{C \in J} \ee_{C} \}.
\end{eqnarray}

\end{lemma}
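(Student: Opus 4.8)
The plan is to reduce everything to the support characterization provided by Theorem~\ref{thm:fpoly-classical} in type $\mbox{D}_{n}$ (which is already proven at this point) together with Lemma~\ref{lemma:d-a}. Since $L[\gg_{\dd''}]$ multiplies each monomial $Z^{\bb}$ only by a power of $q$, it does not affect which monomials occur, so $Z^{\ww}$ has nonzero coefficient in $F'_{\dd'''}$ exactly when it does in $F_{\dd'''}$, and by Proposition~\ref{prop:fpoly1} exactly when $u^{\ww}$ occurs in $F^{cl}_{\dd'''}$; likewise for $F_{\dd''}$. In both (\ref{eqn:dd-1}) and (\ref{eqn:dd-2}) every coordinate of $\dd''$ and of $\dd'''$ is $0$ or $1$, so in Theorem~\ref{thm:fpoly-classical} condition (3) is automatic (the relevant subgraph is empty) and conditions (4), (5) are vacuous in type $\mbox{D}_{n}$. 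Hence $Z^{\vv}$ (resp. $Z^{\ww}$) has nonzero coefficient in $F_{\dd''}$ (resp. $F'_{\dd'''}$) if and only if $0\le\vv\le\dd''$ (resp. $0\le\ww\le\dd'''$) and every arrow of $Q^{0}$ is acceptable with respect to $(\dd'',\vv)$ (resp. $(\dd''',\ww)$). So the task is to describe all decompositions $\aa=\vv+\ww$ with $0\le\vv\le\dd''$, $0\le\ww\le\dd'''$, and all arrows of $Q^{0}$ acceptable for both pairs.

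First I would localize the decomposition to the coordinates in $[r,n-2]$. For $i\in[1,r-1]$, Lemma~\ref{lemma:d-a}(1) gives $a_i=0$ and $d''_i=d'''_i=0$, so $v_i=w_i=0$. For $i\in\{n-1,n\}$, exactly one of $d''_i$, $d'''_i$ is $0$ (depending on whether we are in case (\ref{eqn:dd-1}) or (\ref{eqn:dd-2})), which forces the corresponding coordinate of $\vv$ or of $\ww$ to $0$ and the other to $a_i$. For $i\in[r,n-2]$ we have $d''_i=d'''_i=1$, so $(v_i,w_i)$ is forced to $(0,0)$ if $a_i=0$ and to $(1,1)$ if $a_i=2$, while if $a_i=1$ there are exactly two options, $(1,0)$ and $(0,1)$. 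Thus a decomposition $\aa = \vv+\ww$ is the same thing as a choice of a subset $T\subseteq S$ of the vertex set of $S$ (recall $S$ is induced on $\{i\in[r,n-2]:a_i=1\}$), namely $T=\{i:(v_i,w_i)=(1,0)\}$, with $\ww=\aa-\vv$ then determined.

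Next I would impose acceptability. For an arrow between $i,j\in[r,n-2]$, say $i\to j$, acceptability for $(\dd'',\vv)$ says $v_i\le v_j$ and for $(\dd''',\ww)$ says $w_i\le w_j$ (here $[d''_i-d''_j]_+=0$); combined with $a_i\le a_j$ from Lemma~\ref{lemma:d-a}(2) and the forced values above, this is automatic unless $a_i=a_j=1$, in which case it says $i\in T\iff j\in T$ (and the same conclusion holds if the arrow is $j\to i$). Hence $T$ must be a union of connected components of $S$. The only remaining conditions are acceptability of the two ``fork'' arrows, between $n-2$ and $n-1$ and between $n-2$ and $n$; here I would run a short case analysis on case (\ref{eqn:dd-1}) versus (\ref{eqn:dd-2}), on the directions of those two arrows, and on $a_{n-2}\in\{0,1,2\}$, $a_{n-1},a_n\in\{0,1\}$, using Lemma~\ref{lemma:d-a}(3)--(5). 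When $n-2\notin S$ (i.e. $a_{n-2}\ne1$), parts (3) and (4) of Lemma~\ref{lemma:d-a} make all fork conditions hold automatically. When $n-2\in S$, writing $C_0$ for the component of $S$ containing $n-2$, the two fork conditions say precisely that $C_0$ must lie on one prescribed side if at least one of the two fork arrows is critical with respect to $(\dd,\aa)$, and that $C_0$ is otherwise free; Lemma~\ref{lemma:d-a}(5) is exactly what guarantees the two fork arrows are not both critical, so the prescribed side is well defined. Since $\mathcal{C}(S)$ is, by its definition, the set of components of $S$ with $C_0$ deleted precisely when a fork arrow is critical, this shows that the legal subsets $T$ are exactly those obtained from one fixed legal subset by flipping (adjoining or deleting) an arbitrary union of components in $\mathcal{C}(S)$.

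Finally I would match the fixed legal subset with the vectors $\vv^{(0)},\ww^{(0)}$ of (\ref{eqn:vv-ww1})--(\ref{eqn:vv-ww4}). A direct inspection shows that (\ref{eqn:vv-ww1}) and (\ref{eqn:vv-ww4}) are exactly the configurations (case (\ref{eqn:dd-1}) with $a_{n-1}=1$, and case (\ref{eqn:dd-2}) with $a_{n-1}=0$) in which placing every vertex of $S$ — in particular $C_0$, when present — on the $\vv$-side is legal, and there $\vv^{(0)}$ encodes the choice $T=S$; in the remaining cases (\ref{eqn:vv-ww2}) and (\ref{eqn:vv-ww3}), $\vv^{(0)}$ encodes the legal choice $T=\varnothing$. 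Flipping $J\subseteq\mathcal{C}(S)$ then removes $\sum_{C\in J}\ee_C$ from $\vv^{(0)}$ and adds it to $\ww^{(0)}$ in the first situation, and does the reverse in the second, which are exactly the two formulas in the statement. The routine but slightly lengthy part — and the main place an error could hide — is the fork-arrow case analysis of the previous paragraph, including the bookkeeping that matches each configuration to the correct one of (\ref{eqn:vv-ww1})--(\ref{eqn:vv-ww4}) and verifies that $\vv^{(0)}$ as written is itself a legal decomposition; everything else is immediate from Theorem~\ref{thm:fpoly-classical} in type $\mbox{D}_{n}$ and from Lemma~\ref{lemma:d-a}.
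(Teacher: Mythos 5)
Your proposal is correct and takes essentially the same route as the paper's own proof: both reduce membership in $S_{\aa}$ to the acceptability conditions of Theorem \ref{thm:fpoly-classical} (in type $\mbox{D}$, where only conditions (1)--(2) are relevant since $\dd'',\dd'''$ have entries $0$ or $1$) together with Lemma \ref{lemma:d-a}, force the coordinates with $a_i\in\{0,2\}$ and $i\in\{n-1,n\}$, show that the components of $S$ move as blocks, and settle the two fork arrows at $n-2$ via criticality and Lemma \ref{lemma:d-a}(5). The fork-arrow case analysis you defer is precisely what the paper writes out (its Cases 5--6 for $\vv$, the analogous checks for $\ww$, and the converse argument forcing the $(n-2)$ entry when a fork arrow is critical), and your asserted outcomes --- the all-of-$S$ choice being the base point in cases \eqref{eqn:vv-ww1} and \eqref{eqn:vv-ww4} and the empty choice in cases \eqref{eqn:vv-ww2} and \eqref{eqn:vv-ww3} --- agree with that analysis.
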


\begin{proof}
Assume that the arrows between $n - 2$ and $n - 1$ and between $n - 2$ and $n$ are in the same direction, so that $\dd'', \dd'''$ are as at (\ref{eqn:dd-1}).  (The proof when $\dd'', \dd'''$ are as at (\ref{eqn:dd-2}) is similar.)  

Let $\aa' = \sum_{C \in J} \ee_{C}$ for some $J \subset \mathcal{C}(S)$, and let $\vv = \vv^{(0)} \mp \aa' = (v_{1}, \ldots, v_{n})$, $\ww = \ww^{(0)} \pm \aa' = (w_{1}, \ldots, w_{n})$ (taking the top signs in the situation of (\ref{eqn:vv-ww1}) and the bottom signs otherwise).   First, we will show that $Z^{\vv}$ occurs with nonzero coefficient in $F_{\dd''}$.    Let $i, j \in [r, n - 1]$ with $i \rightarrow j$ in $Q^{0}$.   Then we need to show that $v_{j} \geq v_{i}$. 

\

\textit{Case 1:}  $i, j \in S$ \\
Then $i, j$ are in the same component of $S$, which means that the $i$th and $j$th components of $\vv^{(0)}$ and hence of $\vv$ are equal.   

\

\textit{Case 2:} $i \in S$, $j \in [r, n- 2] - S$ \\
Then $a_{i} = 1$, so $a_{j} = 2$ by (1) and (2) of Lemma \ref{lemma:d-a}.  In this case, $v_{i} \leq 1 = v_{j}$.

\

\textit{Case 3:} $i \in [r, n - 2] -  S$, $j \in S$ \\
Then $a_{j} = 1$, so $a_{i} = 0$ by (1) and (2) of Lemma \ref{lemma:d-a}.  Thus, $v_{i} = 0 \leq v_{j}$.

\

\textit{Case 4:} $i, j \in [r, n - 2] - S$ \\
Then $a_{i} = 0$ or $a_{i} = 2$.  If $a_{i} = 0$, then $v_{i} = 0$, so $v_{i} \leq v_{j}$.  If $a_{i} = 2$, then $a_{j} = 2$ by (2) of Lemma \ref{lemma:d-a}, which means that $v_{j} = 1 \geq v_{i}$.

\

\textit{Case 5:} $n - 2 \rightarrow n - 1$ in $Q^{0}$.  \\
If $a_{n - 2} = 0$, then $v_{n - 2} = 0 \leq v_{n - 1}$.  If $a_{n - 2} = 2$, then $a_{n - 1} = 1$ by (3) of Lemma \ref{lemma:d-a}, so $v_{n - 1} = 1 \geq v_{n - 2}$.  If $a_{n - 2} = 1$ and $a_{n - 1} = 1$, then the $(n - 1)$th component of $\vv^{(0)}$ and hence of $\vv$ is equal to 1.  If $a_{n - 2} = 1$ and $a_{n - 1} = 0$, then the $(n - 1)$th and $(n - 2)$th components of $\vv^{(0)}$ are both equal to 0.  Since $n - 2 \rightarrow n - 1$ in $Q^{0}$ is critical with respect to $(\dd, \aa)$, the statement for $\vv^{(0)}$ also holds for $\vv$.

\

\textit{Case 6:} $n - 1 \rightarrow n - 2$ in $Q^{0}$ \\
If $a_{n - 1} = 0$, then $v_{n - 1} = 0 \leq v_{n - 2}$.  Suppose that $a_{n - 1} = 1$. Then $a_{n - 2} \geq 1$ by (4) of Lemma \ref{lemma:d-a}, so the $(n - 1)$th and $(n - 2)$th components of $\vv^{(0)}$ are both equal to 1.     If $a_{n - 2} = 1$, then  $n - 1 \rightarrow n - 2$ in $Q^{0}$ is critical with respect to $(\dd, \aa)$, so the $(n - 1)$th and $(n - 2)$th components of $\vv$ are also equal to 1.  If $a_{n - 2} = 2$, then $v_{n - 2} = 1$, so $v_{n - 1} \leq v_{n - 2}$.

\

Next, we need to prove that $Z^{\ww}$ occurs with nonzero coefficient in $F_{\dd'''}'$.  This means proving that if $i, j \in [r, n - 2] \cup \{ n \}$ with $i \rightarrow j$ in $Q^{0}$, then $w_{j} \geq w_{i}$.  If $i, j \in [r, n - 2]$, then the proof is similar to the proof that $v_{j} \geq v_{i}$ above given in Cases 1 to 4.

Suppose $n - 2 \rightarrow n$ in $Q^{0}$.  If $a_{n - 2} = 0$ or $2$, then the proof is similar to Case 5 above.  If $a_{n - 2} = 1$ and $a_{n} = 0$, then (5) of Lemma \ref{lemma:d-a} implies that $n - 2 \rightarrow n - 1$ is not critical, so $a_{n - 1} = 1$.  Thus, $w_{n - 2} = 0 \leq w_{n}$.   If $a_{n} = 1$, then $w_{n} = 1 \geq w_{n - 2}$.

Suppose $n \rightarrow n - 2$ in $Q^{0}$.   If $a_{n} = 0$, then $w_{n} = 0 \leq w_{n - 2}$.  Suppose that $a_{n} = 1$. Then $w_{n} = 1$, and  $a_{n - 2} \geq 1$ by (4) of Lemma \ref{lemma:d-a}.    If $a_{n - 2} = 2$, then $w_{n - 2} = 1 \geq w_{n}$.  If $a_{n - 2} = 1$, then $n \rightarrow n - 2$ is critical, so by (5) of Lemma \ref{lemma:d-a}, we must have $a_{n - 1} = 0$, which means that $w_{n - 2} = 1 \geq w_{n}$, as desired.  This proves that $(\vv, \ww) \in S_{\aa}$.

Now suppose that $(\vv, \ww) \in S_{\aa}$ with $\vv = (v_{1}, \ldots, v_{n})$, $\ww = (w_{1}, \ldots, w_{n})$.   Clearly, the $i$th component of $\vv^{(0)}$ is equal to the $i$th component of $\vv$ whenever $a_{i} = 0$ or $2$, and for $i = n - 1$ and $i = n$.   Note that if $i, j \in S$ and $i \rightarrow j$ in $Q^{0}$, then $v_{j} \geq v_{i}$ and $w_{i} \geq w_{j}$; since $v_{i} + w_{i} = 1$ and $v_{j} + w_{j} = 1$, one may show that  $v_{i} = v_{j}$, $w_{i} = w_{j}$.   It follows that if $i, j$ are in the same component of $S$, then $v_{i} = v_{j}$.   

It remains to show that if either the arrow between $n - 2$ and $n - 1$ in $Q^{0}$ or the arrow between $n - 2$ and $n$ in $Q^{0}$ is critical, then the $(n - 2)$th components of $\vv$ and $\vv^{(0)}$ are equal.  Suppose that $n - 2 \rightarrow n - 1$ and $n - 2 \rightarrow n$ in $Q^{0}$.  If $n - 2 \rightarrow n - 1$ is critical, then $n - 2 \rightarrow n$ is not critical by (5) of Lemma \ref{lemma:d-a}, which means that $a_{n - 2} = 1$, $a_{n - 1} = 0$, and $a_{n} = 1$.  Since $v_{n - 2} \leq v_{n - 1}$, this means that $v_{n - 2} = 0$.    If $n - 2 \rightarrow n$ is critical, then $n - 2 \rightarrow n - 1$ is not critical, and this means that $a_{n - 2} = 1$, $a_{n - 1} = 1$, and $a_{n} = 0$.  Since $w_{n - 2} \leq w_{n} = 0$, it follows that $w_{n - 2} = 0$ and hence, $v_{n - 2} = 1$.  The case where $n - 1 \rightarrow n - 2$ and $n \rightarrow n - 2$ in $Q^{0}$ is similar.
\end{proof}

\

For $(\vv, \ww) \in S_{\aa}$, let 
\begin{eqnarray}
\psi_{\dd''}(\vv) & = & -\frac{d}{2}(\gg_{\dd''}\cdot \vv) \\
\psi'_{\dd'''}(\ww) & = & -\frac{d}{2}(\gg_{\dd'''}\cdot \ww) - d(\gg_{\dd''}\cdot \ww). \\
\end{eqnarray}
The coefficients of $Z^{\vv}$ in $F_{\dd''}$ and $Z^{\ww}$ in $F'_{\dd'''}$ are given by $q^{\psi_{\dd''}(\vv)}$ and $q^{\psi'_{\dd'''}(\ww)}$, respectively.

Define a skew-symmetric bilinear form $\langle \,\cdot\, , \cdot \,\rangle: \ZZ^{n} \times \ZZ^{n} \rightarrow \frac{1}{2}\ZZ$ by 
\begin{eqnarray}\langle \ee_{i}, \ee_{j} \rangle = \frac{db_{ij}^{0}}{2}.
\end{eqnarray} 

Write
\begin{eqnarray}
\tilde{\psi}(\vv, \ww) = \psi_{\dd''}(\vv) + \psi'_{\dd'''}(\ww) + \langle \vv, \ww \rangle.
\end{eqnarray}
By (\ref{eqn:Z-monomial}), 
\begin{eqnarray}
q^{\psi_{\dd'}(\vv)}Z^{\vv} \cdot q^{\psi'_{\dd'''}(\ww)}Z^{\ww} = q^{\tilde{\psi}(\vv, \ww)}Z^{\aa}.
\end{eqnarray}

\begin{lemma} \label{lemma:psiD}
Let $[s_{1}, s_{2}] \in \mathcal{C}(S)$, and let $(\vv, \ww) \in S_{\aa}$ with $\vv = (v_{1}, \ldots, v_{n})$ and $\ww = (w_{1}, \ldots, w_{n})$ such that $v_{i} = 1$ for all $i \in [s_{1}, s_{2}]$.  Write $
\aa' = \sum_{i \in [s_{1}, s_{2}]} \ee_{i}$.
Then 
\begin{eqnarray}
\tilde{\psi}(\vv - \aa', \ww + \aa') = d + \tilde{\psi}(\vv, \ww).
\end{eqnarray}
\end{lemma}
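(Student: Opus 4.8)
The plan is to convert the claimed identity — an equality of $q$-exponents, each a rational multiple of $d$ — into a purely combinatorial count on the Dynkin diagram of type $\mbox{D}_{n}$, and then to verify that count edge by edge.

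First I would expand the difference by linearity. Put $\ff:=\dd''+\dd'''$; a one-line check in both of the cases \eqref{eqn:dd-1}, \eqref{eqn:dd-2} gives $\ff=2(\ee_r+\cdots+\ee_{n-2})+\ee_{n-1}+\ee_n$. Since $\psi_{\dd''}$ and $\psi'_{\dd'''}$ are $\ZZ$-linear, $\langle\,\cdot\,,\cdot\,\rangle$ is bilinear and skew-symmetric (so $\langle\aa',\aa'\rangle=0$), and the $\gg$-vector formula of Theorem \ref{thm:g-vec-classical} is $\ZZ$-linear in $\dd$ (so that $\gg_{\dd''}+\gg_{\dd'''}$ equals that same formula evaluated at $\ff$, which I write $\gg_\ff$), one obtains, with $\aa=\vv+\ww$,
\begin{align*}
\tilde\psi(\vv-\aa',\ww+\aa')-\tilde\psi(\vv,\ww)
  &= -\frac d2\,\gg_\ff\cdot\aa'+\langle\vv,\aa'\rangle-\langle\aa',\ww\rangle \\
  &= -\frac d2\,\gg_\ff\cdot\aa'+\langle\aa,\aa'\rangle .
\end{align*}
Using $\langle\aa,\aa'\rangle=\frac d2\sum_{i,j}b^0_{ij}a_ia'_j$ and $\aa'=\sum_{i\in[s_1,s_2]}\ee_i$, together with $(\gg_\ff)_j=-f_j+\sum_{i:\,j\to i}f_i$ and $\sum_ib^0_{ij}a_i=\sum_{i:\,j\to i}a_i-\sum_{i:\,i\to j}a_i$ (recall that $i\to j$ in $Q^0$ means $b^0_{ji}>0$), the lemma reduces to the $q$-free identity
\begin{align*}
\sum_{i,j}b^0_{ij}a_ia'_j-\gg_\ff\cdot\aa'
  &= \sum_{j=s_1}^{s_2}\Big(f_j-\sum_{i:\,j\to i}(f_i-a_i)-\sum_{i:\,i\to j}a_i\Big) \\
  &= 2 .
\end{align*}

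Next I would organize this sum by the edges of the Dynkin diagram of $\mbox{D}_{n}$ meeting $C:=[s_1,s_2]$, using that $C$ is an interval of the path $r,r+1,\dots,n-2$, that $f_i=2$ and $a_i=1$ for every $i\in C$, and the shape of $\ff$ just computed. The diagonal terms contribute $\sum_{j\in C}f_j=2|C|$; each of the $|C|-1$ internal edges of $C$ contributes $-2$, regardless of orientation; so these already add up to $2$, and it remains to show that every boundary edge of $C$ contributes $0$. A boundary edge $\{i,j\}$ with $j\in C$, $i\notin C$ contributes $-(f_i-a_i)$ if $j\to i$, and $-a_i$ if $i\to j$, so I must establish $f_i=a_i$ in the first case and $a_i=0$ in the second. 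If $i\in[r,n-2]$ (this covers the left neighbour $s_1-1$ when $s_1>r$ and the right neighbour $s_2+1$ when $s_2<n-2$) then $f_i=2$ and $a_i\in\{0,2\}$ since $i\notin C$, and Lemma \ref{lemma:d-a}(2) applied to the arrow between $i$ and $j$ forces $a_i\ge a_j=1$, hence $a_i=2=f_i$, when $j\to i$, and $a_i\le a_j=1$, hence $a_i=0$, when $i\to j$. If $s_1=r$ the left neighbour is $r-1$, where $f_{r-1}=a_{r-1}=0$ by Lemma \ref{lemma:d-a}(1) and both conditions hold trivially. Finally, if $s_2=n-2$ the remaining boundary edges join $n-2$ to the leaves $n-1$ and $n$, where $f_i=1$ and $d_i=1$; here I would use that, since $C\in\mathcal{C}(S)$ and $n-2\in C$, neither arrow between $n-2$ and $\{n-1,n\}$ is critical with respect to $(\dd,\aa)$ (that is exactly the case in which the component of $n-2$ would have been removed from $\mathcal{C}(S)$), and a direct reading of Definition \ref{def:acceptable}, with $(d_{n-2},a_{n-2})=(2,1)$, shows that the arrow $n-2\to i$ is critical precisely when $a_i=0$ while $i\to n-2$ is critical precisely when $a_i=1$; hence non-criticality forces $a_i=1$ (so $f_i=a_i$) in the former case and $a_i=0$ in the latter. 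This finishes the edge count, and the lemma follows. The step I expect to be the real work is this last boundary analysis: one has to keep the arrow-direction convention straight and invoke precisely the right statement at each kind of boundary vertex — Lemma \ref{lemma:d-a}(2) inside the path, non-criticality at the two fork edges — and it is at the fork vertices $n-1,n$ that the hypothesis ``$[s_1,s_2]\in\mathcal{C}(S)$'' (rather than merely a component of $S$) is essential.
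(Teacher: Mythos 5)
Your proof is correct and follows essentially the same route as the paper: reduce the claim to the identity $-\frac{d}{2}(\gg_{\dd''}+\gg_{\dd'''})\cdot\aa' + \langle\aa,\aa'\rangle = d$ and evaluate it using Theorem \ref{thm:g-vec-classical}, Lemma \ref{lemma:d-a}, and the non-criticality of the arrows at the fork guaranteed by $[s_{1},s_{2}]\in\mathcal{C}(S)$. The only real difference is organizational: your uniform edge-by-edge count on the Dynkin diagram (diagonal terms $2|C|$, internal edges $-2$ each, boundary edges $0$) treats in one stroke what the paper splits into the cases $s_{2}<n-2$ and $s_{2}=n-2$.
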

\begin{proof}
Note that 
\begin{eqnarray}  \label{eqn:phi-tilde}
\tilde{\psi}(\vv - \aa', \ww + \aa') - \tilde{\psi}(\vv, \ww) = -\frac{d}{2}(\gg_{\dd''} +  \gg_{\dd'''}) \cdot \aa' + \langle \aa, \aa' \rangle.
\end{eqnarray}

We show that the right hand side of this expression is equal to $d$.  
Assume that $\dd'', \dd'''$ are as at (\ref{eqn:dd-1}).  (The proof when $\dd'', \dd'''$ are as at (\ref{eqn:dd-2}) is similar.)  Also, assume that $s_{1} \geq 2$.  (The proof below may be easily modified in the case that $s_{1} = 1$ by omitting any expression that contains $s_{1} - 1$ as a subscript.)  First, suppose that $s_{2} < n - 2$.  By Theorem \ref{thm:g-vec-classical}, 
\begin{eqnarray}
(-\gg_{\dd''} - \gg_{\dd'''})\cdot \aa' =  2(s_{2} - s_{1} + 1) - \sum_{\begin{array}{cc} i \in [r, n - 2] \cup \{ n \} \\ j \in [s_{1}, s_{2}] \end{array}}  [-b_{ji}^{0}]_{+} \\
\nonumber - \sum_{i \in [r, n - 1], \, j \in [s_{1}, s_{2}]} [-b_{ji}^{0}]_{+}
\end{eqnarray}
Since $[-b^{0}_{i, i + 1}]_{+} = 1$ or $[-b^{0}_{i + 1, i}]_{+} = 1$ (but not both) for each $i \in [s_{1}, s_{2} - 1]$, we have
\begin{eqnarray}
\sum_{i, j \in [s_{1}, s_{2}]} [-b^{0}_{ji}]_{+} = s_{2} - s_{1}.
\end{eqnarray}
Thus,
\begin{eqnarray}
(-\gg_{\dd''} - \gg_{\dd'''})\cdot \aa' = 2 - 2[-b^{0}_{s_{1}, s_{1} - 1}]_{+}\epsilon_{s_{1}} - 2[-b^{0}_{s_{2}, s_{2} + 1}]_{+},
\end{eqnarray}
where $\epsilon_{s_{1}} = 1$ if $s_{1} - 1 \geq r$, and $\epsilon_{s_{1}} = 0$ otherwise. 
Next, note that if $s_{2} \rightarrow s_{2} + 1$ in $Q^{0}$ (i.e., $b^{0}_{s_{2} + 1, s_{2}} = 1 = -b^{0}_{s_{2}, s_{2} + 1}$), then $a_{s_{2}} = 1$ implies  $a_{s_{2} + 1} = 2$ by (2) of Lemma \ref{lemma:d-a}.  On the other hand, if $s_{2} + 1 \rightarrow s_{2}$ in $Q^{0}$ (i.e., $b^{0}_{s_{2} + 1, s_{2}} = -1 = -b^{0}_{s_{2}, s_{2} + 1}$), then $a_{s_{2}} = 1$ implies $a_{s_{2} + 1} = 0$ by the same property.  In either case, we have $a_{s_{2} + 1}b^{0}_{s_{2} + 1, s_{2}}  = 2[-b^{0}_{s_{2}, s_{2} + 1}]_{+}$.  Likewise, it is easy to show that $a_{s_{1} - 1}b_{s_{1} - 1, s_{1}}^{0} = 2[-b^{0}_{s_{1}, s_{1} - 1}]_{+}$ if $s_{1} - 1 \geq r$ and $a_{s_{1} - 1} = 0$ otherwise.  Thus, 
\begin{eqnarray}
\langle \aa, \aa' \rangle & = & \frac{d}{2}(a_{s_{1} - 1}a_{s_{1}}b_{s_{1} - 1, s_{1}}^{0}\epsilon_{s_{1}} + a_{s_{2}}a_{s_{2} + 1}b_{s_{2}, s_{2} + 1}^{0}) \\
& =  & d[-b_{s_{1}, s_{1} - 1}^{0}]_{+}\epsilon_{s_{1}} + d[-b^{0}_{s_{2}, s_{2} + 1}]_{+}.  
\end{eqnarray}
This proves that the right hand side of (\ref{eqn:phi-tilde}) is equal to $d$ in this case.

Next, we consider the case where $s_{2} = n - 2$.  By reasoning as in the previous case, one may show that 
\begin{eqnarray}
(-\gg_{\dd''} - \gg_{\dd'''})\cdot \aa' = 2 - 2[-b^{0}_{s_{1}, s_{1} - 1}]_{+}\epsilon_{s_{1}} - [-b^{0}_{n - 2, n}]_{+} - [-b^{0}_{n - 2, n - 1}]_{+}.
\end{eqnarray}
Note that the arrows between $n - 2$ and $n$ and between $n - 2$ and $n - 1$ in $Q^{0}$ are not critical with respect to $(\dd, \aa)$ since $[s_{1}, n - 2] \in \mathcal{C}(S)$. Using the fact that $a_{n - 2} = 1$, one may show that 
\begin{eqnarray}
a_{n}b_{n, n - 2}^{0} & = & [-b^{0}_{n - 2, n}]_{+} \\
a_{n - 1}b_{n, n - 1}^{0} & = & [-b^{0}_{n - 2, n - 1}]_{+}.
\end{eqnarray}
Therefore,
\begin{eqnarray}
\langle \aa, \aa' \rangle & = &  \frac{d}{2}(a_{s_{1} - 1}a_{s_{1}}b_{s_{1} - 1, s_{1}}^{0}\epsilon_{s_{1}} + a_{n - 2}a_{n}b_{n, n - 2}^{0} + a_{n - 1}a_{n}b_{n, n - 1}^{0}) \\
& = & d[-b^{0}_{s_{1}, s_{1} - 1}]_{+}\epsilon_{s_{1}} + \frac{d}{2} [-b^{0}_{n - 2, n}]_{+} + \frac{d}{2}[-b^{0}_{n - 2, n - 1}]_{+}.
\end{eqnarray}
This proves the desired assertion.

\end{proof}

Now we are ready to complete the proof of the theorem for type $\mbox{D}_{n}$.  Assume that  $\vv^{(0)}, \ww^{(0)}$ are as at (\ref{eqn:vv-ww1}).  (The proofs in the other cases are similar.)  By Lemma \ref{lemma:S-a}, the coefficient of $Z^{\aa}$ in the expansion of $F_{\dd''}F'_{\dd'''}$ is 
\begin{eqnarray} \label{eqn:coeff-za}
\sum_{(\vv, \ww) \in S_{\aa}} q^{\tilde{\psi}(\vv, \ww)} = \sum q^{\tilde{\psi}(\vv^{(0)} - \aa', \ww^{(0)} + \aa')},
\end{eqnarray}
where the summation on the right hand side ranges over $\aa' = \sum_{C \in J} \ee_{C}$ such that $J \subset \mathcal{C}(S)$.     Using Lemma \ref{lemma:psiD} and induction on the cardinality of $J$, it is easy to show that
\begin{eqnarray}
\tilde{\psi}(\vv^{(0)} - \aa', \ww^{(0)} + \aa') = \tilde{\psi}(\vv^{(0)}, \ww^{(0)}) + d|J|.
\end{eqnarray}
Using the binomial theorem, the right hand side of (\ref{eqn:coeff-za}) may be rewritten as 
\begin{eqnarray}
\sum_{i = 0}^{|\mathcal{C}(S)|} (q^{\tilde{\psi}(\vv^{(0)}, \ww^{(0)}) + di})\left(\begin{array}{c} |\mathcal{C}(S)| \\ i \end{array}\right) = q^{\tilde{\psi}(\vv^{(0)}, \ww^{(0)})}(1 + q^{d})^{|\mathcal{C}(S)|}.
\end{eqnarray}
This proves that the coefficient of $Z^{\aa}$ in $F_{\dd''}F'_{\dd'''}$ has the form at (\ref{coeff-formD}), as desired.

\subsection{Type $\mbox{B}_{n}$}

Let $B^{0} = (b^{0}_{ij})$ be an acyclic $n \times n$ exchange matrix of type $\mbox{B}_{n}$.


 First, we will prove the theorem for $\dd$ of the form $\ee_{p} + \ldots + \ee_{r}$, where $1 \leq p \leq r \leq n$.   It suffices to show that for $\aa = (a_{1}, \ldots, a_{n}) \in \ZZ^{n}$, $Z^{\aa}$ occurs with nonzero coefficient in $F_{\dd}$ if and only if $u_{1}^{a_{1}}\ldots u_{n}^{a_{n}}$ occurs with nonzero coefficient in $F_{\dd}^{cl}$, and in this case, the coefficient of $Z^{\aa}$ in $F_{\dd}$ is of the form $q^{c}$ for some $c \in \frac{1}{2}\ZZ$.  Then Theorem \ref{thm:quantum-classical}  follows from Proposition \ref{prop:fpoly-coeff}.

\begin{lemma} \label{lemma:typeC-rec1} Let $\dd = \ee_{p} + \ldots + \ee_{r} \in \Phi_{+}(B^{0})$.   Then 
\begin{eqnarray}  \label{eqn:typeC-rec1}
F_{\dd} = q^{\lambda'}F_{\dd'}Z^{\aa'} + q^{\lambda''}F_{\dd''}Z^{\aa''}
\end{eqnarray}
for some $\lambda', \lambda'' \in \frac{1}{2}\ZZ$,  $\aa', \aa'' \in \ZZ_{\geq 0}^{n}$,  and $\dd', \dd'' \in \Phi_{+}(B^{0}) \cup \{ 0 \}$ such that $\dd', \dd'' < \dd$ and the $r$th component of $\dd'$ and $\dd''$ are both 0.
\end{lemma}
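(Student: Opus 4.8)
The plan is to realize the cluster variable with denominator vector $\dd = \ee_p + \cdots + \ee_r$ via an explicit sequence of cluster mutations, using the combinatorial model of diagonals in the regular $(2n+2)$-gon recalled in the previous subsection (with $r=1$ for type $\mbox{B}_n$), and then invoke Theorem \ref{thm:quantum-fpoly-rec} to extract the recursion. First I would start from the initial maximal diagonal set $\mathcal{D}^0 = \{\alpha_1, \ldots, \alpha_n, \theta(\alpha_1), \ldots, \theta(\alpha_{n-1})\}$. The diagonals $\alpha_p, \ldots, \alpha_r$ cut out a chain of triangles $\Delta_{p-1}, \ldots, \Delta_{r-1}$ on half of $\PP_{2n+2}$; one checks that the cluster variable with denominator vector $\dd$ corresponds to the diagonal $\beta$ joining the ``far'' vertex $v$ opposite $\alpha_p$ in $\Delta_{p-1}$ to the ``far'' vertex opposite $\alpha_r$ in $\Delta_{r-1}$. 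To reach $\beta$, mutate in directions $k = r, r-1, \ldots, p$ in that order (possibly preceded by a mutation handling any diameter, as in the type $\mbox{D}_n$ argument, though for $\dd = \ee_p + \cdots + \ee_r$ with $r \le n$ the last mutation $\mu_p$ is the one that produces $\beta$). I would track that after mutating in $r, \ldots, p+1$ one is left with $\alpha_p$ sitting as the diagonal of a quadrilateral whose four sides lie in the current diagonal set $\mathcal{D}'$ (or are sides of $\PP_{2n+2}$), and the final mutation $\mu_p$ replaces $\alpha_p, \theta(\alpha_p)$ by $\beta, \theta(\beta)$.

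Next I would verify the hypotheses of Theorem \ref{thm:quantum-fpoly-rec}: the sequence $k_1, \ldots, k_\ell$ consists of the mutation directions used, with $k_\ell = p$, and the crucial condition is that $p$ does not appear earlier in the sequence, i.e.\ $p \notin \{r, r-1, \ldots, p+1\}$, which is clear. Then the theorem gives $F_{\dd} = F_{p; t'}$ as a sum of two terms $q^{\lambda^+}(\prod_j \hat F_j^{(+)})Z^{\aa^+} + q^{\lambda^-}(\prod_j \hat F_j^{(-)})Z^{\aa^-}$, where each $\hat F_j^{(\epsilon)}$ is built from quantum $F$-polynomials $F_{k_j^\epsilon; t}$ with $k_j^\epsilon$ among the earlier mutation directions. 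Using the exchange-matrix formulas \eqref{eqn:bc1}--\eqref{eqn:bc3} for the combinatorial model, I would check that in the seed at $t$ the only earlier directions $j$ with $b_{jp} \ne 0$ are those whose diagonals form two of the four sides of the quadrilateral around $\alpha_p$, and each such $b_{jp} \in \{+1,-1\}$ (the diameter case, which would give $b_{jp} = \pm r$ or $\pm 2/r$, does not occur here because $\beta$ is not a diameter when $r < n$; when $r = n$ one argues as in the $\mbox{D}_n$ case but still ends with a product of at most one $F$-polynomial per sign). Since each $[\epsilon b_{k_j^\epsilon;k}]_+ \le 1$, each $G_j^{(\epsilon)}$ and hence each $\hat F_j^{(\epsilon)}$ is a single $L[\cdot]$-twist of one quantum $F$-polynomial $F_{\dd'}$ or $F_{\dd''}$; and an $L$-twist only multiplies monomials by powers of $q$, so it can be absorbed into a single $q^{\lambda}Z^{\aa}$-type redefinition, giving exactly \eqref{eqn:typeC-rec1}. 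Finally, the diagonals corresponding to $\dd'$ and $\dd''$ are the two relevant sides of the quadrilateral, which only cross diagonals among $\alpha_{p+1}, \ldots$ (never $\alpha_p$), so $\dd', \dd'' < \dd$ and their $r$th components are $0$ — indeed their $p$th components are $0$, and more is true by inspecting which $\alpha_i$ they meet.

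The main obstacle I anticipate is the bookkeeping in the combinatorial model: carefully identifying, after the partial sequence of mutations $\mu_r, \ldots, \mu_{p+1}$, exactly which diagonals bound the quadrilateral containing $\alpha_p$, and confirming that those boundary diagonals correspond to denominator vectors strictly below $\dd$ with vanishing $r$th (indeed $p$th) component. A related subtlety is the boundary/diameter case $r = n$, where $\alpha_n$ (and possibly a preliminary mutation) must be handled with the same care as in Lemma \ref{typeDrec}; but since $\dd = \ee_p + \cdots + \ee_n$ has all components $0$ or $1$, this case is in fact already covered by \cite{quantumfpoly}, so strictly speaking the lemma need only be proved for $r < n$, which removes the diameter complication entirely and leaves a clean two-term recursion coming directly from a single quadrilateral flip.
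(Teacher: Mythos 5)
Your overall strategy (realize the variable $\dd=\ee_p+\cdots+\ee_r$ in the polygon model and read off the recursion from Theorem \ref{thm:quantum-fpoly-rec} at the last flip) is the paper's strategy, but your choice of mutation order breaks the statement you are supposed to prove. Mutating in the decreasing order $r, r-1,\ldots,p$ makes the last exchange happen in direction $p$, and the unique previously mutated direction adjacent to $p$ in the final quadrilateral is $p+1$, whose cluster variable has denominator vector $\ee_{p+1}+\cdots+\ee_r$. So your recursion has $\dd'=\ee_{p+1}+\cdots+\ee_r$, $\dd''=0$: the \emph{$p$th} components vanish, but the $r$th component of $\dd'$ equals $1$ whenever $p<r$. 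Your sentence ``their $r$th components are $0$ --- indeed their $p$th components are $0$'' conflates the two; only the second claim is true for your construction, and the lemma (and its use right after, where the two terms are separated according to whether a monomial contains $Z_r$) requires the first. The paper avoids this by mutating in the increasing order $p,p+1,\ldots,r$, so the last mutation is in direction $r$, the intermediate variables are $\ee_p+\cdots+\ee_k$, and the only nontrivial $F$-polynomial entering Theorem \ref{thm:quantum-fpoly-rec} is $F_{\ee_p+\cdots+\ee_{r-1}}$, whose $r$th component is $0$. (Your variant could be salvaged by rewriting the subsequent induction to split on $Z_p$ instead of $Z_r$, but that is a different lemma from the one stated.)

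The $r=n$ case is a second genuine problem. You set it aside by asserting it is already covered by \cite{quantumfpoly}, but the paper claims this only in the skew-symmetric settings (type $\mbox{A}_n$, and $0/1$ vectors in type $\mbox{D}_n$); for type $\mbox{B}_n$ the formula for $\dd=\ee_p+\cdots+\ee_r$, including $r=n$, is exactly what this lemma plus induction is establishing, so you cannot drop it. Moreover, with your decreasing order the case $r=n$ does not even give the form \eqref{eqn:typeC-rec1}: after mutating $n,n-1,\ldots,p+1$ the neighbor at position $p+1$ is a diameter, so by \eqref{eqn:bc1} (with the realization constant equal to $1$ in type $\mbox{B}_n$) the relevant exchange entry is $\pm 2$, and Theorem \ref{thm:quantum-fpoly-rec} then produces a product of two twisted copies of $F_{\dd'}$ (a recursion of the shape \eqref{eqn:fpolyC-rec2}), not a single $F$-factor per term. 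With the paper's increasing order it is the \emph{new} variable at direction $r=n$ that is the diameter, so \eqref{eqn:bc2} gives entries $\pm 1$ and the clean two-term recursion survives. (A small additional slip: the target diagonal should end at the vertex opposite $\alpha_r$ in $\Delta_r$, i.e.\ $v_r$, not the vertex opposite $\alpha_r$ in $\Delta_{r-1}$; the diagonal you describe misses $\alpha_{r-1}$ and $\alpha_r$.)
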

\begin{proof} A similar strategy as in the proof of Lemma \ref{typeDrec} will be used.  Note that the diagonals $\alpha_{1}, \ldots, \alpha_{n}$ divide half of $\PP_{2n + 2}$ into $n$ triangles.  For $i = 1, \ldots, n - 1$, let $\Delta_{i}$ be the triangle containing $\alpha_{i}$ and $\alpha_{i + 1}$ as sides, and let $\Delta_{0}$ be the triangle with $\alpha_{1}$ as a side but not $\alpha_{2}. $   For $i \in [1, n - 1]$, let $v_{i}$ be the vertex opposite $\alpha_{i}$ in $\Delta_{i}$.   Let $v_{n}$ be the endpoint of $\theta(\alpha_{n - 1})$ which is not on the diagonal $\alpha_{n}$.  Let $w_{i}$ be the vertex opposite $\alpha_{i}$ in the triangle $\Delta_{i - 1}$ for $i \in [1, n]$.   

Mutate the initial cluster in directions $k = p, \ldots, r$, and consider the corresponding sequence of mutations of $\mathcal{D}^{0}$.  When the mutation in direction $k \in [p, r]$ occurs, the new diagonal obtained is $[w_{p}v_{k}]$ if $k < n$ and $[w_{p}\overline{w_{p}}]$ if $k = n$.   Note that the elements of $\mathcal{D}^{0}$ that the diagonal $[w_{p}v_{k}]$ $(k < n)$ intersects are precisely $\alpha_{p}, \ldots, \alpha_{k}$, and the elements that the diagonal $[w_{p}\overline{w_{p}}]$ intersects are precisely $\alpha_{p}, \ldots, \alpha_{n}, \theta(\alpha_{n - 1}), \ldots, \theta(\alpha_{p})$.  In either case, the $k$th cluster variable in the final cluster has denominator vector $\ee_{p} + \cdots + \ee_{k}$.  Observe that $[w_{p}v_{k}]$ is not a diameter unless $k = r = n$.  By using (\ref{eqn:bc2}) or (\ref{eqn:bc3}) in conjunction with Theorem \ref{thm:quantum-fpoly-rec}, it is easy to show that an equation of the form (\ref{eqn:typeC-rec1}) holds.
\end{proof}

Proceed by induction on the difference $r - p$.  It is easy to show that $F_{\ee_{i}} = q^{d}Z_{i} + 1$ for $i \in [1, n - 1]$ and $F_{\ee_{n}} = q^{d/2}Z_{n} + 1$ using Theorem \ref{thm:quantum-fpoly-rec}.    Now suppose that $r - p > 1$, and assume the theorem has been proven for all $\ee_{p'} + \ldots + \ee_{r'} \in \Phi_{+}(B^{0})$ such that $r' - p' < r - p$.   Let $\dd = \ee_{p} + \cdots + \ee_{r}$.   By reasoning as in the type $\mbox{D}_{n}$ case, one may show that for $\aa = (a_{1}, \ldots, a_{n}) \in \ZZ^{n}_{\geq 0}$, $Z^{\aa}$ occurs with nonzero coefficient in $F_{\dd}$ if and only if $u_{1}^{a_{1}}\ldots u_{n}^{a_{n}}$ occurs with nonzero coefficient in $F_{\dd}^{cl}$.  Furthermore, one of the expressions  $q^{\lambda'}F_{\dd'}Z^{\aa'}, q^{\lambda''}F_{\dd''}Z^{\aa''}$ in Lemma \ref{lemma:typeC-rec1} contains all of the terms in $F_{\dd}$ with $Z_{r}$ in it, and the other expression contains all of the terms in $F_{\dd}$ without $Z_{r}$.    It is known that the coefficient of every monomial in $F_{\dd'}$ and $F_{\dd''}$ is a power of $q$, so the same statement  is true of $F_{\dd}$.  This finishes the proof of the theorem in the case where $\dd = \ee_{p} + \cdots + \ee_{r}$.

For the remainder of the proof of the theorem, we turn our attention to denominator vectors of the form $\dd = \sum_{i = p}^{n} \ee_{i} + \sum_{j = r}^{n} \ee_{j} \in \Phi_{+}(B^{0})$, where $p < r$.  The base of the induction occurs when $n - p = 0$, in which case, $\dd = \ee_{n}$, and the theorem has already been proven.

Now let $1 \leq p < r \leq n$, and assume that Theorem \ref{thm:quantum-classical} has been proven for $\dd = \sum_{i = p'}^{n} \ee_{i} + \sum_{j = r'}^{n} \ee_{j}$ for $1 \leq p' < r' \leq n$ such that $n - p' < n - p$.  It suffices to show that for $\aa = (a_{1}, \ldots, a_{n}) \in \ZZ^{n}$, $Z^{\aa}$ occurs with nonzero coefficient in $F_{\dd}$ if and only if $u_{1}^{a_{1}}\ldots u_{n}^{a_{n}}$ occurs with nonzero coefficient in $F_{\dd}^{cl}$, and in this case, the coefficient of $Z^{\aa}$ in $F_{\dd}$ is of the form 
\begin{eqnarray} \label{typeCcoeff}
q^{c_{1}}(1 + q^{d})^{\rho_{\dd}(\aa)}(1 + q^{2d})^{c_{2}}.
\end{eqnarray}
Once this is done, it is easy to finish the proof of the theorem as follows.  By Proposition \ref{prop:fpoly1}, setting $q = 1$ and $Z_{i} = u_{i}$ for $i \in [1, n]$ in $F_{\dd}$ yields $F_{\dd}^{cl}$.  This forces $c_{2} = \phi_{\dd}(\aa) - \rho_{\dd}(\aa)$.  Note that the expression at (\ref{typeCcoeff}) can be rewritten as 
\begin{eqnarray}
q^{c_{1}'}(q^{-\frac{d}{2}} + q^{\frac{d}{2}})^{\rho_{\dd}(\aa)}(q^{-d} + q^{d})^{\phi_{\dd}(\aa) - \rho_{\dd}(\aa)}
\end{eqnarray}
for some $c_{1}' \in \frac{1}{2}\ZZ$.  By applying Proposition \ref{prop:fpoly-coeff}, it follows that $c_{1}' = -\frac{d}{2}\delta \cdot \gg_{\dd} \cdot \aa$, as desired.

\begin{lemma} \label{typeCrec} Let $\dd = \ee_{p} + \cdots + 2\ee_{r} + \cdots + 2\ee_{n} \in \Phi_{+}(B^{0})$.  At least one of the two statements below  is true:
\begin{enumerate}
\item For some $\lambda', \lambda'' \in \frac{1}{2}\ZZ$, $\aa', \aa'' \in \ZZ^{n}$, and $\dd', \dd'' \in \Phi_{+}(B^{0}) \cup \{ 0 \}$ such that $\dd', \dd'' < \dd$ and the $p$th component of both vectors is 0, the quantum $F$-polynomial $F_{\dd}$ satisfies

\begin{eqnarray} \label{eqn:fpolyC-rec1}
F_{\dd} = q^{\lambda'}F_{\dd'}Z^{\aa'} + q^{\lambda''}F_{\dd''}Z^{\aa''}
\end{eqnarray}

\item For some  $\aa', \aa'' \in \ZZ^{n}$, $\lambda', \lambda'' \in \frac{1}{2}\ZZ$, and $\dd' \in \Phi_{+}(B^{0}) \cup \{ 0 \}$ such that $\dd' < \dd$ and the $p$th component of $\dd'$ is 0, we have
\begin{eqnarray} \label{eqn:fpolyC-rec2}
F_{\dd} = q^{\lambda'}F_{\dd'}Z^{\aa'} + q^{\lambda''}F_{\dd''}F'_{\dd''}Z^{\aa''}
\end{eqnarray} 
where 
\begin{eqnarray} \label{eqn:fpoly-rec2}
\dd'' & =  &\ee_{r} + \cdots + \ee_{n},  \\
F'_{\dd''} & = & L[\gg_{\dd''}](F_{\dd''}).
\end{eqnarray}
\end{enumerate}
\end{lemma}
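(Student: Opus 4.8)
The plan is to run the proof of Lemma~\ref{typeDrec} almost verbatim, working now inside the regular $(2n+2)$-gon $\PP_{2n + 2}$, which models type $\mbox{B}_{n}$. The aim is to exhibit a sequence of cluster mutations that produces the cluster variable with denominator vector $\dd = \ee_{p} + \cdots + 2\ee_{r} + \cdots + 2\ee_{n}$ and whose last step is a mutation in direction $p$, so that Theorem~\ref{thm:quantum-fpoly-rec} applies --- its hypothesis $k_{\ell} \notin \{k_{1}, \ldots, k_{\ell - 1}\}$ holds because $p < r$ forces every earlier mutation in the sequence to be in a direction strictly larger than $p$.

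Concretely, I would use the triangles $\Delta_{0}, \ldots, \Delta_{n - 1}$ cut out of half of $\PP_{2n + 2}$ by $\alpha_{1}, \ldots, \alpha_{n}$ (with $\alpha_{n}$ the unique diameter), together with the vertex $v$ opposite $\alpha_{r}$ in $\Delta_{r - 1}$, the vertices $v_{i}$ opposite $\alpha_{i}$ in $\Delta_{i}$, and the vertices $w_{i}$ opposite $\alpha_{i}$ in $\Delta_{i - 1}$. Mutating $\mathcal{D}^{0}$ in directions $r, r + 1, \ldots, n$ turns $\alpha_{i}$ ($i \in [r, n - 1]$) into $[vv_{i}]$ and the diameter $\alpha_{n}$ into the diameter $[v\overline{v}]$; next one mutates (in decreasing order) in those directions $j \in [r, n - 1]$ for which the arrow between $j$ and $j + 1$ in $Q^{0}$ is oriented oppositely to the arrow between $r - 1$ and $r$, replacing the corresponding diagonals by diagonals through $\overline{v}$, exactly as in the proof of Lemma~\ref{typeDrec}; finally one mutates in directions $r - 1, r - 2, \ldots, p$, turning $\alpha_{k}$ into $[\overline{v}w_{k}]$. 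The diagonal $\alpha'_{p} = [\overline{v}w_{p}]$ obtained at the last step crosses exactly $\alpha_{p}, \ldots, \alpha_{n}, \theta(\alpha_{r}), \ldots, \theta(\alpha_{n - 1})$, hence represents the cluster variable with denominator vector $\dd$.

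With the sequence fixed, the last mutation (direction $p$) turns the still-unmutated $\alpha_{p} = [a_{1}a_{2}]$ into $\alpha'_{p}$, and $\alpha'_{p}$ is a diagonal of the quadrilateral whose sides are $\beta_{1} = [\overline{v}a_{1}]$, $\beta_{2} = [\overline{v}a_{2}]$, $\alpha_{p - 1}$ (when $p > 1$), and a side of $\PP_{2n + 2}$ --- with $\beta_{1}$ or $\beta_{2}$ being the diameter $[v\overline{v}]$ precisely when $a_{1}$ or $a_{2}$ equals $v$. Each of $\beta_{1}, \beta_{2}$ meets only diagonals from $\{\alpha_{p + 1}, \ldots, \alpha_{n}, \theta(\alpha_{r}), \ldots, \theta(\alpha_{n - 1})\}$, so it represents some $\dd' \in \Phi_{+}(B^{0}) \cup \{0\}$ with $\dd' < \dd$ and $p$th coordinate $0$, while $\alpha_{p - 1}$ and the polygon side were never mutated and therefore do not enter the products of Theorem~\ref{thm:quantum-fpoly-rec}. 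If neither $a_{1}$ nor $a_{2}$ equals $v$, then by \eqref{eqn:bc3} the two nonzero entries in column $p$ of the exchange matrix at the penultimate seed are $\pm 1$ with opposite signs, so Theorem~\ref{thm:quantum-fpoly-rec} collapses to $F_{\dd} = q^{\lambda'}F_{\dd'}Z^{\aa'} + q^{\lambda''}F_{\dd''}Z^{\aa''}$, which is \eqref{eqn:fpolyC-rec1}. If instead, say, $a_{1} = v$, then $\beta_{1} = [v\overline{v}]$ is a diameter, so by \eqref{eqn:bc1} its entry in column $p$ is $\pm 2$; the cluster variable represented by $\beta_{1}$ has denominator vector $\dd'' = \ee_{r} + \cdots + \ee_{n}$, and the factor it contributes in Theorem~\ref{thm:quantum-fpoly-rec} is $L[0](F_{\dd''}) \cdot L[\gg_{\dd''}](F_{\dd''}) = F_{\dd''}F'_{\dd''}$ (with the relevant $\gg$-vector identified with $\gg_{\dd''}$ via Theorem~\ref{thm:g-vec-classical}), yielding \eqref{eqn:fpolyC-rec2}.

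The main obstacle is the purely combinatorial verification that the stated mutation sequence is legitimate at every step --- i.e.\ each listed direction really corresponds to a diagonal of the current maximal diagonal set --- and that endpoints and crossing numbers evolve exactly as claimed, in particular that $\alpha'_{p}$ has denominator vector $\dd$ and that the four sides of the final quadrilateral are the ones described (including the identification of $\beta_{1}$ or $\beta_{2}$ with the diameter $[v\overline{v}]$ in the boundary case). This parallels the bookkeeping already carried out for type $\mbox{D}_{n}$ in the proof of Lemma~\ref{typeDrec}, with $\PP_{2n}$ replaced by $\PP_{2n + 2}$ and colored diameters replaced by ordinary ones, so once the correspondence is set up it is routine; the degenerate cases $p = 1$ and $r = n$ would be checked separately, just as there.
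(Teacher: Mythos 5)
Your proposal is correct and follows essentially the same route as the paper: the same polygon model in $\PP_{2n+2}$, the same three-phase mutation sequence (directions $r,\ldots,n$, then the indices $j_1,\ldots,j_s$ with opposite arrow orientation, then $r-1,\ldots,p$), the same quadrilateral analysis of $\alpha'_p$, and the same case split on whether an endpoint of $\alpha_p$ coincides with the vertex $v=w_r$, combined with Theorem \ref{thm:quantum-fpoly-rec} and \eqref{eqn:bc1}/\eqref{eqn:bc3}. Your assignment of the non-diameter case to \eqref{eqn:fpolyC-rec1} and the diameter case (multiplicity $2$, giving $F_{\dd''}F'_{\dd''}$) to \eqref{eqn:fpolyC-rec2} is the intended one.
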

\begin{proof}  
Similar reasoning as in Lemma \ref{typeDrec} will be used. Use the same notation as established in the proof of Lemma \ref{lemma:typeC-rec1}.  First, mutate  in directions $k = r, \ldots, n$.  

Let $r \leq j_{s} < \ldots < j_{1} \leq n - 1$ be indices such that the arrow between $j_{\ell}$ and $j_{\ell} + 1$ in $Q^{0}$ is opposite the arrow between $r - 1$ and $r$ for $\ell = 1, \ldots, s$.  Next, we continue mutating in directions $k = j_{1}, \ldots, j_{s}$.  Let $w$ be the endpoint of the diagonal $\alpha_{r - 1}$ which is not equal to $w_{r}$.   Then $\alpha'_{j_{\ell}}$ is mutated to $\alpha''_{j_{\ell}} = [\overline{w_{r}}v_{j_{\ell + 1}}]$ (for $\ell \in [1, s - 1]$) and $\alpha_{j_{s}}'$ is mutated to $[\overline{w_{r}}w]$.  

 Continue mutating in directions $k = r - 1, \ldots, p$.  Then $\alpha_{k}$ is mutated to $\alpha_{k}' = [\overline{w_{r}}w_{k}]$ for $k = p, \ldots, r - 1$.   Note that $\alpha_{p}'$ corresponds to the cluster variable with denominator vector $\dd$ since $\alpha_{p}'$ crosses the diagonals $\alpha_{p}, \ldots, \alpha_{n}, \theta(\alpha_{r}), \ldots, \theta(\alpha_{n - 1})$, and $\theta(\alpha_{p}')$ crosses the diagonals  $\theta(\alpha_{p}), \ldots, \theta(\alpha_{n - 1}), \alpha_{n}, \alpha_{r}, \ldots, \alpha_{n - 1}$.

Let $\mathcal{D}'$ be the final collection of diagonals obtained through this process.  Let $c_{1}, c_{2}$ be the endpoints of $\alpha_{p}$.   The diagonal $\alpha'_{p}$ is the diagonal of a quadrilateral whose sides are each in $\mathcal{D}'$ or sides of $\mathbb{P}_{2n + 2}$.  To be more precise, the sides of the quadrilateral are $\beta_{1} = [\overline{w_{r}}c_{1}]$, $\beta_{2} = [\overline{w_{r}}c_{2}]$,  and $\alpha_{p - 1}$ (if $p - 1 \geq 1$); the remaining sides of the quadrilateral (if any) are sides of $\mathbb{P}_{2n + 2}$.
 
Observe that the only elements of $\mathcal{D}^{0}$ that the $\beta_{i}$ can intersect come from the list  $\alpha_{p + 1}, \ldots, \alpha_{n}, \theta(\alpha_{r}), \ldots, \theta(\alpha_{n - 1})$, and $\theta(\beta_{i})$ can only intersect diagonals in the list  $\theta(\alpha_{p + 1}), \ldots, \theta( \alpha_{n}), \alpha_{n}, \alpha_{r}, \ldots, \alpha_{n - 1}$.   Thus, each $\beta_{i}$ corresponds to some $\dd' \in \Phi_{+}(B^{0})$ such that $\dd' < \dd$, and the $p$th component of $\dd'$ is equal to 0.

Use Theorem \ref{thm:quantum-fpoly-rec} in conjunction with (\ref{eqn:bc1}) or (\ref{eqn:bc3}).  If $c_{i} \neq w_{r}$ for $i = 1, 2$, then the diagonals $\beta_{1}, \beta_{2}$ are not diameters, and we get an equation for $F_{\dd}$ of the form (\ref{eqn:fpolyC-rec2}) where the $\dd', \dd''' \in \Phi_{+}(B^{0})$  correspond to the diagonals $\beta_{1}, \beta_{2}$.  If $c_{i} = w_{r}$ for some $i$, then observe that $[w_{r}\overline{w_{r}}]$ intersects the interiors of $\alpha_{r}, \ldots, \alpha_{n}$.  In this case, (\ref{eqn:fpoly-rec2}) holds.
\end{proof}

 Using Lemma \ref{typeCrec}  and a similar argument as in the type $\mbox{D}_{n}$,  it is easy to show that  for $\aa = (a_{1}, \ldots, a_{n}) \in \ZZ^{n}$, $Z^{\aa}$ occurs with nonzero coefficient in $F_{\dd}$ if and only if $u_{1}^{a_{1}}\ldots u_{n}^{a_{n}}$ occurs with nonzero coefficient in $F_{\dd}^{cl}$.

We will consider the coefficients of $F_{\dd''}F_{\dd''}'$ for $\dd'', \dd''$ as in Lemma \ref{typeCrec}.  Let $\aa = (a_{1}, \ldots, a_{n}) \in \ZZ^{n}$ such that $Z^{\aa}$ occurs with nonzero coefficient in the expansion of $F_{\dd''}F_{\dd''}'$.   For $\aa \in \ZZ^{n}$, let $\rho(\aa) = 1$ if the $n$th component of $\aa$ is 1, and let $\rho(\aa) = 0$ otherwise.  The next goal is to show that the coefficient of $Z^{\aa}$ in $F_{\dd''}F_{\dd''}'$ is of the form 
\begin{eqnarray} \label{coeff-formC}
q^{c_{1}}(1 + q^{d})^{\rho(\aa)}(1 + q^{2d})^{c_{2}}
\end{eqnarray} 
for some $c_{1}, c_{2} \in \frac{1}{2}\ZZ$. 

Let $S$ be the subgraph of $Q^{0}$ induced by $\{ i \in [1, n] : a_{i} = 1 \}$.   Let $\mathcal{C}(S)$ denote the set of components of the graph $S$.  Let 
\begin{eqnarray} 
\label{v0} \vv^{(0)} & = & \displaystyle \sum_{i \in [1, n], \, a_{i} \geq 1} \ee_{i} \\
\label{w0} \ww^{(0)} & = & \displaystyle \sum_{i \in [1, n], \, a_{i} = 2} \ee_{i}.
\end{eqnarray}
Let $S_{\aa}$ denote the set of $(\vv, \ww) \in \ZZ^{n} \times \ZZ^{n}$ such that $\vv + \ww = \aa$, and $Z^{\vv}, Z^{\ww}$ each occur with nonzero coefficient in $F_{\dd''}$ (and thus in $F_{\dd''}'$).

We will need to use the following properties of $\aa$, which may be easily proven using the known formula for $F_{\dd''}^{cl}$.

\begin{lemma}  \label{lemma:typeC-propa}
\begin{enumerate}
 \item $0 \leq a_{i} \leq 2$ for $i \in [r, n]$, $a_{i} = 0$ otherwise.
 \item If $i \rightarrow j$ in $Q^{0}$, $i, j \in [r, n]$, then $a_{i} \leq a_{j}$.
\end{enumerate}
\end{lemma}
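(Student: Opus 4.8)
The plan is to deduce both statements directly from Theorem \ref{thm:fpoly-classical} applied to the single denominator vector $\dd'' = \ee_r + \cdots + \ee_n$, combined with the fact (already established in this subsection for denominator vectors of the form $\ee_p + \cdots + \ee_r$) that $Z^{\vv}$ occurs with nonzero coefficient in $F_{\dd''}$ if and only if $u_1^{v_1}\cdots u_n^{v_n}$ occurs with nonzero coefficient in $F_{\dd''}^{cl}$, and that in this case the coefficient of $Z^{\vv}$ in $F_{\dd''}$ is a single power of $q$. Note first that $\dd'' = \ee_r + \cdots + \ee_n$ indeed lies in $\Phi_{+}(B^{0})$: it has the form $\ee_i + \cdots + \ee_j$ with $1 \leq i \leq j \leq n$ (take $i=r$, $j=n$; here $r \leq n$ since $p < r \leq n$ in Lemma \ref{typeCrec}), and all of its components are $0$ or $1$.

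Next I would produce a pair $(\vv, \ww) \in S_{\aa}$. Since $Z^{\vv} Z^{\ww} = q^{\mu(\vv,\ww)} Z^{\vv + \ww}$ for some $\mu(\vv,\ww) \in \tfrac12\ZZ$ by the quasi-commutation relations and \eqref{eqn:Z-monomial}, the coefficient of $Z^{\aa}$ in $F_{\dd''} F'_{\dd''}$ is a sum of terms $c_{\vv} c'_{\ww} q^{\mu(\vv,\ww)}$ over $\vv + \ww = \aa$. Because every coefficient of $F_{\dd''}$ is a power of $q$ and $F'_{\dd''} = L[\gg_{\dd''}](F_{\dd''})$ merely rescales each $Z^{\ww}$ by a power of $q$, all the $c_{\vv}$, $c'_{\ww}$ are powers of $q$; hence the sum is subtraction-free and can be nonzero only if some pair $(\vv, \ww)$ with $\vv + \ww = \aa$ has both $Z^{\vv}$ in $F_{\dd''}$ and $Z^{\ww}$ in $F_{\dd''}$, i.e. $S_{\aa} \neq \varnothing$. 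Fix such a pair; by the support identification, $u^{\vv}$ and $u^{\ww}$ both occur with nonzero coefficient in $F_{\dd''}^{cl}$, so $\aa = \vv + \ww \in \ZZ^n_{\geq 0}$.

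For part (1), apply condition (1) of Theorem \ref{thm:fpoly-classical} to $(\dd'', \vv)$ and to $(\dd'', \ww)$: since $(\dd'')_i = 1$ for $i \in [r,n]$ and $(\dd'')_i = 0$ otherwise, we get $0 \leq v_i, w_i \leq 1$ for $i \in [r,n]$ and $v_i = w_i = 0$ for $i \in [1, r-1]$; adding coordinatewise gives $0 \leq a_i \leq 2$ for $i \in [r,n]$ and $a_i = 0$ otherwise. For part (2), apply condition (2) of Theorem \ref{thm:fpoly-classical} — acceptability of every arrow of $Q^0$ — to $(\dd'', \vv)$ and $(\dd'', \ww)$: for an arrow $i \to j$ with $i, j \in [r,n]$ one has $[(\dd'')_i - (\dd'')_j]_+ = [1-1]_+ = 0$, so acceptability forces $v_i - v_j \leq 0$ and $w_i - w_j \leq 0$, whence $a_i - a_j \leq 0$.

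Since the argument is just bookkeeping with Theorem \ref{thm:fpoly-classical} (conditions (3)--(5) there impose nothing, the induced subgraph $S$ in that theorem being empty when the denominator vector has all components $\leq 1$), there is essentially no obstacle; the only point I would be careful to record is the subtraction-freeness of the product $F_{\dd''} F'_{\dd''}$, which is what guarantees $S_{\aa} \neq \varnothing$ once $Z^{\aa}$ appears with nonzero coefficient.
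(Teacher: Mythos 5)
Your proof is correct and follows essentially the same route as the paper: pick $(\vv,\ww)\in S_{\aa}$ and apply conditions (1) and (2) of Theorem \ref{thm:fpoly-classical} to the pair $(\dd'',\vv)$ and $(\dd'',\ww)$, then add coordinatewise. The only difference is that you make explicit the subtraction-freeness argument guaranteeing $S_{\aa}\neq\varnothing$, which the paper leaves implicit in the surrounding discussion.
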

\begin{proof}
Let $(\vv, \ww) \in S_{\aa}$ with $\vv = (v_{1}, \ldots, v_{n}), \ww = (w_{1}, \ldots, w_{n})$.  Part (1) of Theorem  \ref{thm:fpoly-classical} implies that $0 \leq v_{i}, w_{i} \leq 1$ for $i \in [r, n]$ and $v_{i}, w_{i} = 0$ for $i \in [1, r - 1]$, so statement (1) of the lemma follows.  Part (2) of Theorem \ref{thm:fpoly-classical} implies that if $i, j \in [r, n]$ with $i \rightarrow j$ in $Q^{0}$, then $v_{i} \leq v_{j}$ and $w_{i} \leq w_{j}$.  The second part of the lemma follows immediately.
\end{proof}

The next lemma explicitly gives the elements of the set $S_{\aa}$.

\begin{lemma} \label{lemma:setS_a}
\begin{eqnarray} \label{setS_a}
S_{\aa} = \{  (\vv^{(0)} - \aa', \ww^{(0)} + \aa') : J \subset \mathcal{C}(S), \aa' = \sum_{C \in J} \ee_{C} \}.
\end{eqnarray}
\end{lemma}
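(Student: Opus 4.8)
The plan is to mirror the proof of Lemma~\ref{lemma:S-a}, exploiting the fact that in type $\mbox{B}_{n}$ the relevant denominator vector $\dd'' = \ee_{r} + \cdots + \ee_{n}$ has all entries at most $1$, which makes the combinatorics strictly simpler than in type $\mbox{D}_{n}$. First I would record a clean description of the support of $F_{\dd''}$. Since every component of $\dd''$ lies in $\{0, 1\}$, the auxiliary graph appearing in the statement of Theorem~\ref{thm:fpoly-classical} (the one built from the vertices where the denominator is $2$) is empty, so conditions (3) and (5) of that theorem are vacuous and condition (4) does not apply in type $\mbox{B}_{n}$; hence $Z^{\vv}$ (with $\vv = (v_{1}, \ldots, v_{n})$) occurs with nonzero coefficient in $F_{\dd''}$ if and only if $v_{i} = 0$ for $i \in [1, r - 1]$, $v_{i} \in \{0, 1\}$ for $i \in [r, n]$, and $v_{i} \leq v_{j}$ whenever $i \rightarrow j$ in $Q^{0}$ with $i, j \in [r, n]$. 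Because $F'_{\dd''} = L[\gg_{\dd''}](F_{\dd''})$ differs from $F_{\dd''}$ only by rescaling each monomial by a power of $q$, the two have the same support, so $S_{\aa}$ is exactly the set of pairs $(\vv, \ww)$ with $\vv + \ww = \aa$ and with both $\vv$ and $\ww$ satisfying these three conditions.

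For the inclusion $\supseteq$, I would fix $J \subseteq \mathcal{C}(S)$, set $\aa' = \sum_{C \in J} \ee_{C}$, $\vv = \vv^{(0)} - \aa'$, $\ww = \ww^{(0)} + \aa'$, and check the three conditions above for each of $\vv$ and $\ww$. The coordinate bounds are immediate from Lemma~\ref{lemma:typeC-propa} once one observes that $\aa'$ is supported on vertices where $a_{i} = 1$, at which $\vv^{(0)}$ has entry $1$ and $\ww^{(0)}$ has entry $0$ (so that after subtracting or adding $\aa'$ one still gets a $0/1$ entry), and that $\vv^{(0)}, \ww^{(0)}$ already vanish below $r$. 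For the monotonicity along an arrow $i \rightarrow j$ in $Q^{0}$ with $i, j \in [r, n]$, I would split into cases according to $(a_{i}, a_{j})$ using $a_{i} \leq a_{j}$ from Lemma~\ref{lemma:typeC-propa}(2): the cases $a_{i} = 0$, $a_{i} = 2$ (which forces $a_{j} = 2$), and $a_{i} = 1 < a_{j}$ are handled by inspection, and in the case $a_{i} = a_{j} = 1$ the vertices $i$ and $j$ lie in a common component of $S$, so $\aa'$ treats them identically and the two sides are equal.

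For the inclusion $\subseteq$, given $(\vv, \ww) \in S_{\aa}$ I would argue that $\vv$ is forced to agree with $\vv^{(0)}$ off the set $\{i : a_{i} = 1\}$: at vertices with $a_{i} = 0$ both entries vanish, and at vertices with $a_{i} = 2$ the bounds $0 \leq v_{i}, w_{i} \leq 1$ together with $v_{i} + w_{i} = 2$ force $v_{i} = w_{i} = 1 = \vv^{(0)}_{i}$. On each component of $S$ the vector $\vv$ is constant, since an arrow $i \rightarrow j$ inside a component gives $v_{i} \leq v_{j}$ and $w_{i} \leq w_{j}$ while $v_{i} + w_{i} = v_{j} + w_{j} = 1$. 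Then $J := \{ C \in \mathcal{C}(S) : \vv|_{C} \equiv 0 \}$ and $\aa' := \sum_{C \in J} \ee_{C}$ exhibit $(\vv, \ww)$ in the form $(\vv^{(0)} - \aa', \ww^{(0)} + \aa')$.

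The only step requiring real attention is the first one --- correctly determining which clauses of Theorem~\ref{thm:fpoly-classical} are in force for $\dd''$ in type $\mbox{B}_{n}$ and confirming that $L[\gg_{\dd''}]$ preserves supports; once the support of $F_{\dd''}$ is pinned down, the argument is the same routine case analysis already carried out in the type $\mbox{D}_{n}$ version of this lemma, and I do not expect any genuine obstacle. (This lemma feeds into the coefficient computation for $F_{\dd''}F'_{\dd''}$ in exactly the way Lemma~\ref{lemma:S-a} did in the preceding subsection, completing the inductive step opened by Lemma~\ref{typeCrec}.)
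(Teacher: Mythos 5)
Your proposal is correct and follows essentially the same route as the paper: pin down the support of $F_{\dd''}$ (equivalently of $F'_{\dd''}$, since $L[\gg_{\dd''}]$ only rescales monomials by powers of $q$) via parts (1) and (2) of Theorem \ref{thm:fpoly-classical}, verify the inclusion $\supseteq$ by the same case analysis on $(a_i,a_j)$ along arrows of $Q^0$, and obtain $\subseteq$ by showing $\vv$ agrees with $\vv^{(0)}$ where $a_i\in\{0,2\}$ and is constant on each component of $S$. Your treatment of the case $a_i=1<a_j$ is in fact slightly more careful than the paper's wording, but the argument is the same.
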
 
\begin{proof}  Let $(\vv, \ww) \in \ZZ^{n} \times \ZZ^{n}$, and write $\vv = (v_{1}, \ldots, v_{n})$ and $\ww = (w_{1}, \ldots, w_{n})$.  

First, suppose that $(\vv, \ww) = (\vv^{(0)} - \aa', \ww^{(0)} + \aa')$ with $\aa'$ as in the right hand side of (\ref{setS_a}).    It suffices to prove that if $i \rightarrow j$ in $Q^{0}$, $i, j \in [r,  n]$, then $v_{i} \leq v_{j}$ and $w_{i} \leq w_{j}$.   If $a_{i} = 0$, then $v_{i} = 0 \leq v_{j}$.  If $a_{i} = 2$, then $a_{i} \leq a_{j} = 2$, so $v_{j} = 1 \geq v_{i}$.  Now suppose that $a_{i} = 1$.  Then $i, j$ are in the same component in $S$, so $v_{i} = v_{j}$.    Similarly, one shows that $w_{i} \leq w_{j}$.  

Next, suppose that $(\vv, \ww) \in S_{\aa}$, and write $\vv^{(0)} = (v^{(0)}_{1}, \ldots, v^{(0)}_{n})$.   Let $\aa' = \vv^{(0)} - \vv = (a_{1}', \ldots, a_{n}')$.   It is easy to see that $v_{i} = v_{i}^{(0)}$ whenever $a_{i} = 0$ or $a_{i} = 2$, so $a_{i}' = 0$ for such indices $i$.   If $i \rightarrow j$ in $Q^{0}$ and $i, j$ are vertices in $S$, then $a_{i} = a_{j} = 1$, and  one may show that $v_{i} = v_{j} = 1$ and $w_{i} = w_{j} = 0$, or $v_{i} = v_{j} = 0$ and $w_{i} = w_{j} = 1$. Thus, if $i, j$ are vertices in $S$ connected an edge, then $a_{i}' = a_{j}'$.  Consequently, the same equality holds if $i, j$ are vertices in the same component in $S$.   Thus, $\aa'$ is a sum of elements $\ee_{C}$, where $C \in \mathcal{C}(S)$, as desired.
\end{proof}

For $(\vv, \ww) \in S_{\aa}$,  let
\begin{eqnarray}
\label{psi} \psi_{\dd''}(\vv) & = & -\frac{d}{2}(\delta \cdot \gg_{\dd''}\cdot \vv) \\
\label{psiprime} \psi'_{\dd''}(\ww) & = & - \frac{3d}{2}(\delta \cdot \gg_{\dd''}\cdot \ww). 
\end{eqnarray}
The coefficients of $Z^{\vv}$ in $F_{\dd''}$ and $Z^{\ww}$ in $F'_{\dd''}$ are given by $q^{\psi_{\dd''}(\vv)}$ and $q^{\psi'_{\dd''}(\ww)}$, respectively.

Define a skew-symmetric bilinear form $\langle \,\cdot\, , \cdot \,\rangle: \ZZ^{n} \times \ZZ^{n} \rightarrow \frac{1}{2}\ZZ$ by 
\begin{eqnarray}\langle \ee_{i}, \ee_{j} \rangle = d\cdot \sgn(b_{ij}^{0})
\end{eqnarray} 

Write
\begin{eqnarray} \label{def:tilde-psi}
\tilde{\psi}(\vv, \ww) = \psi_{\dd''}(\vv) + \psi'_{\dd''}(\ww) + \langle \vv, \ww \rangle.
\end{eqnarray}
From (\ref{eqn:Z-monomial}), it follows that  
\begin{eqnarray} \label{psi-tilde-prop}
q^{\psi_{\dd'}(\vv)}Z^{\vv} \cdot q^{\psi'_{\dd''}(\ww)}Z^{\ww} = q^{\tilde{\psi}(\vv, \ww)}Z^{\aa}.
\end{eqnarray}

\begin{lemma} \label{lemma:psi}
Let $[s_{1}, s_{2}] \in \mathcal{C}(S)$, and let $(\vv, \ww) \in S_{\aa}$ with $\vv = (v_{1}, \ldots, v_{n})$ such that $v_{i} = 1$ for all $i \in [s_{1}, s_{2}]$.  Write $
\aa' = \sum_{i \in [s_{1}, s_{2}]} \ee_{i}$.  If $s_{2} < n$, then 
\begin{eqnarray}
\tilde{\psi}(\vv - \aa', \ww + \aa') = 2d + \tilde{\psi}(\vv, \ww).
\end{eqnarray}
Otherwise, if $s_{2} = n$, then 
\begin{eqnarray}
\tilde{\psi}(\vv - \aa', \ww + \aa') = d + \tilde{\psi}(\vv, \ww).
\end{eqnarray}
\end{lemma}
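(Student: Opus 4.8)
The plan is to follow the template of the proof of Lemma \ref{lemma:psiD} in type $\mbox{D}_{n}$, but bookkeeping the long bond between $n-1$ and $n$ carefully, since that is exactly what produces the dichotomy $2d$ versus $d$. Since $\psi_{\dd''}$ and $\psi'_{\dd''}$ are linear and $\langle\,\cdot\,,\cdot\,\rangle$ is bilinear and skew-symmetric, expanding $\tilde{\psi}(\vv-\aa',\ww+\aa')-\tilde{\psi}(\vv,\ww)$ and using $\langle\aa',\aa'\rangle=0$ together with $\langle\vv,\aa'\rangle+\langle\ww,\aa'\rangle=\langle\aa,\aa'\rangle$ gives $\tilde{\psi}(\vv-\aa',\ww+\aa')-\tilde{\psi}(\vv,\ww)=-\psi_{\dd''}(\aa')+\psi'_{\dd''}(\aa')+\langle\aa,\aa'\rangle=-d(\delta\cdot\gg_{\dd''}\cdot\aa')+\langle\aa,\aa'\rangle$, using the scalars $-\tfrac d2$ and $-\tfrac{3d}{2}$ in \eqref{psi}, \eqref{psiprime}. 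So it suffices to show that this last quantity equals $2d$ if $s_{2}<n$ and $d$ if $s_{2}=n$.

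Next I would compute $\delta\cdot\gg_{\dd''}\cdot\aa'=\sum_{k=s_{1}}^{s_{2}}\delta_{k}(\gg_{\dd''})_{k}$. By Theorem \ref{thm:g-vec-classical}, $(\gg_{\dd''})_{k}=-1+\sum_{i\in[r,n]}[-b^{0}_{ki}]_{+}$ for $k\in[s_{1},s_{2}]\subseteq[r,n]$, and the only nonzero $b^{0}_{ki}$ come from edges of the chain underlying $Q^{0}$ (the $\mbox{B}_{n}$ diagram $1-2-\cdots-(n-1)$ with a double bond to $n$; since $DB^{0}$ is skew-symmetric with $D$ proportional to $\delta=(2,\dots,2,1)$ one gets $|b^{0}_{n-1,n}|=1$, $|b^{0}_{n,n-1}|=2$). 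Grouping the double sum over $k$ by edges, each edge internal to $[s_{1},s_{2}]$ — including the double bond, in case $s_{2}=n$ — contributes $\delta$-weighted total $2$ (a small check in each orientation), while $\sum_{k=s_{1}}^{s_{2}}\delta_{k}$ equals $2(s_{2}-s_{1}+1)$ if $s_{2}<n$ and $2(n-s_{1})+1$ if $s_{2}=n$. The two boundary edges — to $s_{1}-1$ when $s_{1}-1\ge r$, and to $s_{2}+1$ when $s_{2}<n$ — leave leftover terms, so $\delta\cdot\gg_{\dd''}\cdot\aa'=-2+2[-b^{0}_{s_{1},s_{1}-1}]_{+}\epsilon_{s_{1}}+2[-b^{0}_{s_{2},s_{2}+1}]_{+}$ if $s_{2}<n$ (with $\epsilon_{s_{1}}=1$ iff $s_{1}-1\ge r$), and $=-1+\delta_{s_{1}}[-b^{0}_{s_{1},s_{1}-1}]_{+}\epsilon_{s_{1}}$ if $s_{2}=n$ (no edge at the $n$-end). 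Finally I would compute $\langle\aa,\aa'\rangle$: bilinearity plus the chain structure collapse it to the same two boundary edges, the interior contributions telescoping away by skew-symmetry, leaving $\langle\aa,\aa'\rangle=a_{s_{1}-1}\,d\,\sgn(b^{0}_{s_{1}-1,s_{1}})+a_{s_{2}+1}\,d\,\sgn(b^{0}_{s_{2}+1,s_{2}})$. Since $s_{1}-1,s_{2}+1\notin S$, their $a$-values lie in $\{0,2\}$, and Lemma \ref{lemma:typeC-propa}(2) pins down which: if the arrow points into $[s_{1},s_{2}]$ the outer value is $\le a_{s_{1}}=1$, hence $0$, and if it points out the outer value is $\ge1$, hence $2$. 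A short case check then identifies these boundary terms with $2d[-b^{0}_{s_{1},s_{1}-1}]_{+}\epsilon_{s_{1}}$ and $2d[-b^{0}_{s_{2},s_{2}+1}]_{+}$, which cancel the leftover terms from the previous step, yielding $2d$ when $s_{2}<n$ and $d$ when $s_{2}=n$.

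The main obstacle I expect is precisely the bookkeeping at the double bond between $n-1$ and $n$: one has to see simultaneously that the double bond, when it sits \emph{inside} $[s_{1},s_{2}]$, still contributes a $\delta$-weighted $2$ (so it behaves like an ordinary bond in the interior sum), and that the reduced weight $\delta_{n}=1$ together with the \emph{absence} of an edge past $k=n$ is exactly what turns the answer from $2d$ into $d$. I would also treat the degenerate configurations — $[s_{1},s_{2}]=\{n\}$, $s_{1}=1$, and $r=n$ — separately, as in the $\mbox{D}_{n}$ argument, by simply omitting the boundary term that does not occur. Everything else is routine substitution, using only Theorems \ref{thm:fpoly-classical} and \ref{thm:g-vec-classical} and Lemma \ref{lemma:typeC-propa}.
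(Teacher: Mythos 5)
Your proposal is correct and follows essentially the same route as the paper: the same reduction of $\tilde{\psi}(\vv-\aa',\ww+\aa')-\tilde{\psi}(\vv,\ww)$ to $-d\,\delta\cdot\gg_{\dd''}\cdot\aa'+\langle\aa,\aa'\rangle$, the same use of Theorem \ref{thm:g-vec-classical} and Lemma \ref{lemma:typeC-propa} to evaluate the two pieces edge by edge, with only the boundary edges surviving. Your $\delta$-weighted grouping (under which the double bond contributes $2$ just like an ordinary bond, the $2d$ vs.\ $d$ dichotomy coming from $\delta_n=1$ and the missing edge past $n$) is just a repackaging of the paper's explicit computation $-2[-b^{0}_{n-1,n}]_{+}-[-b^{0}_{n,n-1}]_{+}=-2$, and your handling of the degenerate configurations (including $[s_1,s_2]=\{n\}$, where the boundary term is $d\,\delta_{s_1}[-b^{0}_{s_1,s_1-1}]_{+}$ rather than $2d[-b^{0}_{s_1,s_1-1}]_{+}$) is at least as careful as the paper's.
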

\begin{proof}
Note that 
\begin{eqnarray}  \label{eqn:phi-tilde-C}
\tilde{\psi}(\vv - \aa', \ww + \aa') - \tilde{\psi}(\vv, \ww) = -d \delta \cdot \gg_{\dd''} \cdot \aa' + \langle \aa, \aa' \rangle.
\end{eqnarray}

We show that the right hand side of this expression is equal to $2d$ if $s_{2} < n$ and equal to $d$ if $s_{2} = n$.  
Assume that $s_{1} \geq 2$.  (The proof below may be easily modified in the case that $s_{1} = 1$ by omitting any expression that contains $s_{1} - 1$ as a subscript.)  First, suppose that $s_{2} \leq n - 1$.  By Theorem \ref{thm:g-vec-classical}, 
\begin{eqnarray}
-\delta \cdot \gg_{\dd''} \cdot \aa' =  2(s_{2} - s_{1} + 1) - \sum_{\begin{array}{cc} i \in [r, n] \\ j \in [s_{1}, s_{2}] \end{array}} 2 [-b_{ji}^{0}]_{+} 
\end{eqnarray}
Since $[-b^{0}_{i, i + 1}]_{+} = 1$ or $[-b^{0}_{i + 1, i}]_{+} = 1$ (but not both) for each $i \in [s_{1}, s_{2} - 1]$, we have
\begin{eqnarray}
\sum_{i, j \in [s_{1}, s_{2}]} [-b^{0}_{ji}]_{+} = s_{2} - s_{1}.
\end{eqnarray}
Thus,
\begin{eqnarray}
-\delta \cdot \gg_{\dd''} \cdot \aa'  = 2 - 2[-b^{0}_{s_{1}, s_{1} - 1}]_{+}\epsilon_{s_{1}} - 2[-b^{0}_{s_{2}, s_{2} + 1}]_{+},
\end{eqnarray}
where $\epsilon_{s_{1}} = 1$ if $s_{1} - 1 \geq r$, and $\epsilon_{s_{1}} = 0$ otherwise. 

Note that if $s_{2} \rightarrow s_{2} + 1$ in $Q^{0}$ (i.e. $b^{0}_{s_{2}, s_{2} + 1} = -1 = -b^{0}_{s_{2} + 1, s_{2}})$, then $a_{s_{2}} = 1$ implies that $a_{s_{2} + 1} = 2$ by Lemma \ref{lemma:typeC-propa}.  If $s_{2} + 1 \rightarrow s_{2}$ in $Q^{0}$ (i.e. $b^{0}_{s_{2}, s_{2} + 1} = 1 = -b^{0}_{s_{2} + 1, s_{2}})$, then $a_{s_{2}} = 1$ implies that $a_{s_{2} + 1} = 0$, again by the same lemma.  In either case, this means that $a_{s_{2} + 1}\sgn(b^{0}_{s_{2} + 1, s_{2}}) = 2[-b^{0}_{s_{2}, s_{2} + 1}]_{+}$.  Likewise, it is easy to show that $a_{s_{1} - 1}\sgn(b^{0}_{s_{1} - 1, s_{1}}) = 2[-b^{0}_{s_{1}, s_{1} - 1}]_{+}$ when $s_{1} - 1 \geq r$. It follows that 
\begin{eqnarray}
\langle  \aa, \aa'  \rangle & = & d(a_{s_{1}}a_{s_{1} - 1}\sgn(b^{0}_{s_{1} - 1, s_{1}})\epsilon_{s_{1}} + a_{s_{2}}a_{s_{2} + 1}\sgn(b^{0}_{s_{2} + 1, s_{2}})) \\
& = & 2d[-b^{0}_{s_{1}, s_{1} - 1}]_{+}\epsilon_{s_{1}} + 2d[-b^{0}_{s_{2}, s_{2} + 1}]_{+}.
\end{eqnarray}
This proves that the right hand side of (\ref{eqn:phi-tilde-C}) is equal to $2d$.

Now suppose that $s_{2} = n$.   In this case, $-\delta \cdot \gg_{\dd''} \cdot \aa'$ equals
\begin{eqnarray}
2(n - s_{1}) + 1 - 2[-b^{0}_{n - 1, n}]_{+} - [-b^{0}_{n, n - 1}]_{+} - \sum_{\begin{array}{cc} i \in [r, n - 1] \\ j \in [s_{1}, n - 1] \end{array}} 2 [-b_{ji}^{0}]_{+} \\
= 3  -2[-b^{0}_{n - 1, n}]_{+} - [-b^{0}_{n, n - 1}]_{+} - 2[-b^{0}_{s_{1}, s_{1} - 1}]_{+}\epsilon_{s_{1}}.
\end{eqnarray}
Either $b^{0}_{n - 1, n} = -1$ and $b^{0}_{n, n - 1} = 2$, or $b^{0}_{n - 1, n} = 1$ and $b^{0}_{n, n - 1} = -2$, which means that $-2[-b^{0}_{n - 1, n}]_{+} - [-b^{0}_{n, n - 1}]_{+} = -2$.
Thus, 
\begin{eqnarray}
-\delta \cdot \gg_{\dd''} \cdot \aa' = 1 - 2[-b^{0}_{s_{1}, s_{1} - 1}]_{+}\epsilon_{s_{1}}.
\end{eqnarray}
On the other hand, 
\begin{eqnarray}
\langle \aa, \aa' \rangle = da_{s_{1}}a_{s_{1} - 1}\sgn(b^{0}_{s_{1} - 1, s_{1}})\epsilon_{s_{1}} = 2d[-b^{0}_{s_{1}, s_{1} - 1}]_{+}\epsilon_{s_{1}}.
\end{eqnarray}
Thus, in the case that $s_{2} = n$, we have that the right hand side of (\ref{eqn:phi-tilde-C}) is equal to $d$. \end{proof}

 Observe that the coefficient of $Z^{\aa}$ in $F_{\dd''}F'_{\dd''}$ is 
\begin{eqnarray} \label{eqn:coeff-za-typeC}
\sum_{(\vv, \ww) \in S_{\aa}} q^{\tilde{\psi}(\vv, \ww)} = \sum q^{\tilde{\psi}(\vv^{(0)} - \aa', \ww^{(0)} + \aa')},
\end{eqnarray}
where the summation on the right hand side ranges over $\aa' = \sum_{C \in J} \ee_{C}$ such that $J \subset \mathcal{C}(S)$.     Using Lemma \ref{lemma:psi} and induction on the cardinality of $J$, it is easy to show that
\begin{eqnarray}
\tilde{\psi}(\vv^{(0)} - \aa', \ww^{(0)} + \aa') = \tilde{\psi}(\vv^{(0)}, \ww^{(0)}) + 2d|J|  
\end{eqnarray}
if the $n$th component of $\aa'$ is 0, and 
\begin{eqnarray}
\tilde{\psi}(\vv^{(0)} - \aa', \ww^{(0)} + \aa') = \tilde{\psi}(\vv^{(0)}, \ww^{(0)}) + 2d(|J| - 1) + d
\end{eqnarray}
if the $n$th component of $\aa'$ is 1.

\

If the $n$th component of $\aa$ is 0 or 2, then the right hand side of (\ref{eqn:coeff-za-typeC}) may be rewritten as 
\begin{eqnarray}
 \sum_{i = 0}^{|\mathcal{C}(S)|} (q^{\tilde{\psi}(\vv^{(0)}, \ww^{(0)}) + 2di})\left(\begin{array}{c} |\mathcal{C}(S)| \\ i \end{array}\right) = q^{\tilde{\psi}(\vv^{(0)}, \ww^{(0)})}(1 + q^{2d})^{|\mathcal{C}(S)|}
\end{eqnarray}

If the $n$th component of $\aa$ is 1, then the summation in the right hand side of (\ref{eqn:coeff-za-typeC}) may be split into two summations, one which ranges over all $\aa'$ as above such that the $n$th component of $\aa'$ is 1, and the other over $\aa'$ where the $n$th component is 1.  Then the right hand side of (\ref{eqn:coeff-za-typeC}) equals 
\begin{eqnarray}
\sum_{i = 0}^{|\mathcal{C}(S) - 1|} q^{\tilde{\psi}(\vv^{(0)} - \aa', \ww^{(0)} + \aa')} & + & q^{d}\sum_{i = 0}^{|\mathcal{C}(S) - 1|} q^{\tilde{\psi}(\vv^{(0)} - \aa', \ww^{(0)} + \aa')}  \\
& = & (1 + q^{d}) \sum_{i = 0}^{|\mathcal{C}(S)| - 1} (q^{\tilde{\psi}(\vv^{(0)}, \ww^{(0)}) + 2di})\left(\begin{array}{c} |\mathcal{C}(S)| - 1 \\  i \end{array}\right)  \\
& = & q^{\tilde{\psi}(\vv^{(0)}, \ww^{(0)})}(1 + q^{2d})^{|\mathcal{C}(S)| - 1}(1 + q^{d}).
\end{eqnarray}
This proves that the coefficient of $Z^{\aa}$ in $F_{\dd''}F'_{\dd''}$ has the form at (\ref{coeff-formC}), as desired.

Now we complete the proof of the theorem.  Suppose that (\ref{eqn:fpolyC-rec2}) holds.    (The reasoning when (\ref{eqn:fpolyC-rec1}) holds is similar.)  By reasoning as in the type $\mbox{D}_{n}$ case, one may show that one of the expansions of  $q^{\lambda'}F_{\dd'}Z^{\aa'}, q^{\lambda''}F_{\dd''}F'_{\dd''}Z^{\aa''}$ contains all the terms in $F_{\dd}$ with $Z_{p}$, while the other contains all terms in $F_{\dd}$ without $Z_{p}$.    Thus, it suffices to show that the coefficients in the expansions of $q^{\lambda'}F_{\dd'}Z^{\aa'}, q^{\lambda''}F_{\dd''}F'_{\dd''}Z^{\aa''}$ are of the form (\ref{typeCcoeff}).

First, consider the expression $q^{\lambda''}F_{\dd''}F'_{\dd''}Z^{\aa''}$.    Note  that the $n$th component of $\aa''$ is 0.  Thus, for $\aa \in \ZZ^{n}$ such that $Z^{\aa - \aa''}$ occurs with nonzero coefficient in $F_{\dd''}F'_{\dd''}$, we have $\rho_{\dd}(\aa) = \rho(\aa - \aa'')$.  Since it is known that the coefficient of $Z^{\aa - \aa''}$ in $F_{\dd''}F'_{\dd''}$ is of the form $q^{c_{1}}(1 + q^{d})^{\rho(\aa - \aa'')}(1 + q^{2d})^{c_{2}}$, it follows that the coefficient of $Z^{\aa}$ in the expansion $q^{\lambda''}F_{\dd''}F'_{\dd''}Z^{\aa''}$ is of the form at (\ref{typeCcoeff}).

Consider the coefficients in the expression $q^{\lambda'}F_{\dd'}Z^{\aa'}$.  Suppose that is known that the coefficients of $F_{\dd'}$ are of the form given at (\ref{typeCcoeff}).    Either $\dd' = \ee_{p'} + \cdots + 2\ee_{r'} + \cdots + 2\ee_{n}$ for some $p < p' < r' \leq n$, or $\dd' = \ee_{p'} + \cdots + \ee_{r'}$ for some $p < p' \leq r' \leq n$.  In the former case, the $n$th component of $\aa'$ is equal to 0.   Thus, for all $\aa \in \ZZ^{n}$ such that $Z^{\aa - \aa'}$ occurs with nonzero coefficient in $F_{\dd'}$, it follows that $\rho_{\dd'}(\aa - \aa') = \rho_{\dd}(\aa)$.  In the latter case, all coefficients of $F_{\dd'}^{cl}$ are 0 or 1, which means that $\rho_{\dd}(\aa) = 0$, $\phi_{\dd}(\aa) = 0$ for all $\aa \in \ZZ^{n}$ such that $Z^{\aa - \aa'}$ occurs with nonzero coefficient in $F_{\dd'}$.  This proves that the coefficients of $F_{\dd}$ has the form at (\ref{typeCcoeff}), as desired.

\subsection{Type $\mbox{C}_{n}$}

Let $B^{0} = (b^{0}_{ij})$ be an acyclic $n \times n$ exchange matrix of type $\mbox{C}_{n}$.


First, we consider the case when $\dd = \ee_{p} + \cdots + \ee_{r} \in \Phi_{+}(B^{0})$ $(1 \leq p \leq r \leq n$).  Once the lemma below is proven, the proof for this type of denominator vector follows by similar reasoning as in the type $\mbox{B}_{n}$ case.

\begin{lemma} \label{lemma:typeB-rec1} Let $\dd = \ee_{p} + \ldots + \ee_{r} \in \Phi_{+}(B^{0})$.   Then 
\begin{eqnarray}  \label{eqn:typeB-rec1}
F_{\dd} = q^{\lambda'}F_{\dd'}Z^{\aa'} + q^{\lambda''}F_{\dd''}Z^{\aa''}
\end{eqnarray}
for some $\lambda', \lambda'' \in \frac{1}{2}\ZZ$, $\aa', \aa'' \in \ZZ_{\geq 0}^{n}$, and $\dd', \dd'' \in \Phi_{+}(B^{0}) \cup \{ 0 \}$ such that $\dd', \dd'' < \dd$ and the $p$th component of $\dd'$ and $\dd''$ are both 0.
\end{lemma}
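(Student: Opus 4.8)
The plan is to imitate, almost verbatim, the proof of Lemma \ref{lemma:typeC-rec1} (the type $\mbox{B}_{n}$ version of this statement), using the combinatorial model of the type $\mbox{C}_{n}$ cluster algebra as $\theta$-orbits of diagonals of the regular $(2n+2)$-gon $\PP_{2n+2}$ (so the parameter of the unified treatment is $r=2$). The only change from the type $\mbox{B}_{n}$ argument is that the mutation path is traversed in the \emph{decreasing} order $r, r-1,\dots,p$ rather than the increasing order $p,p+1,\dots,r$; this is exactly what forces the $p$th component of $\dd',\dd''$ to vanish rather than the $r$th. If $p=r$ there is nothing to prove: $\dd=\ee_{p}$ and $F_{\ee_{p}}$ has the form $q^{c}Z_{p}+1=q^{c}F_{0}Z_{p}+F_{0}$, which is (\ref{eqn:typeB-rec1}) with $\dd'=\dd''=0$. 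So assume $p<r$.

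Keep the notation of the proof of Lemma \ref{lemma:typeC-rec1}: $\alpha_{1},\dots,\alpha_{n}$ cut half of $\PP_{2n+2}$ into triangles $\Delta_{0},\dots,\Delta_{n-1}$, with $v_{i}$ (resp.\ $w_{i}$) the vertex of $\Delta_{i}$ (resp.\ $\Delta_{i-1}$) opposite $\alpha_{i}$, and $v_{n}$ the endpoint of $\theta(\alpha_{n-1})$ not on $\alpha_{n}$. Starting from $\mathcal{D}^{0}$, mutate in directions $k=r,r-1,\dots,p$ in that order. Using the description of mutations of maximal diagonal sets, one checks by downward induction on $k$ that the mutation in direction $k$ replaces the $k$th diagonal by $[v_{r}w_{k}]$ when $r<n$ (and by the appropriate diameter when $r=n$, exactly as $[w_{p}\overline{w_{p}}]$ arises in the type $\mbox{B}_{n}$ argument), that this new diagonal crosses exactly $\alpha_{k},\alpha_{k+1},\dots,\alpha_{r}$ (together with the relevant $\theta$-images, counted as in the realization) among the members of $\mathcal{D}^{0}$, and hence that the $k$th cluster variable in the resulting cluster has denominator vector $\ee_{k}+\cdots+\ee_{r}$. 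In particular, after the whole sequence, position $p$ carries the cluster variable with denominator vector $\dd$.

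Now apply Theorem \ref{thm:quantum-fpoly-rec} with $(k_{1},\dots,k_{\ell})=(r,r-1,\dots,p)$ and $k=k_{\ell}=p\notin\{k_{1},\dots,k_{\ell-1}\}$; write $t$ for the penultimate vertex and $B=(b_{ij})$ for the principal part at $t$. Since $p<n$, the diagonal $\alpha_{p}$ at $t$ is not a diameter, and --- as in the type $\mbox{B}_{n}$ case with the roles of $p$ and $r$ interchanged --- the quadrilateral having $\alpha_{p}$ as one diagonal has as its sides $\alpha_{p-1}$ (if $p\geq 2$; otherwise a polygon side), the mutated diagonals at positions $p+1$ and $p+2$, namely $[v_{r}w_{p+1}]$ and $[v_{r}w_{p+2}]$, one through each endpoint of $\alpha_{p}$ (with the ``position $p+2$'' side replaced by a polygon side, i.e.\ $\ee_{p+2}+\cdots+\ee_{r}$ read as $0$, when $r=p+1$), and one more polygon side. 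Reading off (\ref{eqn:bc3}) --- or (\ref{eqn:bc1}) if one of those two diagonals happens to be a diameter, in which case the prefactor $2/r$ equals $1$ for type $\mbox{C}_{n}$ --- the entries $b_{p+1,p},b_{p+2,p}$ are $\pm1$ with opposite signs, while $b_{jp}=0$ for every other $j\in\{k_{1},\dots,k_{\ell-1}\}=\{p+1,\dots,r\}$. Hence the index sets $I_{+},I_{-}$ of Theorem \ref{thm:quantum-fpoly-rec} are the two singletons $\{p+1\}$ and $\{p+2\}$ (in one order or the other), each product $\prod_{j}\hat{F}^{(\epsilon)}_{j}$ has a single factor, and the only $L$-operators occurring are $L[0]=\mathrm{id}$, so that factor is simply $F_{p+1;t}$ (resp.\ $F_{p+2;t}$). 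Therefore $F_{\dd}=q^{\lambda'}F_{p+1;t}Z^{\aa'}+q^{\lambda''}F_{p+2;t}Z^{\aa''}$, where $F_{p+1;t},F_{p+2;t}$ are the quantum $F$-polynomials of $\dd':=\ee_{p+1}+\cdots+\ee_{r}$ and $\dd'':=\ee_{p+2}+\cdots+\ee_{r}$ (the latter $=0$ when $r=p+1$); both lie in $\Phi_{+}(B^{0})\cup\{0\}$, are $<\dd$, and have vanishing $p$th component, which is (\ref{eqn:typeB-rec1}).

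The main obstacle is the diagonal bookkeeping of the second paragraph and the start of the third: confirming that running the mutations in the order $r,r-1,\dots,p$ really produces, step by step, the diagonals $[v_{r}w_{k}]$ with the asserted crossing pattern (hence the denominator vectors $\ee_{k}+\cdots+\ee_{r}$), and that the quadrilateral around $\alpha_{p}$ at $t$ has exactly the sides claimed with $b_{p+1,p},b_{p+2,p}$ of opposite sign. This is the mirror image of the case analysis carried out for Lemma \ref{lemma:typeC-rec1} and should go through with the same effort; the genuinely new points to double-check are the type $\mbox{C}_{n}$ exchange-matrix formulas (\ref{eqn:bc1})--(\ref{eqn:bc3}) at the final mutation --- which, however, only ever contribute values in $\{0,\pm1\}$ to $b_{p+1,p}$ and $b_{p+2,p}$ --- and the degenerate case $r=n$, where the first mutation flips the diameter $\alpha_{n}$ but the remainder of the argument is unchanged.
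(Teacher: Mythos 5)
Your strategy is the same as the paper's (mutate along the snake in the order $r,r-1,\dots,p$, check that the $k$th step produces the diagonal $[w_{k}v_{r}]$ with denominator vector $\ee_{k}+\cdots+\ee_{r}$, then invoke Theorem \ref{thm:quantum-fpoly-rec} with \eqref{eqn:bc1}--\eqref{eqn:bc3} at the last flip), but the specific structure you assert at the last flip is false, and the identity you deduce from it is false in general. You claim that the quadrilateral around $\alpha_{p}$ at the penultimate seed has among its sides the mutated diagonals at positions $p+1$ \emph{and} $p+2$, so that $I_{+},I_{-}$ are the singletons $\{p+1\},\{p+2\}$ and $\dd'=\ee_{p+1}+\cdots+\ee_{r}$, $\dd''=\ee_{p+2}+\cdots+\ee_{r}$. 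This is true only when the snake zigzags at $p+1$, i.e.\ when $w_{p+2}$ is the second endpoint of $\alpha_{p}$. When the arrows joining $p,p+1$ and $p+1,p+2$ in $Q^{0}$ point the same way, the snake pivots at the common endpoint, $w_{p+2}=v_{p}$ is \emph{not} an endpoint of $\alpha_{p}$, the diagonal $[w_{p+2}v_{r}]$ is not a side of the quadrilateral, and $b_{p+2,p}=0$ at the penultimate seed. Concretely, take type $\mbox{C}_{3}$ with the linear orientation $1\to2\to3$ (so $b^{0}_{21}=b^{0}_{32}=1$, $b^{0}_{12}=-1$, $b^{0}_{23}=-2$) and $\dd=\ee_{1}+\ee_{2}+\ee_{3}$: mutating at $3$, then $2$, one computes that the $(3,1)$ entry of the exchange matrix is $0$, so $3\notin I_{+}\cup I_{-}$; moreover no identity $F_{\dd}=q^{\lambda'}F_{\ee_{2}+\ee_{3}}Z^{\aa'}+q^{\lambda''}F_{\ee_{3}}Z^{\aa''}$ can hold for any choice of exponents, since specializing $q=1$, $Z_{i}=u_{i}=1$ gives $4$ on the left (four monomials, by Theorem \ref{thm:fpoly-classical}) but $3+2=5$ on the right. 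So the bookkeeping you defer as ``the mirror image of Lemma \ref{lemma:typeC-rec1}'' would not go through as stated.

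The repair, and all the lemma actually needs, is weaker and is what the paper uses: none of the mutated diagonals $[w_{j}v_{r}]$ ($j\in[p+1,r]$), nor their $\theta$-images, crosses $\alpha_{p}$ or $\alpha_{p-1}$, so the triangle adjacent to $\alpha_{p}$ on the $\Delta_{p-1}$ side is still $\Delta_{p-1}$, whose sides are $\alpha_{p-1}$ and/or polygon sides. Hence at most the two sides of the triangle on the other side of $\alpha_{p}$ can belong to previously mutated positions; these two sides are adjacent, so by \eqref{eqn:bc1}--\eqref{eqn:bc3} they receive opposite signs, and every nonzero entry $b_{jp}$ is $\pm1$ (in \eqref{eqn:bc1} the prefactor is $2/2=1$ in type $\mbox{C}_{n}$, and $\alpha_{p}$ is not a diameter since $p<n$ in the nontrivial case). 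Thus $|I_{+}|,|I_{-}|\le1$, each product in Theorem \ref{thm:quantum-fpoly-rec} has at most one factor with trivial $L$-operator, and \eqref{eqn:typeB-rec1} follows with $\dd',\dd''$ equal to the denominator vectors $\ee_{j}+\cdots+\ee_{r}$ of whichever mutated sides actually occur, one of them being $0$ in the ``fan'' case above; the lemma never requires identifying the indices as $p+1,p+2$. A minor further slip: when $r=n$ only the first flip produces a diameter; for $k<n$ the new diagonal $[w_{k}v_{n}]=[w_{k}\overline{w_{n}}]$ is not a diameter.
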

\begin{proof}   We use a similar strategy to Lemma \ref{typeDrec}. Use the same notation as established in the proof of  Lemma \ref{lemma:typeC-rec1}.
Mutate the initial cluster in directions $k = r, \ldots, p$, and consider the corresponding sequence of mutations applied to  $\mathcal{D}^{0}$.  When the mutation in direction $k \in [p, r]$ occurs, the new diagonal obtained is $[w_{k}v_{r}]$.  This diagonal intersects the diagonals $\alpha_{k}, \ldots, \alpha_{r}$ and no other elements in $\mathcal{D}^{0}$.  Thus, the new $k$th cluster variable obtained has denominator vector $\ee_{k} + \cdots + \ee_{r}$.    The lemma follows from (\ref{eqn:bc3}) and Theorem \ref{thm:quantum-fpoly-rec}.
\end{proof}

For the remaining $\dd \in \Phi_{+}(B^{0})$, it suffices to prove that for $\aa = (a_{1}, \ldots, a_{n}) \in \ZZ^{n}$, $Z^{\aa}$ occurs with nonzero coefficient in $F_{\dd}$ if and only if $u_{1}^{a_{1}}\ldots u_{n}^{a_{n}}$ occurs with nonzero coefficient in $F_{\dd}^{cl}$, and in this case, the coefficient of each monomial $Z^{\aa}$ is of the form
\begin{eqnarray} \label{typeBcoeff}
q^{c_{1}}(1 + q^{d})^{c_{2}}
\end{eqnarray}
for some $c_{1}, c_{2} \in \frac{1}{2}\ZZ$.  After this is done, the rest of the proof of Theorem \ref{thm:quantum-classical} is similar to the type $\mbox{D}_{n}$ case.

First, we will consider denominator vectors of the form $\dd = 2\ee_{p} + \cdots + 2\ee_{n - 1} + \ee_{n}$.

\begin{lemma}  \label{lemma:typeB1} Let $\dd = 2\ee_{p} + \cdots + 2\ee_{n - 1} + \ee_{n} \in \Phi_{+}(B^{0})$.    Then 
there exist $\lambda', \lambda'' \in \frac{1}{2}\ZZ$, $\aa', \aa'' \in \ZZ^{n}$, and $\dd', \dd''$ with each of $\dd', \dd''$  either equal to 0 or of the form $\ee_{\ell} + \cdots + \ee_{r}$ for some $p \leq \ell \leq r \leq n - 1$, such that 
\begin{eqnarray}
F_{\dd} = q^{\lambda_{1}}F_{\dd'}F_{\dd'}'Z^{\aa'} + q^{\lambda_{2}}F_{\dd''}F_{\dd''}'Z^{\aa''},
\end{eqnarray}
where 
\begin{eqnarray}
F_{\dd'} & = & L[\gg_{\dd'}](F_{\dd'}) \\
F_{\dd''} & = & L[\gg_{\dd''}](F_{\dd''}) 
\end{eqnarray}
Consequently, for $\aa = (a_{1}, \ldots, a_{n}) \in \ZZ^{n}_{\geq 0}$, $Z^{\aa}$ occurs with nonzero coefficient in $F_{\dd}$ if and only if $u_{1}^{a_{1}}\ldots u_{n}^{a_{n}}$ occurs with nonzero coefficient in $F_{\dd}^{cl}$.
\end{lemma}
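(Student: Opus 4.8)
The plan is to adapt the strategy of Lemmas~\ref{lemma:typeC-rec1} and~\ref{typeCrec}: produce an explicit sequence of cluster mutations from the initial cluster that ends at the cluster variable $x_{\dd}$, and then extract a recursion for $F_{\dd}$ from Theorem~\ref{thm:quantum-fpoly-rec}. Throughout we work in the combinatorial model of the type $\mbox{C}_{n}$ cluster algebra by diagonals of $\PP_{2n+2}$ recalled above, so that the parameter $r$ appearing in the exchange-matrix formulas \eqref{eqn:bc1}--\eqref{eqn:bc3} equals $2$. The starting observation is that $x_{\dd}$, with $\dd = 2\ee_{p} + \cdots + 2\ee_{n-1} + \ee_{n} \in \Phi_{+}(B^{0})$, is represented by a single diameter $\delta$ of $\PP_{2n+2}$ (a routine check with the crossing-count characterization of denominator vectors: the orbit must meet $\{\alpha_{i},\theta(\alpha_{i})\}$ in two points for $p\le i\le n-1$, meet the diameter $\alpha_{n}$ in one point, and be disjoint from $\alpha_{j},\theta(\alpha_{j})$ for $j<p$).

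First I would build the mutation sequence. Working with the notation from the proof of Lemma~\ref{lemma:typeC-rec1}, one mutates in a sequence of directions (in the spirit of that proof, roughly $k = p, p+1, \ldots, n$, together with finitely many ``re-routing'' mutations as in the proof of Lemma~\ref{typeCrec}, governed by which arrows of $Q^{0}$ point opposite to the arrow between $p-1$ and $p$), arranged so that the last mutation is performed in a direction $k$ whose cluster variable in the cluster reached just before that step is a diameter $\delta_{0}$; that mutation replaces $\delta_{0}$ by $\delta$. Since $\delta_{0}$ is a diameter, the quadrilateral $abcf$ surrounding it is centrally symmetric ($c=\overline{a}$, $f=\overline{b}$), so its four sides fall into only two $\theta$-orbits, $\{[ab],[cf]\}$ and $\{[bc],[af]\}$; each such orbit is either made of sides of $\PP_{2n+2}$ (a ``denominator vector $0$'') or represents a cluster variable, and tracking which $\alpha_{i}$ the two orbits cross shows that the associated denominator vectors $\dd'$ and $\dd''$ are either $0$ or of the form $\ee_{\ell}+\cdots+\ee_{r}$ with $p\le\ell\le r\le n-1$; in particular neither reaches the diameter $\alpha_{n}$, which is what forces $r\le n-1$.

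Then I would apply Theorem~\ref{thm:quantum-fpoly-rec} at the vertex $t$ immediately preceding the last mutation, with $k$ the direction of $\delta_{0}$. By \eqref{eqn:bc2} with $r=2$, the index corresponding to $\{[ab],[cf]\}$ has $b_{ik}=2$, the index corresponding to $\{[bc],[af]\}$ has $b_{ik}=-2$, and all other entries of the $k$th column vanish; hence $I_{+}$ and $I_{-}$ each contain at most one element, each product $G_{j}^{(\epsilon)}$ ranges over $i\in[1,2]$ and therefore equals $F_{\dd'}\,L[\gg_{\dd'}](F_{\dd'})=F_{\dd'}F'_{\dd'}$, respectively $F_{\dd''}F'_{\dd''}$, while the outer operator $\hat{F}_{j}^{(\epsilon)}$ reduces to $G_{j}^{(\epsilon)}$ because the index set $[1,j-1]$ is empty. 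Collecting terms (and reading $F_{0}F'_{0}=1$ in the degenerate cases) yields exactly $F_{\dd}=q^{\lambda'}F_{\dd'}F'_{\dd'}Z^{\aa'}+q^{\lambda''}F_{\dd''}F'_{\dd''}Z^{\aa''}$, which proves the first assertion. For the ``consequently'' clause, $F_{\dd'}$ and $F_{\dd''}$ (of interval type $\ee_{\ell}+\cdots+\ee_{r}$, or $1$) have all coefficients equal to single powers of $q$ by the part of this subsection already settled (via Lemma~\ref{lemma:typeB-rec1} and the induction on $r-p$); the operator $L[\cdot]$ only rescales monomials by powers of $q$, and products and sums of Laurent polynomials in the $Z_{i}$ with coefficients in $\ZZ_{\geq 0}[q^{\pm\frac{1}{2}}]$ stay of that form, so $F_{\dd}$ is subtraction-free with coefficients in $\ZZ_{\geq 0}[q^{\pm\frac{1}{2}}]$. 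Specializing $q=1$, $Z_{i}=u_{i}$ (Proposition~\ref{prop:fpoly1}) then involves no cancellation, so $Z^{\aa}$ occurs in $F_{\dd}$ if and only if $u_{1}^{a_{1}}\cdots u_{n}^{a_{n}}$ occurs in $F_{\dd}^{cl}$.

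The main obstacle is the purely combinatorial step: exhibiting the mutation sequence and checking both that its final mutation is performed at a diameter $\delta_{0}$ and that the two $\theta$-orbits bounding the surrounding quadrilateral have denominator vectors of the restricted shape $\ee_{\ell}+\cdots+\ee_{r}$ with $r\le n-1$ (or $0$). As with Lemma~\ref{typeCrec}, the picture of the diagonals depends on the orientations of the arrows of $Q^{0}$, so this requires a case analysis over those orientations and separate treatment of the boundary situations (when $p$ is near $1$ or near $n$) in which some sides of the quadrilateral degenerate into sides of $\PP_{2n+2}$ and the corresponding $\dd'$ or $\dd''$ becomes $0$.
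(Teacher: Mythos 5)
You have the right skeleton, and it is the paper's: end the mutation sequence at a mutation performed at a diameter, so that by \eqref{eqn:bc2} (with $r=2$ in type $\mbox{C}_{n}$) the relevant column of the exchange matrix has two entries $\pm 2$ and all others $0$; then in Theorem \ref{thm:quantum-fpoly-rec} the sets $I_{+},I_{-}$ are at most singletons, each $G_{j}^{(\epsilon)}$ is a product over $i\in[1,2]$, i.e.\ $F_{\dd'}\,L[\gg_{\dd'}](F_{\dd'})$, and $\hat F_{j}^{(\epsilon)}=G_{j}^{(\epsilon)}$; the ``consequently'' clause then follows from subtraction-freeness and Proposition \ref{prop:fpoly1}, exactly as you argue. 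The genuine gap is that the actual content of the lemma --- exhibiting the mutation sequence, checking that its last step is performed at a diameter, and identifying $\dd'$, $\dd''$ as intervals $\ee_{\ell}+\cdots+\ee_{r}$ with $p\le\ell\le r\le n-1$ (or $0$) --- is precisely the part you defer as ``the main obstacle'': nothing in the proposal pins down the sequence or the two neighboring $\theta$-orbits, beyond the (correct) remark that a side of the quadrilateral cannot cross $\alpha_{n}$, which forces $r\le n-1$. Your hedge that ``re-routing'' mutations as in Lemma \ref{typeCrec} may be needed is also off target: those detours are used for the vectors $\ee_{p}+\cdots+2\ee_{r}+\cdots$ with $p<r$, not for $\dd=2\ee_{p}+\cdots+2\ee_{n-1}+\ee_{n}$, and inserting them would force you to redo the identification of the quadrilateral's sides.

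The missing verification is short and is essentially all the paper's proof consists of, by reusing the computation already made in the proof of Lemma \ref{lemma:typeC-rec1}: mutate in directions $k=p,\ldots,n$, with no detours. For $k\le n-1$ the new diagonal is $[w_{p}v_{k}]$, which meets exactly $\alpha_{p},\ldots,\alpha_{k}$ among the initial diagonals, so in the type $\mbox{C}_{n}$ dictionary (count crossings of $\beta$ with $\{\alpha_{i},\theta(\alpha_{i})\}$) the $k$th variable of the final cluster has denominator vector $\ee_{p}+\cdots+\ee_{k}$. The diameter $\alpha_{n}$ is untouched until the last step (so the hypothesis $k_{\ell}\notin\{k_{1},\ldots,k_{\ell-1}\}$ of Theorem \ref{thm:quantum-fpoly-rec} holds), and mutating in direction $n$ replaces it by the diameter $[w_{p}\overline{w_{p}}]$, which meets $\alpha_{i}$ and $\theta(\alpha_{i})$ for $i\in[p,n-1]$ and meets $\alpha_{n}$ once, i.e.\ has denominator vector $\dd$. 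At that moment the quadrilateral around $\alpha_{n}$ has vertices $w_{p}$, $v_{n-2}$, $\overline{w_{p}}$, $v_{n-1}$, and its two $\theta$-orbits of sides are those of $[w_{p}v_{n-1}]$ and $[w_{p}v_{n-2}]$, i.e.\ the $(n-1)$th and $(n-2)$th variables just constructed; hence $\dd'=\ee_{p}+\cdots+\ee_{n-1}$ and $\dd''=\ee_{p}+\cdots+\ee_{n-2}$, the latter degenerating to $0$ (a boundary side) when $p=n-1$. With these identifications, your application of Theorem \ref{thm:quantum-fpoly-rec} together with \eqref{eqn:bc2}, and your positivity argument for the support statement, complete the proof.
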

\begin{proof}
Use the same notation as in Lemma \ref{lemma:typeC-rec1}.  Mutate the initial cluster in directions $k = p, \ldots, n$.   In this case,  the $k$th cluster variable in the final cluster has denominator vector $\ee_{p} + \cdots + \ee_{k}$ $(k \in [p, n - 1])$, while the $n$th cluster variable has denominator vector $2\ee_{p} + \cdots + 2\ee_{n - 1} + \ee_{n}$.   The lemma follows from Theorem \ref{thm:quantum-fpoly-rec} and (\ref{eqn:bc2}).
\end{proof}

We need to show that the coefficients of $F_{\dd'}F_{\dd'}'$ and $F_{\dd''}F_{\dd''}'$ are of the form given at (\ref{typeBcoeff}).   Let $\dd'' = \ee_{p} + \cdots + \ee_{r}$, where $p \leq  r \leq n - 1$, and let $\aa = (a_{1}, \ldots, a_{n}) \in \ZZ^{n}_{\geq 0}$ such that $Z^{\aa}$ occurs with nonzero coefficient in $F_{\dd}$.  Using the vectors $\dd''$ and  $\aa$, define $S$, $\vv^{(0)}$, $\ww^{(0)}$, $S_{\aa}$, $\mathcal{C}(S)$, $\psi_{\dd''}$, $\psi'_{\dd''}$ as in the previous subsection.   Observe that Lemmas \ref{lemma:typeC-propa} and  \ref{lemma:setS_a} hold in this setting using the same proof.

Define a skew-symmetric bilinear form $\langle \,\cdot\, , \cdot \,\rangle: \ZZ^{n} \times \ZZ^{n} \rightarrow \frac{1}{2}\ZZ$ by 
\begin{eqnarray}\langle \ee_{i}, \ee_{j} \rangle = \frac{d}{2}b_{ij}^{0}
\end{eqnarray} 
if $(i, j) \neq (n, n - 1)$, and 
\begin{eqnarray}
\langle \ee_{n}, \ee_{n - 1} \rangle = d b_{n, n - 1}^{0}.
\end{eqnarray}
Define $\tilde{\psi}$ as at (\ref{def:tilde-psi}) using this bilinear form.  Then (\ref{psi-tilde-prop}) holds in this setting as well.

\begin{lemma} \label{lemma:psiB}
Let $[s_{1}, s_{2}] \in \mathcal{C}(S)$, and let $(\vv, \ww) \in S_{\aa}$ with $\vv = (v_{1}, \ldots, v_{n})$ and $\ww = (w_{1}, \ldots, w_{n})$ such that $v_{i} = 1$ for all $i \in [s_{1}, s_{2}]$.  Write $
\aa' = \sum_{i \in [s_{1}, s_{2}]} \ee_{i}$.  Then
\begin{eqnarray}
\tilde{\psi}(\vv - \aa', \ww + \aa') = d + \tilde{\psi}(\vv, \ww).
\end{eqnarray}
\end{lemma}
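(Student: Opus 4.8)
The plan is to compute $\tilde{\psi}(\vv - \aa', \ww + \aa') - \tilde{\psi}(\vv, \ww)$ directly from the definition of $\tilde{\psi}$, exactly as in the proofs of Lemma \ref{lemma:psiD} and Lemma \ref{lemma:psi}. Expanding using bilinearity and skew-symmetry of $\langle\,\cdot\,,\cdot\,\rangle$, one finds
\begin{eqnarray*}
\tilde{\psi}(\vv - \aa', \ww + \aa') - \tilde{\psi}(\vv, \ww) = -\psi_{\dd''}(\aa') + \psi'_{\dd''}(\aa') + \langle \vv, \aa'\rangle - \langle \aa', \ww\rangle - \langle \aa', \aa'\rangle.
\end{eqnarray*}
Since $\langle\,\cdot\,,\cdot\,\rangle$ is skew-symmetric, $\langle \aa',\aa'\rangle = 0$ and $\langle \vv,\aa'\rangle - \langle \aa',\ww\rangle = \langle \vv+\ww, \aa'\rangle = \langle \aa, \aa'\rangle$. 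Using $-\psi_{\dd''}(\aa') + \psi'_{\dd''}(\aa') = -\tfrac{d}{2}(\delta\cdot\gg_{\dd''}\cdot\aa') + \tfrac{3d}{2}(\delta\cdot\gg_{\dd''}\cdot\aa') = d(\delta\cdot\gg_{\dd''}\cdot\aa')$, the right-hand side becomes $d\,(\delta\cdot\gg_{\dd''}\cdot\aa') + \langle\aa,\aa'\rangle$. (This parallels \eqref{eqn:phi-tilde-C}; note that here $\dd''$ plays the role that $\dd''$ played in type $\mbox{B}_{n}$, and the relevant component of $\dd''$ is $\ee_p + \cdots + \ee_r$ with $r \leq n-1$, so the interval $[s_1,s_2] \subseteq [p, n-1]$ always satisfies $s_2 < n$ in the sense of that subsection's Lemma \ref{lemma:psi}.)

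Next I would compute the two terms. For the $\gg$-vector term, I apply Theorem \ref{thm:g-vec-classical} to $\dd'' = \ee_p + \cdots + \ee_r$; since $\delta = (1,\ldots,1,2)$ in type $\mbox{C}_n$ and all indices involved lie in $[p, n-1]$ where $\delta_i = 1$, the weighting by $\delta$ has no effect on the sum over $[s_1,s_2]$ except possibly at the boundary index $s_2 + 1$ or $s_1 - 1$ — but a short check (using that $r \le n-1$, so $\dd''$ has support in $[p, n-1]$, hence $(\gg_{\dd''})_n$ contributes only through $\delta_n [-b^0_{n, n-1}]_+$, which vanishes because no arrow from $n-1$ to $n$ reaches outside the support) shows $\delta\cdot\gg_{\dd''}\cdot\aa'$ equals the unweighted $\gg_{\dd''}\cdot\aa'$ appearing in the type $\mbox{B}_n$ computation. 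Then, just as in Lemma \ref{lemma:psi} with $s_2 < n$, one gets $-\delta\cdot\gg_{\dd''}\cdot\aa' = 2 - 2[-b^0_{s_1, s_1-1}]_+\epsilon_{s_1} - 2[-b^0_{s_2, s_2+1}]_+$, where $\epsilon_{s_1} = 1$ iff $s_1 - 1 \geq r$ (here using the appropriate $r$ from the denominator vector $\dd = 2\ee_p + \cdots + 2\ee_{n-1} + \ee_n$). Wait — I must be careful about which lower bound of the interval range plays the role of ``$r$'' here; the support of $\dd''$ starts at $p$, so $\epsilon_{s_1} = 1$ iff $s_1 - 1 \geq p$. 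I would double-check this indexing against the construction in Lemma \ref{lemma:typeB1}, but the structure of the argument is identical.

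For the bilinear term, I use Lemma \ref{lemma:typeC-propa}(2): if $s_2 \to s_2 + 1$ in $Q^0$ then $a_{s_2} = 1$ forces $a_{s_2 + 1} = 2$, and if $s_2 + 1 \to s_2$ then $a_{s_2 + 1} = 0$, so in either case $a_{s_2 + 1}\cdot\tfrac{1}{2}b^0_{s_2 + 1, s_2} = [-b^0_{s_2, s_2+1}]_+$ (note the factor $\tfrac{1}{2}$ from the bilinear form $\langle\ee_i,\ee_j\rangle = \tfrac{d}{2}b^0_{ij}$ for $(i,j)\ne(n,n-1)$, which is the relevant case since $s_2 \le n-1$). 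An analogous identity holds at $s_1 - 1$ when $s_1 - 1 \geq p$. Summing, $\langle\aa,\aa'\rangle = \tfrac{d}{2}\bigl(a_{s_1 - 1}a_{s_1}b^0_{s_1-1,s_1}\epsilon_{s_1} + a_{s_2}a_{s_2+1}b^0_{s_2,s_2+1}\bigr) = d[-b^0_{s_1,s_1-1}]_+\epsilon_{s_1} + d[-b^0_{s_2,s_2+1}]_+$. Combining with the $\gg$-vector contribution $d\cdot(\delta\cdot\gg_{\dd''}\cdot\aa') = d\bigl(-2 + 2[-b^0_{s_1,s_1-1}]_+\epsilon_{s_1} + 2[-b^0_{s_2,s_2+1}]_+\bigr)$ — wait, I need the sign right: $d(\delta\cdot\gg_{\dd''}\cdot\aa') = -d(-\delta\cdot\gg_{\dd''}\cdot\aa') = -d\bigl(2 - 2[-b^0_{s_1,s_1-1}]_+\epsilon_{s_1} - 2[-b^0_{s_2,s_2+1}]_+\bigr)$. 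Adding this to $\langle\aa,\aa'\rangle$ gives $-2d + 2d[-b^0_{s_1,s_1-1}]_+\epsilon_{s_1} + 2d[-b^0_{s_2,s_2+1}]_+ + d[-b^0_{s_1,s_1-1}]_+\epsilon_{s_1} + d[-b^0_{s_2,s_2+1}]_+$ — this does not obviously collapse to $d$, so I must recheck the coefficients in $\psi'_{\dd''}$; in type $\mbox{B}_n$ the factor was $-\tfrac{3d}{2}$ and the bilinear form had coefficient $d\cdot\sgn(b^0_{ij})$, whereas here the form has coefficient $\tfrac{d}{2}b^0_{ij}$, so the $\psi'$ weight in type $\mbox{C}_n$ must be chosen to make everything cancel to $d$. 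I expect the correct statement is $\psi'_{\dd''}(\ww) = -\tfrac{d}{2}(\delta\cdot\gg_{\dd''}\cdot\ww) - d(\delta\cdot\gg_{\dd''}\cdot\ww)$ as in Lemma \ref{lemma:psiD}, giving $-\psi_{\dd''}(\aa') + \psi'_{\dd''}(\aa') = -d(\delta\cdot\gg_{\dd''}\cdot\aa')$, and then the arithmetic works out: $-d(\delta\cdot\gg_{\dd''}\cdot\aa') + \langle\aa,\aa'\rangle = d(2 - 2[-b^0_{s_1,s_1-1}]_+\epsilon_{s_1} - 2[-b^0_{s_2,s_2+1}]_+)\cdot(-1)\cdot(-1)$ — I will sort out the precise normalization when writing the proof. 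The main obstacle, then, is purely bookkeeping: getting the coefficient conventions for $\psi'_{\dd''}$ and $\langle\,\cdot\,,\cdot\,\rangle$ in the type $\mbox{C}_n$ setting consistent with the factor-of-$2$ structure in $\delta = (1,\ldots,1,2)$, so that the $\gg$-vector term and the bracket term combine to exactly $d$; there is no conceptual difficulty beyond reproducing the calculation of Lemma \ref{lemma:psi}/Lemma \ref{lemma:psiD} with the $\mbox{C}_n$ normalizations.
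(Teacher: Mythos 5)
Your strategy is the same as the paper's: reduce the claim to the identity $-d\,\delta \cdot \gg_{\dd''} \cdot \aa' + \langle \aa, \aa' \rangle = d$ (the analogue of \eqref{eqn:phi-tilde-C}) and evaluate the two terms using Theorem \ref{thm:g-vec-classical} and Lemma \ref{lemma:typeC-propa}. But the proposal never actually completes this evaluation, and the point where it stalls is not a harmless normalization ambiguity. First, the sign slip: with the definitions in force here (carried over from the type $\mbox{B}_{n}$ subsection, $\psi_{\dd''}(\vv) = -\frac{d}{2}\,\delta\cdot\gg_{\dd''}\cdot\vv$ and $\psi'_{\dd''}(\ww) = -\frac{3d}{2}\,\delta\cdot\gg_{\dd''}\cdot\ww$), one gets $-\psi_{\dd''}(\aa') + \psi'_{\dd''}(\aa') = \frac{d}{2}x - \frac{3d}{2}x = -d\,x$; you flipped both signs and obtained $+d\,x$, and the "corrected" definition of $\psi'$ you then conjecture is precisely the $-\frac{3d}{2}$ weight already being used, so there is nothing to renormalize --- only your own arithmetic to fix.

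The substantive gap is in the $\gg$-vector term. You import the type $\mbox{B}_{n}$ value $2 - 2[-b^{0}_{s_{1},s_{1}-1}]_{+}\epsilon_{s_{1}} - 2[-b^{0}_{s_{2},s_{2}+1}]_{+}$, but those $2$'s come from $\delta = (2,\ldots,2,1)$ in type $\mbox{B}_{n}$. Here $\dd'' = \ee_{p} + \cdots + \ee_{r}$ with $r \leq n-1$ and $\delta_{i} = 1$ on the support, so the correct value is $-\delta\cdot\gg_{\dd''}\cdot\aa' = 1 - [-b^{0}_{s_{1},s_{1}-1}]_{+}\epsilon_{s_{1}} - [-b^{0}_{s_{2},s_{2}+1}]_{+}\epsilon_{s_{2}}$, where $\epsilon_{s_{1}} = 1$ iff $s_{1}-1 \geq p$ (your guess about $p$ versus $r$ is right) and $\epsilon_{s_{2}}$ is an indicator you drop, needed because the top boundary term occurs only when $s_{2}+1 \leq r$ (in particular, when $s_{2} = n-1$ one has $r = n-1$, $a_{n} = 0$, and no top term at all, which should be noted as a separate case). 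With the correct coefficient $-d$ from the first step and the bracket value $\langle\aa,\aa'\rangle = d[-b^{0}_{s_{1},s_{1}-1}]_{+}\epsilon_{s_{1}} + d[-b^{0}_{s_{2},s_{2}+1}]_{+}\epsilon_{s_{2}}$ (which you essentially have), everything cancels to exactly $d$; with your factor-$2$ formula it does not --- this is exactly the discrepancy you observed at the end and misattributed to the normalizations of $\psi'$ and $\langle\cdot,\cdot\rangle$. Since the entire content of the lemma is that these constants combine to exactly $d$, deferring the issue with "I will sort out the precise normalization when writing the proof" leaves the lemma unproven: as written, the displayed computation is wrong and the identity is not established.
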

\begin{proof}
The proof will use similar reasoning to the proof of Lemma \ref{lemma:psi}.  It suffices to show that 
\begin{eqnarray} \label{eqn-typeB}
-d \delta \cdot \gg_{\dd''} \cdot \aa' + \langle \aa, \aa' \rangle = d.
\end{eqnarray}
Note that $-\delta \cdot \gg_{\dd''} \cdot \aa' = -\gg_{\dd''} \cdot \aa'$ since the $n$th component of $\aa'$ is 0.
First, consider the case where $s_{2} < n - 1$.  Then one can show that 
\begin{eqnarray}
-\delta \cdot \gg_{\dd''} \cdot \aa' & = & (s_{2} - s_{1} + 1) - \sum_{i \in [p, r], j \in [s_{1}, s_{2}]} [-b_{ji}^{0}]_{+} \\
& = & 1 - [-b^{0}_{s_{1}, s_{1} - 1}]_{+}\epsilon_{s_{1}} - [-b^{0}_{s_{2}, s_{2} + 1}]_{+}\epsilon_{s_{2}},
\end{eqnarray}
where $\epsilon_{s_{1}} = 0$ if $s_{1} - 1 < r$ and $\epsilon_{s_{1}} = 1$ otherwise, and $\epsilon_{s_{2}} = 0$ if $s_{2} + 1 > r$ and $\epsilon_{s_{2}} = 1$ otherwise.
Furthermore,
\begin{eqnarray}
\langle \aa, \aa' \rangle = a_{s_{1} - 1}a_{s_{1}} \epsilon_{s_{1}}\cdot \frac{d}{2}b^{0}_{s_{1} - 1, s_{1}} + a_{s_{2} + 1}a_{s_{2}} \epsilon_{s_{2}}\cdot \frac{d}{2}b^{0}_{s_{2} + 1, s_{2}}.
\end{eqnarray}
It is easy to show that $a_{s_{1} - 1}b^{0}_{s_{1} - 1, s_{1}} = 2[-b^{0}_{s_{1}, s_{1} - 1}]_{+}$ if $s_{1} - 1 \geq p$ and 
$a_{s_{2} + 1}b^{0}_{s_{2} + 1, s_{2}} = 2[-b^{0}_{s_{2}, s_{2} + 1}]_{+}$ if $s_{2} + 1 \leq r$.  Thus, (\ref{eqn-typeB}) holds in this case.

Now suppose that $s_{2} = n - 1$.  Then $r = n - 1$, and 
\begin{eqnarray}
-\delta \cdot \gg_{\dd''} \cdot \aa' & = & (n - s_{1}) - \sum_{i \in [p, n - 1], \, j \in [s_{1}, n - 1]}  [-b_{ji}^{0}]_{+} \\
& = & 1 - [-b^{0}_{s_{1}, s_{1} - 1}]_{+}\epsilon_{s_{1}}.
\end{eqnarray}
Also, 
\begin{eqnarray}
\langle \aa, \aa' \rangle = a_{s_{1} - 1}a_{s_{1}} \epsilon_{s_{1}}\cdot \frac{d}{2} b^{0}_{s_{1} - 1, s_{1}}.
\end{eqnarray}
Again, (\ref{eqn-typeB}) holds.
\end{proof}

By reasoning as in the type $\mbox{D}_{n}$ case, it is easy to show that the coefficient of $Z^{\aa}$ in $F_{\dd''}F_{\dd''}'$ has the form at (\ref{typeBcoeff}).  It then follows from Lemma \ref{lemma:typeB1} that the same is true of the coefficients of $F_{\dd}$.

Next, we consider denominator vectors of the form 
\begin{eqnarray}
\sum_{i = p}^{n} \ee_{i} + \sum_{j = r}^{n - 1} \ee_{j} = \ee_{p} + \cdots + 2\ee_{r} + \cdots + 2\ee_{n - 1} + \ee_{n} \in \Phi_{+}(B^{0}),
\end{eqnarray}
where $p < r$.   Proceed by induction on $n - p$.   The base of the induction occurs when  $n - p = 1$, in which case $\dd = \ee_{n - 1} + \ee_{n}$, and Theorem \ref{thm:quantum-classical} is already proven.  Now suppose that $\dd = \ee_{p} + \cdots + 2\ee_{r} + \cdots + 2\ee_{n - 1} + \ee_{n}$, and the theorem has been proven for denominator vectors $\ee_{p'} + \cdots + 2\ee_{r'} + \cdots + 2\ee_{n - 1} + \ee_{n}$ such that $p < p' < r' \leq n - 1$.

\begin{lemma}  Let $\dd = \ee_{p} + \cdots + 2\ee_{r} + \cdots + 2\ee_{n - 1} + \ee_{n} \in \Phi_{+}(B^{0})$.
Then there exist $\lambda_{1}, \lambda_{2} \in \frac{1}{2}\ZZ$, $\cc', \cc'' \in \ZZ_{\geq 0}^{n}$, and $\dd', \dd'' \in \Phi_{+}(B^{0}) \cup \{ 0 \}$ satisfying $\dd', \dd'' < \dd$ and the $p$th component of $\dd'$ and $\dd''$ are 0, such that
\begin{eqnarray}
F_{\dd} = q^{\lambda_{1}}F_{\dd'}Z^{\cc'} + q^{\lambda_{2}}F_{\dd''}Z^{\cc''}.
\end{eqnarray}  \label{eqn:typeCrec3}
Consequently, for $\aa = (a_{1}, \ldots, a_{n}) \in \ZZ^{n}_{\geq 0}$, $Z^{\aa}$ occurs with nonzero coefficient in $F_{\dd}$ if and only if $u_{1}^{a_{1}}\ldots u_{n}^{a_{n}}$ occurs with nonzero coefficient in $F_{\dd}^{cl}$.
\end{lemma}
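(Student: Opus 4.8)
The plan is to follow the template of the proof of Lemma \ref{typeCrec}, now working inside the regular $(2n+2)$-gon $\PP_{2n+2}$ with $r = 2$ and using the diagonal model of the previous subsection. First I would produce an explicit sequence of mutations of $\mathcal{D}^{0}$ ending in the cluster variable with denominator vector $\dd$, arranged so that Theorem \ref{thm:quantum-fpoly-rec} applies at the last step. Keeping the notation $\Delta_{i}$, $v_{i}$, $w_{i}$ from the proof of Lemma \ref{lemma:typeC-rec1}, I would mutate in directions $k = r, r+1, \ldots, n$, then in directions $k = j_{1}, \ldots, j_{s}$ where $r \le j_{s} < \cdots < j_{1} \le n-1$ are the indices at which the arrow between $j_{\ell}$ and $j_{\ell}+1$ in $Q^{0}$ is opposite the arrow between $r-1$ and $r$, and finally in directions $k = r-1, r-2, \ldots, p$. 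As in Lemma \ref{typeCrec} one tracks the diagonals step by step and checks that the terminal diagonal $\alpha'_{p}$ (obtained in direction $p$) crosses precisely $\alpha_{p}, \ldots, \alpha_{n}, \theta(\alpha_{r}), \ldots, \theta(\alpha_{n-1})$ with the appropriate multiplicities, so that it represents the cluster variable of denominator vector $\dd$, and that $p$ is a fresh direction along the path, as required by Theorem \ref{thm:quantum-fpoly-rec}.

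Next I would examine the quadrilateral having $\alpha'_{p}$ as a diagonal: its sides are $\beta_{1}$ and $\beta_{2}$ (sharing the endpoint $\overline{w_{r}}$, with the other two endpoints those of $\alpha_{p}$), the diagonal $\alpha_{p-1}$ when $p \ge 2$, and possibly sides of $\PP_{2n+2}$; each $\beta_{i}$ crosses only diagonals from $\alpha_{p+1}, \ldots, \alpha_{n}, \theta(\alpha_{r}), \ldots, \theta(\alpha_{n-1})$, hence represents some $\dd' \in \Phi_{+}(B^{0})$ with $\dd' < \dd$ and $p$th component $0$ (even if $\beta_{i}$ happens to be the diameter $[w_{r}\overline{w_{r}}]$, whose denominator vector is $\ee_{r} + \cdots + \ee_{n}$, these properties still hold). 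The decisive observation — and the reason no $L[\gg]$-twisted products appear here, in contrast to Lemma \ref{lemma:typeB1} — is that the direction $k = p$ at the vertex just before the final mutation holds the non-diameter diagonal $\alpha_{p}$, so every relevant exchange-matrix entry $b_{ip}$ is $\pm 1$ (either by \eqref{eqn:bc1} with $2/r = 1$, or by \eqref{eqn:bc3}), never $\pm 2$, and moreover $\beta_{1}$ and $\beta_{2}$ lie on opposite sides of $\alpha_{p}$, so each of $I_{+}, I_{-}$ contributes at most one index. Consequently each factor $G^{(\epsilon)}_{j}$ in Theorem \ref{thm:quantum-fpoly-rec} is a single $F$-polynomial $F_{\dd'}$ or $F_{\dd''}$, and the theorem yields exactly the two-term recursion $F_{\dd} = q^{\lambda_{1}} F_{\dd'} Z^{\cc'} + q^{\lambda_{2}} F_{\dd''} Z^{\cc''}$ of the statement, with $\dd', \dd'' < \dd$ and both having $p$th component $0$.

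For the ``consequently'' clause I would argue as in the type $\mbox{D}_{n}$ case. Specializing the recursion at $q = 1$, $Z_{i} = u_{i}$ and using Proposition \ref{prop:fpoly1} gives $F_{\dd}^{cl} = F_{\dd'}^{cl} u^{\cc'} + F_{\dd''}^{cl} u^{\cc''}$. Since $\dd'$ and $\dd''$ are strictly smaller than $\dd$ and have $p$th component $0$, they are covered by Lemma \ref{lemma:typeB-rec1}, Lemma \ref{lemma:typeB1}, or the induction hypothesis on $n - p$, so their quantum $F$-polynomials have nonnegative coefficients and involve no $Z_{p}$; hence $F_{\dd}$ is subtraction-free. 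As $F_{\dd}$ has constant term $1$ while the monomial $Z^{\dd}$ occurs (the latter because, after specialization, Theorem \ref{thm:fpoly-classical} guarantees $u^{\dd}$ occurs in $F_{\dd}^{cl}$), one of $\cc', \cc''$ has $p$th component $1$ and the other is the zero vector; thus the two summands have disjoint supports — one contributing the monomials divisible by $Z_{p}$, the other those not — and the same splitting holds for $F_{\dd}^{cl}$. Matching the two identities term by term gives the claimed equivalence of supports.

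The hard part will be the combinatorial bookkeeping of the first two steps: verifying, diagonal by diagonal in $\PP_{2n+2}$, that the prescribed mutation sequence really sweeps $\alpha_{p}$ to a diagonal with denominator vector exactly $\dd$, and identifying the four sides of the terminal quadrilateral together with the denominator vectors they represent. This is the type $\mbox{C}_{n}$ counterpart of the analysis carried out for Lemma \ref{typeCrec}, adapted to $r = 2$ and to the fact that the $n$th entry of $\dd$ is $1$ rather than $2$; once it is in place, the appeal to Theorem \ref{thm:quantum-fpoly-rec} and the specialization argument are routine.
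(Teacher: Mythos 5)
Your proposal is correct and follows essentially the same route as the paper: the mutation sequence and quadrilateral analysis of Lemma \ref{typeCrec} (in the notation of Lemma \ref{lemma:typeC-rec1}), the observation that because the final exchange is at the non-diameter $\alpha_{p}$ and $r=2$ in type $\mbox{C}_{n}$ all relevant entries are $\pm 1$ by \eqref{eqn:bc1} or \eqref{eqn:bc3} so Theorem \ref{thm:quantum-fpoly-rec} gives the untwisted two-term recursion, and the $q=1$ specialization for the support statement. One minor slip that does not affect the argument: in type $\mbox{C}_{n}$ the diameter $[w_{r}\overline{w_{r}}]$ has denominator vector $2\ee_{r}+\cdots+2\ee_{n-1}+\ee_{n}$ rather than $\ee_{r}+\cdots+\ee_{n}$ (the latter is its type $\mbox{B}_{n}$ value), but this vector still lies in $\Phi_{+}(B^{0})$, is $<\dd$, and has $p$th component $0$, so your conclusion stands.
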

\begin{proof}
The proof is similar to that of Lemma \ref{typeCrec}.  Use the same notation as defined in that proof and the same sequence of mutations, the diagonal $\alpha_{p}'$ obtained now corresponds to the denominator vector $\dd$, and the diagonals $\beta_{1}, \beta_{2}$ correspond to $\dd', \dd'' \in \Phi_{+}(B^{0})$ such that $\dd', \dd'' < \dd$ and the $p$th component of each vector is equal to 0.   To finish the lemma, use (\ref{eqn:bc1}) or (\ref{eqn:bc3}) together with Theorem \ref{thm:quantum-fpoly-rec}.
\end{proof}

Use the expression for $F_{\dd}$ given by (\ref{eqn:typeCrec3}).  By arguing as in the type $\mbox{D}_{n}$ case, one can show that all the terms in $F_{\dd}$ containing $Z_{p}$ are contained in one of the expansions of $q^{\lambda_{1}}F_{\dd'}Z^{\cc'}, q^{\lambda_{2}}F_{\dd''}Z^{\cc''}$, and all of the terms not containing $Z_{p}$ are contained in the other expansion.  By induction or by using the theorem in the already established cases above,  the coefficients of $F_{\dd'}$, $F_{\dd''}$ are of the form (\ref{typeBcoeff}).  Therefore, the same is true of the coefficients of $F_{\dd}$.  This completes the proof of the theorem for type $\mbox{C}_{n}$.

\section*{Acknowledgments}
The author would like to thank her advisor Andrei Zelevinsky for his many helpful comments and suggestions.

\end{document}